\newtheorem{theorem}{Theorem}[section]
\newtheorem{proposition}[theorem]{Proposition}
\theoremstyle{definition}
\newtheorem*{definition}{Definition}
\newtheorem*{example}{Example}
\newtheorem*{remark}{Remark}
\newcommand\blfootnote[1]{%
  \begingroup
  \renewcommand\thefootnote{}\footnote{#1}%
  \addtocounter{footnote}{-1}%
  \endgroup
}
\DeclareMathOperator{\rsets}{\mathrm{sets-}}
\DeclareMathOperator{\Gal}{\mathrm{Gal}}
\DeclareMathOperator{\Hom}{\mathrm{Hom}}
\DeclareMathOperator{\Aut}{\mathrm{Aut}}
\DeclareMathOperator{\Res}{\mathrm{Res}}
\DeclareMathOperator{\Qalg}{\mathbb{Q}} 
\DeclareMathOperator{\Ker}{\mathrm{Ker}}
\DeclareMathOperator{\Ver}{\mathrm{Ver}}
\newcommand*{\defeq}{\mathrel{\vcenter{\baselineskip0.5ex \lineskiplimit0pt
                     \hbox{\scriptsize.}\hbox{\scriptsize.}}}%
                     =}
\begin{document}
\title{A plectic Taniyama group}
\author{Chris Blake}

\address{ Department of Mathematics \\ 
   King's College London \\
   Strand \\
  London \\
   WC2R 2LS}
\email{chris.blake@kcl.ac.uk}

\maketitle
\begin{abstract} 
We revisit Langlands' construction of the Taniyama group to define a plectic Taniyama group for any totally real field \(F\). The construction is functorial in \(F\) and recovers Langlands' original Taniyama group when \(F = \mathbb{Q}\). We relate our construction to Nekov\'a\v{r}'s `hidden symmetries' in classical CM theory, generalising the relationship between the Taniyama group and Tate's half-transfer maps.
 \end{abstract}
\tableofcontents

\section{Introduction}

\blfootnote{2000 \emph{Mathematics Subject Classification} 11G15}
\blfootnote{This research was supported by the Engineering and Physical Sciences Research Council and the Heilbronn Institute for Mathematical Research.}

This article deals with new aspects of the theory of complex multiplication. In its most general form, the `main theorem of complex multiplication' \cite{Deligne} can be expressed as an equality between two affine group schemes over \(\mathbb{Q}\) (together with some additional data) - Deligne's motivic Galois group for the Tannakian category of potentially abelian motives over \(\mathbb{Q}\)  on the one hand and the Taniyama group constructed by Langlands \cite{Langlands} on the other. Recently Nekov\'a\v{r} and Scholl \cite{SN} have initiated the study of extra `plectic' symmetries which they conjecture should act on the cohomology of certain Shimura varieties. The main purpose of this article is to generalise Langlands' construction of the Taniyama group to the plectic setting. 
 
Throughout we let \(\overline{\mathbb{Q}}\) denote the algebraic closure of \(\mathbb{Q}\) in \(\mathbb{C}\) and \(c \in \Gamma_{\mathbb{Q}} = \Gal(\overline{\mathbb{Q}} / \mathbb{Q})  \) denote the image of complex conjugation. The Taniyama group consists of an extension
                      \[ 1 \rightarrow \mathcal{S} \rightarrow \mathcal{T} \rightarrow \Gamma_{\mathbb{Q}} \rightarrow 1 \]
of the absolute Galois group \(\Gamma_{\mathbb{Q}}\)   by the Serre group \(\mathcal{S}\), together with a continuous finite-adelic splitting \(s :  \Gamma_{\mathbb{Q}} \rightarrow  \mathcal{T}(\mathbb{A}_{\mathbb{Q}, f}) \). Here \(\mathcal{S}\) is a certain explicit pro-algebraic torus over \(\mathbb{Q}\) and \(\Gamma_{\mathbb{Q}}\) is viewed as a profinite constant group scheme over \(\mathbb{Q}\). In what follows let \(F \subset \mathbb{\overline{Q}} \) be a fixed totally real field and \(\Sigma_F = \Hom_{\Qalg}(F, \overline{\mathbb{Q}}) \) denote the set of all embeddings of \(F\) into \(\overline{\mathbb{Q}}\). In this article we ape Langlands' construction and define an extension of the form \[ 1 \rightarrow \mathcal{S}_F \rightarrow \mathcal{T}_F \rightarrow \Aut_F(F\otimes_{\mathbb{Q}} \overline{\mathbb{Q}}) \rightarrow 1 \]
  together with a continuous finite-adelic splitting \(s_F :  \Aut_F(F\otimes_{\mathbb{Q}}\overline{\mathbb{Q}})  \rightarrow  \mathcal{T}_F(\mathbb{A}_{\mathbb{Q}, f}) \).          
Here:
 \begin{enumerate}
\item   The profinite group \(\Aut_F(F\otimes_{\mathbb{Q}}\overline{\mathbb{Q}})\) is the \(F\)-plectic Galois group introduced by Nekov\'a\v{r} and Scholl. There is a canonical isomorphism between \(\Aut_F(F\otimes_{\mathbb{Q}}\overline{\mathbb{Q}})\) and the `plectic group' 
\[  \Gamma_{\mathbb{Q}} \# \Gamma_{F} \defeq \{ \alpha : \Gamma_{\mathbb{Q}} \xrightarrow{\sim} \Gamma_{\mathbb{Q}}  \text{ such that for all } g \in \Gamma_{\mathbb{Q}}, h\in \Gamma_F \text{ we have } \alpha(gh) = \alpha(g) h \} \] 
consisting of all the automorphisms of \(\Gamma_{\mathbb{Q}}\) as a right \(\Gamma_F\)-set. The natural inclusion \(\Gamma_{\mathbb{Q}} \hookrightarrow \Aut_{F}(F \otimes_{\mathbb{Q}} \overline{\mathbb{Q}})\) corresponds under this isomorphism to the inclusion \(\Gamma_{\mathbb{Q}} \hookrightarrow \Gamma_{\mathbb{Q}} \# \Gamma_{F}\)  given by the left regular action. 

\item   The pro-algebraic torus \(\mathcal{S}_F\) is just the product group \( \mathcal{S}^{\Sigma_F}\).  This plays the role of \(\mathcal{S}\) in the theory of \(F\)-plectic Hodge structures\footnote{For a totally real field \(F\), an \(F\)-plectic \(\mathbb{R}\)-Hodge structure (see \cite{SN}, \S16) is a finite-dimensional real vector space \(V\) together with a decomposition of the \(\mathbb{C}\)-vector space \(V_{\mathbb{C}}\) = \(V \otimes_{\mathbb{R}} \mathbb{C}\) of the form 
\[ V_{\mathbb{C}} = \bigoplus_{\underline{p},\underline{q} \in \mathbb{Z}^{\Sigma_F}} V^{\underline{p},\underline{q}}\]
such that \(\overline{V^{\underline{p}, \underline{q}}} = V^{\underline{q},\underline{p}}\).}. \end{enumerate} 
Thanks to (1) and (2), we call our extension the \(F\)-plectic Taniyama group. The construction is functorial in \(F\) and when \(F = \mathbb{Q}\) we recover Langlands' original Taniyama group. \\

\textbf{The classical theory.}  In order to explain our results in more detail we need to recall some more of the classical picture. This theory is explained very clearly, and in more detail, in the survey articles \cite{Fargues}, \cite{Milne} and \cite{Schappacher}. For any CM number field \(K \subset \overline{\mathbb{Q}}\) and CM type \(\Phi \subset \Sigma_K = \Hom_{\Qalg}(K, \overline{\mathbb{Q}})\), Tate wrote down half-transfer map \(F_{\Phi} : \Gamma_{\mathbb{Q}} \rightarrow \Gamma_{K}^{ab}\) which is now known to describe the action of \(\Gamma_{\mathbb{Q}}\) on the torsion points of a CM abelian variety of type \(\Phi\).  This is defined by first taking a set of coset representatives \(w_{\rho}\) for \( \Gamma_K \subset \Gamma_{\mathbb{Q}}\) such that \(w_{c \rho} = cw_{\rho}\) for all \(\rho \in \Sigma_K\)  and mapping \(g \in \Gamma_{\mathbb{Q}}\) to the element \[ F_{\Phi}(g) = \prod_{\rho \in \Phi} w_{g\rho}^{-1} g w_{\rho} \;\; \text{ mod } \Gamma_{K^{ab}} \]
of \(\Gamma_K^{ab}\), which is independent of the choice of coset representatives. There is a natural way to lift \(F_{\Phi}\) through the the Artin reciprocity map to a map \(f_{\Phi} :  \Gamma_{\mathbb{Q}} \rightarrow \mathbb{A}_{K,f}^{\times}/ K^{\times} \) called the associated Taniyama element. This is determined by requiring that for any \(g \in \Gamma_{\mathbb{Q}}\) we have \(f_{\Phi}(g)^{1+c} = \chi(g) K^{\times}  \) where \( \chi :  \Gamma_{\mathbb{Q}} \rightarrow \widehat{ \mathbb{Z}}^{\times}\)  denotes the cyclotomic character. 

As we now explain, Langlands' Taniyama group is built from a universal version of these Taniyama elements. To understand this, we need to review a few more of the details of Langlands' construction. First, for any CM number field \(E \subset \overline{\mathbb{Q}}\) the level \(E\) Serre group is the quotient of the \(\mathbb{Q}\)-rational torus \(T_E = \Res_{E/\mathbb{Q}}  \mathbb{G}_m \) with character group  \[ X^{\ast}(\mathcal{S}^E) = \{ \sum_{\sigma} n_{\sigma}[\sigma] \; : \; \text{there is some } w \in \mathbb{Z} \mbox{ such that } n_{\sigma} + n_{c\sigma} = w \text{ for all } \sigma \in \Hom_{\Qalg}(E, \overline{\mathbb{Q}}) \;   \}  \]
The identity embedding \(E \subset \overline{\mathbb{Q}}\) gives rise to a canonical cocharacter \(\mu^E\) of the torus \(T_E\) and its quotient \(\mathcal{S}^E\). The full Serre group \(\mathcal{S}\) featuring in Langlands' Taniyama group is then the projective limit
\(\mathcal{S} = \varprojlim_{E} \mathcal{S}^E \)
over all CM fields \(E \subset \overline{\mathbb{Q}}\) with respect to the algebraic norm maps. The Taniyama group is itself built from extensions of the form
                      \[ 1 \rightarrow \mathcal{S}^E \rightarrow \mathcal{T}^E \rightarrow \Gamma_{E^{ab}/ \mathbb{Q}}\rightarrow 1\] together with continuous finite-adelic splittings \(s^E :  \Gamma_{E^{ab}/ \mathbb{Q}} \rightarrow  \mathcal{T}^E(\mathbb{A}_{\mathbb{Q}, f}) \), where here \(E \subset \overline{\mathbb{Q}} \) is a CM number field which is Galois over \(\mathbb{Q}\).   At this finite level, such extensions are easily classified. Any such extension gives rise to and is completely determined by a map 
                      \[\overline{b}^E :     \Gamma_{E^{ab}/ \mathbb{Q}} \rightarrow \mathcal{S}^E(\mathbb{A}_{E,f})/\mathcal{S}^E(E) \]
and it is known precisely which such maps arise in this way (see \cite{Milne-Shih} Proposition 2.7).  

The data of Langlands' level \(E\) Taniyama extension is equivalent to the data of a certain explicit map \(f^E : \Gamma_{E^{ab}/\mathbb{Q}} \rightarrow \mathcal{S}^E(\mathbb{A}_{E,f})/\mathcal{S}^E(E) \) called the universal Taniyama element. Unsurprisingly, to define this map we need to make similar choices as we did for Tate's half-transfer maps. Let  \(W_{E/\mathbb{Q}, f}\) denote the finite-adelic version of the Weil group of the extension \(E / \mathbb{Q}\), which sits in a diagram
  \[ \begin{CD} 1 @>>> \mathbb{A}_{E,f}^{\times} / E^{\times} @ >>> W_{E/\mathbb{Q}, f} @>>> \Gamma_{E/\mathbb{Q}} @>>> 1 \\
  & & @Vart_EVV @V \varphi_f VV @V \text{ id } VV & \\
  1 @>>> \Gamma_E^{ab} @>>> \Gamma_{E^{ab} / \mathbb{Q}} @>>> \Gamma_{E/\mathbb{Q}}  @>>> 1 \end{CD} \]
and pick a lift \(\tilde{c} \in W_{E/\mathbb{Q},f}\) of complex conjugation and a set \(\{w_{\sigma}\}\) of coset representatives for \(\mathbb{A}_{E,f}^{\times}/E^{\times} \subset W_{E/\mathbb{Q}, f}\) such that \(w_{c\sigma} = \tilde{c}w_{\sigma}\) for all \(\sigma \in \Gamma_{E/\mathbb{Q}}\). The universal Taniyama element \(f^E : \Gamma_{E^{ab} / \mathbb{Q}} \rightarrow \mathcal{S}^E(\mathbb{A}_{E,f})/\mathcal{S}^E({E}) \) is defined by 
\[f^E(g) =  \prod_{\sigma \in \Gamma_{E/\mathbb{Q}}} (\sigma^{-1}\mu^E)(w_{g\sigma}^{-1}\tilde{g}w_{\sigma}) \in \mathcal{S}^E(\mathbb{A}_{E,f})/\mathcal{S}^E({E})  \]
where \(\mu^E \in \Hom_{E}({\mathbb{G}_{m}}_{/E}, \mathcal{S}^E_{/E})\) denotes the canonical cocharacter of \(\mathcal{S}^E\) and \(\tilde{g} \in W_{E/\mathbb{Q}, f}\) denotes any lift of \(g \in \Gamma_{E^{ab} / \mathbb{Q}}\). Said differently, if we define \(h_{\sigma}(\tilde{g}) = w_{g\sigma}^{-1}\tilde{g}w_{\sigma} \in \mathbb{A}_{E,f}^{\times}/E^{\times}\) and decompose \[{T}_{E} (\mathbb{A}_{E,f})  / {T}_{E} ({E})  = (E \otimes_{\mathbb{Q}} \mathbb{A}_{E,f})^{\times}/ \, (E \otimes_{\mathbb{Q}} E)^{\times}  \xrightarrow{\sim} \prod_{\sigma}  \mathbb{A}_{E,f}^{\times} / E^{\times} \] with \(\sigma \in \Gamma_{E/\mathbb{Q}} \) then \(f^E(g)\) is nothing more than the image of the element
 \[ h^E(\tilde{g}) \defeq (h_{\sigma^{-1}}(\tilde{g}))_{\sigma} \in \prod_{\sigma } \mathbb{A}_{E,f}^{\times} / E^{\times} \] 
 under the quotient mapping \(T_E \twoheadrightarrow \mathcal{S}^E\).  The element \(f^E(g)\) is well-defined independent of the choice of lift \(\tilde{g}\) or the choices of \(\tilde{c}\) and \(w_{\sigma}\). With our choices of normalisations (we normalise the Artin reciprocity map by insisting that local uniformisers get mapped to geometric Frobenius elements), the level \(E\) Taniyama extension is defined by the map \(\overline{b}^E(g) = f^E(g^{-1})^{-1} \).

 
The precise link between the universal Taniyama element and Tate's half-transfer maps is given by the Hodge theory of CM abelian varieties. Take \(K \subset \overline{\mathbb{Q}}\) a CM field and \(\Phi\) a CM type for \(K\). Let \(T_K = \Res_{K/\mathbb{Q}} \mathbb{G}_m\) and \(\mu_{\Phi}\) denote the cocharacter 
\[ \mu_{\Phi} = \sum_{\phi \in \Phi} [\phi] \in X_{\ast}(T_K)\]
which we can think of as defining the Hodge filtration on a CM abelian variety of type \((K, \Phi)\). Let \( E \subset \overline{\mathbb{Q}}\) be a CM number field which is Galois over \(\mathbb{Q}\) and contains the reflex field of \((K, \Phi)\). Then since \(\mu_{\Phi}\) is defined over \(E\) and has weight \(\mu_{\Phi}^{-(1+c)}\) defined over \(\mathbb{Q}\), there is a unique representation 
\[ \rho_{\Phi}:  \mathcal{S}^E \rightarrow T_K = \Res_{K/\mathbb{Q}} \mathbb{G}_m\] taking the canonical cocharacter \(\mu^E\) of \(\mathcal{S}^E\) to \(\mu_{\Phi}\). This representation is nothing more than the reflex norm of classical CM theory. The map \(f^E\) really is then a `universal Taniyama element' in the sense that whenever \((K, \Phi)\) is a CM type with reflex field contained in \(E\) we have \[\rho_{\Phi}( f^E(g)) =   f_{\Phi}(g) \in  \mathbb{A}_{K, f}/ K^{\times} \subset  T_{K}(\mathbb{A}_{E,f}) / T_K(E) \]
for all \(g\) in \(\Gamma_{\mathbb{Q}}\). In the case when \(E\) actually contains \(K\) we can identify \(\Sigma_K\) with \(\Hom_{\mathbb{Q}}(K,E)\) and decompose 
 \[{T}_{K} (\mathbb{A}_{E,f})  / {T}_{K} ({E}) = (K \otimes_{\mathbb{Q}} \mathbb{A}_{E,f})^{\times}/ (K \otimes_{\mathbb{Q}} E)^{\times}   \xrightarrow{\sim} \prod_{\rho \in \Sigma_K}  \mathbb{A}_{E,f}^{\times} / E^{\times} \] 
The above assertion is equivalent to saying that for any \(\rho : K \hookrightarrow E\) the \(\rho\) component of \(\rho_{\Phi}( f^E(g)) \in{T}_{K} (\mathbb{A}_{E,f})  / {T}_{K} ({E}) \) is equal to the image of \(f_{\Phi}(g) \in \mathbb{A}_{K,f}^{\times}/K^{\times}\) under the map \( \mathbb{A}_{K,f}^{\times}/K^{\times} \rightarrow \mathbb{A}_{E,f}^{\times}/E^{\times}\) induced by \(\rho\). \\

 \textbf{This article.}  From now on we fix a totally real number field \(F \subset \overline{\mathbb{Q}} \) and let \(\Sigma_F = \Hom_{\Qalg}(F, \overline{\mathbb{Q}})\).  In what follows we recover the classical theory when we take \(F\) to be \(\mathbb{Q}\).  Our construction of the \(F\)-plectic Taniyama group is inspired by Nekov\'a\v{r}'s paper \cite{Nekovar}\footnote{It should be pointed out explicitly that Nekov\'a\v{r} reinterprets his results terms of a generalised Taniyama group which is somewhat different to the plectic Taniyama group defined by Theorem 1.2. Our approach, which is motivated by plectic Hodge theory and more closely mirrors Langlands' original construction, is inspired by Nekov\'a\v{r}'s  work but proceeds in a different direction.}, where it is shown that if \(K \subset \overline{\mathbb{Q}}\) is any CM field with totally real subfield \(F\) then Tate's half-transfer map attached to a CM type \(\Phi\) of \(K\) canonically extends to a map 
 \[ \widetilde{F_{\Phi}} : \Aut_F(F\otimes_{\mathbb{Q}} \overline{\mathbb{Q}}) \rightarrow \Gamma_{K}^{ab} \] 
This can now be viewed as evidence for the full plectic conjecture of Nekov\'a\v{r} and Scholl. The first main contribution of this article is to reinterpret Nekov\'a\v{r}'s half-transfer maps along the lines of Tate's original definition. 
 
 \begin{theorem} Let \(w_{\rho}\) be a set of coset representatives for \( \Gamma_K \subset \Gamma_{\mathbb{Q}}\) such that \(w_{c \rho} = cw_{\rho}\) for all \(\rho \in \Sigma_K\). Then Nekov\'a\v{r}'s half-transfer map \(\widetilde{F_{\Phi}}\) sends any \(g \in \Aut_F(F\otimes_{\mathbb{Q}} \overline{\mathbb{Q}})\) to the element 
\[\widetilde{F_{\Phi}}(g) = \prod_{\phi \in \Phi} w_{\alpha_g \rho}^{-1} \alpha_g(w_{\rho})   \;\; \text{ mod } \Gamma_{K^{ab}} \]
of \(\Gamma_K^{ab}\) 
 where \(\alpha_g \in \Gamma_{\mathbb{Q}} \# \Gamma_{F} \subset \Gamma_{\mathbb{Q}} \# \Gamma_{K} \) is to the image of \(g\) under the canonical isomorphism \(\Aut_F(F\otimes_{\mathbb{Q}} \overline{\mathbb{Q}}) \xrightarrow{\sim} \Gamma_{\mathbb{Q}} \# \Gamma_{F}\). 
  \end{theorem}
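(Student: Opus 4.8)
The plan is to extract this formula from Nekov\'a\v{r}'s definition of \(\widetilde{F_\Phi}\), exactly as Tate's formula for \(F_\Phi\) is extracted from the cocycle description of the Galois action on the torsion of a CM abelian variety. Note first that the asserted expression is visibly meaningful for a general right \(\Gamma_F\)-set automorphism \(\alpha_g\), and that it manifestly reduces to the classical statement on the subgroup \(\Gamma_{\mathbb{Q}}\hookrightarrow\Aut_F(F\otimes_{\mathbb{Q}}\overline{\mathbb{Q}})\): there \(\alpha_g\) is left translation by \(g\), so \(w_{\alpha_g\rho}^{-1}\alpha_g(w_\rho)=w_{g\rho}^{-1}gw_\rho\) and the product is \(F_\Phi(g)\). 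Hence the real content is that this expression continues to compute Nekov\'a\v{r}'s extension.

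Before comparing the two maps I would check that the right-hand side is well defined, independently of the chosen \(\{w_\rho\}\); this is the exact analogue of the well-definedness of \(F_\Phi\). Two admissible systems differ by \(w'_\rho=w_\rho\kappa_\rho\) with \(\kappa_\rho\in\Gamma_K\), and \(w_{c\rho}=cw_\rho\) forces \(\kappa_{c\rho}=\kappa_\rho\), so \(\kappa_\rho\) depends only on the image of \(\rho\) under \(\Sigma_K\to\Sigma_F\). Because \(\alpha_g\) is right \(\Gamma_F\)-equivariant it descends to a permutation of \(\Sigma_K=\Gamma_{\mathbb{Q}}/\Gamma_K\) lying over a permutation of \(\Sigma_F=\Gamma_{\mathbb{Q}}/\Gamma_F\); consequently \(w_{\alpha_g\rho}^{-1}\alpha_g(w_\rho)\in\Gamma_K\) (since \(\alpha_g(w_\rho)\Gamma_K=\alpha_g\rho=w_{\alpha_g\rho}\Gamma_K\)) and \(\alpha_g\) sends \(\Phi\) to another transversal of \(\Sigma_K\to\Sigma_F\). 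Each factor then changes by \(\kappa_{\alpha_g\rho}^{-1}(\,\cdot\,)\kappa_\rho\), and modulo \(\Gamma_{K^{ab}}\) the net change is the class of \(\bigl(\prod_{\phi\in\Phi}\kappa_{\alpha_g\phi}\bigr)^{-1}\bigl(\prod_{\phi\in\Phi}\kappa_\phi\bigr)\), which vanishes because \(\kappa\) is constant on fibres of \(\Sigma_K\to\Sigma_F\) and both \(\Phi\) and \(\alpha_g\Phi\) are transversals, so the two products coincide in the abelian group \(\Gamma_K^{ab}\). The same observations, together with continuity of \(\alpha\mapsto(\alpha(w_\rho))_\rho\) and \(\alpha\mapsto(\alpha\rho)_\rho\), show that the formula defines a continuous map \(\Aut_F(F\otimes_{\mathbb{Q}}\overline{\mathbb{Q}})\to\Gamma_K^{ab}\).

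The main step, which I expect to be the principal obstacle, is to match this map with Nekov\'a\v{r}'s \(\widetilde{F_\Phi}\). I would unwind Nekov\'a\v{r}'s construction and re-express each ingredient in terms of the \(w_\rho\), replacing throughout the classical derivation every product \(w_{g\rho}^{-1}gw_\rho\) by \(w_{\alpha_g\rho}^{-1}\alpha_g(w_\rho)\), and verifying that the group-theoretic identities behind the construction survive this substitution: the relevant cocycle relation (now for the left action of \(\Aut_F(F\otimes_{\mathbb{Q}}\overline{\mathbb{Q}})\) on \(\Gamma_{\mathbb{Q}}\)), independence of the lift of \(g\), the reduction modulo \(\Gamma_{K^{ab}}\), and the passage along \(\Gamma_{\mathbb{Q}}\#\Gamma_F\subset\Gamma_{\mathbb{Q}}\#\Gamma_K\) that restricts a right \(\Gamma_F\)-set automorphism to a right \(\Gamma_K\)-set automorphism. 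The genuinely delicate points are bookkeeping: reconciling Nekov\'a\v{r}'s normalisations with those fixed here (the direction of the various actions, and the normalisation of Artin reciprocity that is implicit when one later compares with the Taniyama element), and checking that \(\alpha_g\Phi\) need not equal \(\Phi\) as an ordered set but that this is immaterial after projecting to \(\Gamma_K^{ab}\). An alternative to unwinding Nekov\'a\v{r}'s definition directly would be to pin \(\widetilde{F_\Phi}\) down by continuity together with its restriction to \(\Gamma_{\mathbb{Q}}\) and its values on the ``hidden symmetries'' \(c_\sigma\) (\(\sigma\in\Sigma_F\)), the latter being an elementary computation using only \(w_{c\rho}=cw_\rho\); this requires knowing that \(\Gamma_{\mathbb{Q}}\) and the \(c_\sigma\) control \(\Aut_F(F\otimes_{\mathbb{Q}}\overline{\mathbb{Q}})\), so the first route is probably cleaner.
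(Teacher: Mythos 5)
Your preliminary observations and your well-definedness argument are correct, and the latter is precisely the paper's Proposition 3.2: two admissible systems differ by $w_\rho' = w_\rho e_\rho$ with $e_{c\rho}=e_\rho$, so $e_\rho$ depends only on $\rho|_F$, and since $\alpha_g \in \Gamma_{\mathbb{Q}}\#\Gamma_F$ carries the CM type $\Phi$ to another CM type, the two products of $e$'s cancel in $\Gamma_K^{ab}$. Up to this point you are in step with the paper.

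The heart of the theorem, however --- the identification of the displayed product with Nekov\'a\v{r}'s map --- is left as a plan rather than a proof, and the plan points at the wrong difficulties. There is no cocycle description or Artin-reciprocity normalisation to reconcile here: Nekov\'a\v{r} \emph{defines} $\widetilde{F_\Phi}$ by an explicit closed formula in wreath-product coordinates (Proposition 3.1 of the paper). After choosing a section $s=(s_x)_{x\in X}$ of $\Gamma_{\mathbb{Q}}\twoheadrightarrow\Sigma_F$, writing $\beta_s(g)=(\pi,(h_x)_{x\in X})$ and $\Phi=\{\phi_x=c^{a_x}s_x|_K\}$, that formula reads
\[ \widetilde{F_\Phi}(g)=\prod_{x\in X} s_{\pi(x)}^{-1}c^{a_x+\overline{h_x}}s_{\pi(x)}\,h_x\,s_x^{-1}c^{a_x}s_x \;\; \text{ mod } \Gamma_{K^{ab}}. \]
So the remaining task is a direct finite computation: take the particular admissible representatives $w_{c^b s_x|_K}=c^b s_x$ (legitimate by your well-definedness step) and verify factorwise that $w_{\alpha_g\phi_x}^{-1}\alpha_g(w_{\phi_x})$ equals the $x$-th factor above. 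This rests on two identities, $\alpha_g(w_{\phi_x})=s_{\pi(x)}h_xs_x^{-1}c^{a_x}s_x$ and $w_{\alpha_g\phi_x}=c^{a_x+\overline{h_x}}s_{\pi(x)}$. The first is the one genuinely nontrivial move and your proposal never identifies it: one must observe that $s_x^{-1}c^{a_x}s_x$ lies in $\Gamma_F$ (because $K$ is CM with $F=K^+$, so $s_x^{-1}c^{a_x}s_x|_K=c^{a_x}|_K$ restricts trivially to $F$), whence right $\Gamma_F$-equivariance gives $\alpha_g(c^{a_x}s_x)=\alpha_g(s_x)\cdot s_x^{-1}c^{a_x}s_x=s_{\pi(x)}h_x\cdot s_x^{-1}c^{a_x}s_x$; the second identity then follows by restricting this to $K$. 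Without this step the comparison does not go through. (Your fallback route via continuity, the restriction to $\Gamma_{\mathbb{Q}}$ and the elements $c_\sigma$ would indeed founder: these do not topologically generate $\Gamma_{\mathbb{Q}}\#\Gamma_F\cong S_X\ltimes\Gamma_F^X$, as you half-suspect.)
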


In the remainder of this paper we take Theorem 1.1 as our starting point and revisit Langlands' original definition to construct an \(F\)-plectic Taniyama group for any totally real number field \(F\). This takes the form of an extension                     \[ 1 \rightarrow \mathcal{S}_F \rightarrow \mathcal{T}_F \rightarrow \Aut(F \otimes_{\mathbb{Q}} \overline{\mathbb{Q}}) \rightarrow 1 \]
of the \(F\)-plectic absolute Galois group \(\Aut(F \otimes_{\mathbb{Q}} \overline{\mathbb{Q}})\) by the \(F\)-plectic Serre group \(\mathcal{S}_F\), together with a continuous finite-adelic splitting \(s_F :  \Aut(F \otimes_{\mathbb{Q}} \overline{\mathbb{Q}}) \rightarrow  \mathcal{T}_F(\mathbb{A}_{\mathbb{Q}, f}) \). Here \(\mathcal{S}_F\) denotes the \(F\)-plectic Serre group \(\mathcal{S}^{\Sigma_F}\), which plays the role of \(\mathcal{S}\) in the theory of \(F\)-plectic Hodge structures. The group \(\mathcal{T}_F\) is built from extensions of the form
                      \[ 1 \rightarrow \mathcal{S}_F^E \rightarrow \mathcal{T}_F^E \rightarrow \Aut_F(F\otimes_{\mathbb{Q}} E^{ab}) \rightarrow 1\] together with continuous finite-adelic splittings \(s_F^E :  \Aut_F(F\otimes_{\mathbb{Q}} E^{ab}) \rightarrow  \mathcal{T}_F^E(\mathbb{A}_{\mathbb{Q}, f}) \). Here \(E \subset \overline{\mathbb{Q}}\) is a CM field containing \(F\) and Galois over \(\mathbb{Q}\), while \(\mathcal{S}_F^E\) denotes the torus \((\mathcal{S}^E)^{\Sigma_F}\). Any such extension gives rise to and is completely determined by a map 
                      \[\overline{b}_F^E :     \Aut_F(F\otimes_{\mathbb{Q}} E^{ab})\rightarrow \mathcal{S}_F^E(\mathbb{A}_{E,f})/\mathcal{S}_F^E(E) \]
and it is possible (see Proposition 6.1 for a precise statement) to write down certain conditions which, when satisfied, guarantee that such a map does indeed come from an extension of the above form. 

For \(E\) as above let \(W_{E/\mathbb{Q}, f}\) denote the finite-adelic version of the Weil group of the extension \(E / \mathbb{Q}\). As with Langlands' construction, we pick a lift \(\tilde{c} \in W_{E/\mathbb{Q},f}\) of complex conjugation and a set \(\{w_{\sigma}\}\) of coset representatives for \(\mathbb{A}_{E,f}^{\times}/E^{\times} \subset W_{E/\mathbb{Q}, f}\) such that \(w_{c\sigma} = \tilde{c}w_{\sigma}\) for all \(\sigma \in \Gamma_{E/\mathbb{Q}}\)\footnote{This is very slight simplification of the choices involved. See \S5 for a more detailed exposition of the construction.
}.
  An embedding \(j : F \hookrightarrow E\) induces a natural map \( j : \Aut_F(F\otimes E^{ab}) \hookrightarrow \Aut_E(E \otimes E^{ab})\) and hence a map \(j:  \Gamma_{E^{ab}/\mathbb{Q}} \# \Gamma_{E^{ab}/F}  \rightarrow \Gamma_{E^{ab}/\mathbb{Q}} \# \Gamma_{E^{ab}/ E }\). This can be expressed in purely group theoretic terms, and as a result we have a similar map on the level of Weil groups. We view the torus \(\mathcal{S}_F^E = (\mathcal{S}^E)^{\Sigma_F}\) as a quotient of \(T_{F \otimes_{\mathbb{Q}} E} \xrightarrow{\sim} T_E^{\Sigma_F}\) and can decompose \[{T}_{F \otimes_{\mathbb{Q}} E} (\mathbb{A}_{E,f})  / {T}_{F \otimes_{\mathbb{Q}} E}({E})  = (F \otimes_{\mathbb{Q}} E \otimes_{\mathbb{Q}} \mathbb{A}_{E,f})^{\times}/ \, (F \otimes_{\mathbb{Q}} E \otimes_{\mathbb{Q}} E)^{\times}  \xrightarrow{\sim} \prod_{ j \otimes \sigma}  \mathbb{A}_{E,f}^{\times} / E^{\times} \] where \(j \otimes \sigma\) denotes the component on which \(F\) acts by \(j \in \Sigma_F\) and \(E\) acts by \(\sigma \in \Gamma_{E/\mathbb{Q}} \). The key construction in this paper is that of a particular map \[f_F^E : \Aut_F(F\otimes E^{ab}) \rightarrow \mathcal{S}_F^E(\mathbb{A}_{E,f})/\mathcal{S}_F^E({E}) \]  which we call the  \(F\)-plectic universal Taniyama element. This is defined by sending \(g \in \Aut_F(F\otimes E^{ab})\) to the image of the element
\[ h_F^E( \tilde{\alpha}_g) \defeq (h_{\sigma^{-1}}(j(\tilde{\alpha}_g)))_{j \otimes \sigma} \defeq (w_{ j(\tilde{\alpha}_g)\sigma^{-1}}^{-1} j(\tilde{\alpha}_g)(w_{\sigma^{-1}}))_{j \otimes \sigma}    \in   {T}_{F\otimes E} (\mathbb{A}_{E,f})  / {T}_{F\otimes E} ({E})   \]
under \(T_{F \otimes E} \twoheadrightarrow \mathcal{S}_F^E\), where \(\tilde{\alpha}_g\) is any lift of \(\alpha_g\) through the map \(W_{E/\mathbb{Q} , f} \# W_{E/F , f} \twoheadrightarrow \Gamma_{E^{ab}/\mathbb{Q}} \# \Gamma_{E^{ab}/ F}\). To make it clear, here each \(j(\tilde{\alpha}_g)\) is a element of the group \[W_{E/\mathbb{Q} , f} \# \mathbb{A}_{E,f}^{\times}/E^{\times} = \{ \alpha : W_{E/\mathbb{Q} , f} \xrightarrow{\sim}W_{E/\mathbb{Q} , f} \text{ s.t. for all } w \in W_{E/\mathbb{Q} , f}, e \in \mathbb{A}_{E,f}^{\times}/E^{\times} \text{ we have } \alpha(w{e}) = \alpha(w) {e} \}\] 
Any such element induces a permutation \(\sigma \mapsto j(\tilde{\alpha}_g) \sigma\) of the quotient \(\Gamma_{E/\mathbb{Q}}\) such that \(j(\tilde{\alpha}_g)(w_{\sigma^{-1}})\) lies in the same coset as \(w_{ j(\tilde{\alpha}_g) \sigma^{-1}}\) for all \(\sigma \in \Gamma_{E/\mathbb{Q}}\). The difference \(w_{ j(\tilde{\alpha}_g)\sigma^{-1}}^{-1} j(\tilde{\alpha}_g)(w_{\sigma^{-1}})\) is then an element of \(\mathbb{A}_{E,f}^{\times}/E^{\times}\). The element \[ h_F^E( \tilde{\alpha}_g) \in {T}_{F\otimes E} (\mathbb{A}_{E,f})  / {T}_{F\otimes E} ({E})\]
simply stores this collection of elements of  \(\mathbb{A}_{E,f}^{\times}/E^{\times}\) for each \(j \in \Sigma_F\) and \(\sigma \in \Gamma_{E/\mathbb{Q}} \) in an appropriate way. The content is that this becomes well-defined independent of our choices when we pass to the quotient \({T}_{F\otimes E}  \twoheadrightarrow \mathcal{S}_F^E\).  What's more, this
\(F\)-plectic universal Taniyama element  goes on to satisfy the conditions of Proposition 6.1 and so defines an extension that we call the \(F\)-plectic Taniyama group. 

\begin{theorem} For any totally real field \(F\) and CM number field \(E \subset \overline{\mathbb{Q}}\) which splits \(F\):
\begin{enumerate}

\item For each \(g \in \Aut_F(F\otimes E^{ab})\) the element \(f_F^E(g)\) is well-defined independent of the choice of lift \(\tilde{\alpha}_g\) and the choices of \(\tilde{c}\) and of coset representatives \(w_{\sigma}\) for \(\sigma \in \Gamma_{E/\mathbb{Q}}\). 

\item There is a unique extension \[ 1 \rightarrow \mathcal{S}_F^E \rightarrow \mathcal{T}_F^E \rightarrow  \Aut_F(F\otimes E^{ab})  \rightarrow 1\]
together with a continuous finite-adelic splitting \(s^E_F :  \Aut_F(F\otimes E^{ab}) \rightarrow \mathcal{T}_F^E(\mathbb{A}_{\mathbb{Q}, f}) \) giving rise to the map \(\overline{b}_F^E : \Aut_F(F\otimes_{\mathbb{Q}} E^{ab})  \rightarrow \mathcal{S}_F^E(\mathbb{A}_{E,f})/S_F^E(E) \) defined by \(g \mapsto f_F^E(g^{-1})^{-1}\). 

\item This family of extensions pieces together produce an extension of the form \[ 1 \rightarrow \mathcal{S}_F \rightarrow \mathcal{T}_F \rightarrow \Aut_F(F\otimes_{\mathbb{Q}} \overline{\mathbb{Q}}) \rightarrow 1  \]
  together with a continuous finite-adelic splitting \(s_F :  \Aut_F(F\otimes_{\mathbb{Q}}\overline{\mathbb{Q}})  \rightarrow  \mathcal{T}_F(\mathbb{A}_{\mathbb{Q}, f}) \).  \end{enumerate}

\end{theorem}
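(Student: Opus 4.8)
The plan is to follow Langlands' original strategy \cite{Langlands}, treating the three parts in turn; part (1) carries the real content, and parts (2) and (3) then follow formally from (1), Proposition 6.1 and the functoriality of the construction in the auxiliary field $E$.

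For part (1): since $\mathcal{S}_F^E = (\mathcal{S}^E)^{\Sigma_F}$ and the quotient $T_{F\otimes E} \xrightarrow{\sim} T_E^{\Sigma_F} \twoheadrightarrow (\mathcal{S}^E)^{\Sigma_F}$ respects this product decomposition, it suffices to fix one $j \in \Sigma_F$ and show that the image in $\mathcal{S}^E(\mathbb{A}_{E,f})/\mathcal{S}^E(E)$ of the tuple $(h_{\sigma^{-1}}(j(\tilde\alpha_g)))_{\sigma} \in \prod_{\sigma} \mathbb{A}_{E,f}^{\times}/E^{\times} = T_E(\mathbb{A}_{E,f})/T_E(E)$ is unchanged when we vary the lift $\tilde\alpha_g$, the lift $\tilde c$ and the coset representatives $\{w_\sigma\}$. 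The key point is that $j(\tilde\alpha_g)$ enters only through its action on $W_{E/\mathbb{Q},f}$ as an automorphism of right $\mathbb{A}_{E,f}^{\times}/E^{\times}$-sets, so that, working at each $\sigma$ separately, the computation is formally the same as the verification in \cite{Langlands}, \cite{Milne-Shih} that the classical universal Taniyama element $f^E$ is well-defined, with the element $\tilde g \in W_{E/\mathbb{Q},f}$ replaced throughout by the module automorphism $j(\tilde\alpha_g)$ and ordinary conjugation replaced by the corresponding operation on automorphisms. Concretely, replacing $\{w_\sigma\}$ by $\{w_\sigma t_\sigma\}$ and $\tilde c$ by $\tilde c\, u$, the compatibility $w_{c\sigma}=\tilde c w_\sigma$ forces a relation of the shape $t_{c\sigma}\equiv {}^{c}(t_\sigma)$, and one checks that the resulting change in $(h_{\sigma^{-1}}(j(\tilde\alpha_g)))_\sigma$ is a product of elements in the kernel of $T_E \twoheadrightarrow \mathcal{S}^E$; this is precisely where the defining weight condition on $X^{\ast}(\mathcal{S}^E)$ (namely that $n_\sigma + n_{c\sigma}$ be independent of $\sigma$) is used to annihilate the discrepancy. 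Independence of the lift $\tilde\alpha_g$ is proved exactly as in the classical case, working componentwise. (Alternatively one could deduce (1) from Theorem 1.1 together with the fact that the character group of $\mathcal{S}^E$ is generated by the reflex norms $\rho_\Phi$ of CM types with reflex field in $E$, pairing $f_F^E$ with each $\rho_\Phi$ and invoking the well-definedness of Nekov\'a\v{r}'s half-transfer map; but the direct computation is cleaner and keeps track of the adelic refinement.)

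For part (2): by Proposition 6.1 it is enough to verify that $\overline{b}_F^E(g) = f_F^E(g^{-1})^{-1}$ meets the conditions stated there. The first is that $f_F^E$ is a continuous crossed homomorphism from $\Aut_F(F\otimes E^{ab})$ to $\mathcal{S}_F^E(\mathbb{A}_{E,f})/\mathcal{S}_F^E(E)$ for the action of $\Aut_F(F\otimes E^{ab})$ through its quotient $\Gamma_{E/\mathbb{Q}}$; this is a telescoping identity in the $w_\sigma$, using that a composite lift $\widetilde{\alpha_{g_1}}\widetilde{\alpha_{g_2}}$ produces $h_{\sigma^{-1}}(j(\widetilde{\alpha_{g_1 g_2}}))$ as the appropriate product of $h_{\bullet}(j(\widetilde{\alpha_{g_1}}))$ and $h_{\bullet}(j(\widetilde{\alpha_{g_2}}))$. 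The second is the normalisation identity, the analogue of $f_\Phi(g)^{1+c} = \chi(g)$, which holds because the weight of the canonical cocharacter $\mu^E$ is defined over $\mathbb{Q}$, and which is exactly what forces the extension to be by $\mathcal{S}_F^E$ and not by some larger torus. Proposition 6.1 then yields the unique pair $(\mathcal{T}_F^E, s_F^E)$. For part (3): for CM fields $E \subset E'$ that are Galois over $\mathbb{Q}$ and contain $F$, one checks that the algebraic norm $\mathcal{S}_F^{E'}\to\mathcal{S}_F^E$ and the restriction $\Aut_F(F\otimes (E')^{ab}) \to \Aut_F(F\otimes E^{ab})$ intertwine $f_F^{E'}$ with $f_F^E$; componentwise over $\Sigma_F$ this is the classical compatibility of universal Taniyama elements, and it only requires choosing the data $\{w_\sigma\}$, $\tilde c$ compatibly across levels (allowed by part (1)) together with compatibility of the Weil-group transition maps. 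The pairs $(\mathcal{T}_F^E, s_F^E)$ then form an inverse system, and passing to the limit — using $\varprojlim_E \mathcal{S}_F^E = (\varprojlim_E \mathcal{S}^E)^{\Sigma_F} = \mathcal{S}^{\Sigma_F} = \mathcal{S}_F$ and $\varprojlim_E \Aut_F(F\otimes E^{ab}) = \Aut_F(F\otimes\overline{\mathbb{Q}})$, the latter because $\bigcup_E E^{ab} = \overline{\mathbb{Q}}$ since every number field embeds in some such $E$ — produces the extension $1 \to \mathcal{S}_F \to \mathcal{T}_F \to \Aut_F(F\otimes\overline{\mathbb{Q}}) \to 1$, with the compatible splittings assembling into $s_F$.

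The main obstacle is part (1). The choices $\tilde\alpha_g$, $\tilde c$ and the $w_\sigma$ interact and must be varied simultaneously, and the new feature relative to Langlands' setting is that $j(\tilde\alpha_g)$ is an automorphism of the Weil group $W_{E/\mathbb{Q},f}$ rather than an element of it, so each step of the classical cancellation must be reread as a statement about automorphisms of right $\mathbb{A}_{E,f}^{\times}/E^{\times}$-modules. The map $h_F^E$ has been set up precisely so that only this module structure intervenes, but checking this carefully — and confirming that every resulting discrepancy lands in the kernel of $T_{F\otimes E} \twoheadrightarrow \mathcal{S}_F^E$ — is where the work lies; parts (2) and (3) are then bookkeeping built on top of part (1), Proposition 6.1 and the formalism of inverse limits.
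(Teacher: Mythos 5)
There are genuine gaps, one in each of parts (1) and (2).

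In part (1), your opening reduction conflates two different product decompositions. The factor of \(\mathcal{S}_F^E=(\mathcal{S}^E)^{\Sigma_F}\) indexed by \(j\) corresponds, under \(T_{F\otimes E}\xrightarrow{\sim}T_E^{\Sigma_F}\), not to the column \(\{\,j\otimes\sigma:\sigma\in\Gamma_{E/\mathbb{Q}}\,\}\) of coordinates of \(T_{F\otimes E}\) but to the twisted set \(\{\,(\sigma\circ j)\otimes\sigma:\sigma\,\}\): the paper identifies \([j,\sigma]\) with \([\sigma\circ j\otimes\sigma]\), so the \(j\)-th component of \(h_F^E(\tilde{\alpha}_g)\) in \(T_E(\mathbb{A}_{E,f})/T_E(E)\) is \((h_{\sigma^{-1}}((\sigma\circ j)(\tilde{\alpha}_g)))_{\sigma}\), in which the automorphism varies with \(\sigma\) — it is not \((h_{\sigma^{-1}}(j(\tilde{\alpha}_g)))_{\sigma}\) for a single fixed embedding, and the classical computation ``with \(\tilde g\) replaced by \(j(\tilde{\alpha}_g)\)'' does not apply to it. Nor does column-by-column well-definedness give what is needed: the characters of \(\mathcal{S}_F^E\) one must test (the pullback of a CM-type character through a single projection \((\mathcal{S}^E)^{\Sigma_F}\twoheadrightarrow\mathcal{S}^E\)) restrict to each column \(j\otimes-\) with non-constant weight, and under \(w_{\sigma}\mapsto w_{\sigma}e_{\sigma}\) the discrepancy within one column does not vanish; it cancels only after multiplying over all the embeddings \(j_r=\sigma_r\circ j_0\) occurring in that character. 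This cross-column cancellation is precisely what Proposition 4.1 and the summation \(b_{\sigma}=\sum_j b_{j\otimes\sigma}\) in the paper's Proposition 5.2 accomplish. (The same caveat defeats your parenthetical alternative: \(\rho_{\Phi,F}\) does not factor through \(\mathcal{S}_F^E\) and \(\rho_{\Phi,F}(h_F^E(\tilde{\alpha}_g))\) genuinely depends on the choices.) Finally, independence of the lift \(\tilde{\alpha}_g\) is not an algebraic cancellation at all: it requires first proving Galois-invariance of the image (Proposition 5.3), so that \(f_F^E\) lands in the totally disconnected space \(\left[\mathcal{S}_F^E(\mathbb{A}_{E,f})/\mathcal{S}_F^E(E)\right]^{\Gal(E/\mathbb{Q})}\), and then checking that the kernel of \(W_{E/\mathbb{Q},f}\# W_{E/F,f}\twoheadrightarrow\Gamma_{E^{ab}/\mathbb{Q}}\#\Gamma_{E^{ab}/F}\) is connected; your sketch acknowledges neither step.

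In part (2) you have misidentified the conditions of Proposition 6.1. They are: the cocycle identity (which you address), Galois-invariance of \(\overline{b}_F^E\), and the existence of a continuous lift \(b_F^E:\Aut_F(F\otimes E^{ab})\rightarrow\mathcal{S}_F^E(\mathbb{A}_{E,f})\) whose coboundary \(d_{g_1,g_2}\) is locally constant with values in \(\mathcal{S}_F^E(E)\). There is no ``normalisation identity'' analogous to \(f_{\Phi}(g)^{1+c}=\chi(g)\) among them, and the third condition — the genuinely delicate one — is absent from your proposal; the paper's Proposition 6.4 handles it by importing from Milne--Shih a finite abelian extension \(E^{\prime}/E\) and a continuous homomorphism \(b:\Gamma_{E^{ab}/E^{\prime}}\rightarrow\mathcal{S}^E(\mathbb{A}_{\mathbb{Q},f})\) lifting \(N^E\), forming the \(\Sigma_F\)-fold product on \(\Gamma_{E^{ab}/E^{\prime}}^{\Sigma_F}\) and extending by coset representatives. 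Your part (3) is fine in outline and agrees with the paper (Proposition 5.6, then pullback along \(\Aut_F(F\otimes\overline{\mathbb{Q}})\twoheadrightarrow\Aut_F(F\otimes E^{ab})\) and passage to the limit).
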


We end by making clear the relationship between our \(F\)-plectic Taniyama group and Nekov\'a\v{r}'s extended half-transfer maps.    Let \(K\) be a CM field with maximal totally real subfield \(F\). Then given a CM type \(\Phi\) on \(K\), we can write \(\Phi = \{ \Phi_j : j \in \Sigma_F\}\) where \(\phi_j\) is the unique element of \(\Phi\) such that \(\phi_j|_F = j\). We can think of the cocharacter \(\mu_{\Phi}  = \sum_{\phi \in \Phi} [\phi]\) as the restriction to the diagonal of the homomorphism \[  \mu_{\Phi, F} = (\mu_{j})_{j\in \Sigma_F} : \mathbb{G}_{m, \mathbb{C}}^{\Sigma_F} \rightarrow T_{K, \mathbb{C}} \] 
where for each \(j \in \Sigma_F\) we have \(\mu_{j} = [\phi_j] \in X_{\ast}(T_K)\). This can be thought of as using the real multiplication by \(F\) to define an \(F\)-plectic Hodge structure on a CM abelian variety of type \((K, \Phi)\). If \(E \subset \overline{\mathbb{Q}}\) is a CM number field containing \(K\) and Galois over \(\mathbb{Q}\) then there is a unique representation \(\rho_j : T_E \rightarrow T_K\) taking the canonical cocharacter \(\mu^E\) of \(T_E\) to \(\mu_j\). These piece together to give a representation \[\rho_{\Phi, F} = (\rho_j)_j: T_{F\otimes_\mathbb{Q}E} \xrightarrow{\sim} T_E^{\Sigma_F} \rightarrow T_K\] whose restriction to the diagonal is the classical reflex norm. It important to note that \(\rho_{\Phi, F}\) itself need not factor through the quotient \(T_{F\otimes_\mathbb{Q}E} \twoheadrightarrow \mathcal{S}_F^E\). Now the universal Taniyama element \(f_F^E\) was constructed by looking at the image of 
\[ h_F^E(\tilde{\alpha}_g) \defeq (h_{\sigma^{-1}}(j(\tilde{\alpha}_g)))_{j \otimes \sigma} = (w_{ j(\tilde{\alpha}_g)\sigma^{-1}}^{-1} j(\tilde{\alpha}_g)(w_{\sigma^{-1}}))_{j \otimes \sigma}    \in   {T}_{F\otimes E} (\mathbb{A}_{E,f})  / {T}_{F\otimes E} ({E})   \]
under the map \(T_{F \otimes E} \twoheadrightarrow \mathcal{S}_F^E\). If instead of mapping down to this quotient, we instead look at the image of this element under \(\rho_{\Phi, F}\), we obtain an element
\[ \rho_{\Phi, F}(h_F^E(\tilde{\alpha}_g)) \in  {T}_{K} (\mathbb{A}_{E,f})  / {T}_{K} ({E}) = (K \otimes_{\mathbb{Q}} \mathbb{A}_{E,f}^{\times} )/ (K \otimes_{\mathbb{Q}} E)^{\times}  \xrightarrow{\sim} \prod_{\rho : K \hookrightarrow E} \mathbb{A}_{E,f}^{\times}/E^{\times}   \] 
In general this element may depend both on the choice of lift \(\tilde{\alpha}_g\) of \(\alpha_g\) and possibly also on the choices we made in the construction. Regardless, it follows from Theorem 1.1 that for any \(\rho: K \hookrightarrow E\) the image of the \(\rho\) component of \(\rho_{\Phi, F}(h_F^E(\tilde{\alpha}_g)) \in  {T}_{K} (\mathbb{A}_{E,f})  / {T}_{K} ({E})\) under the Artin reciprocity map \(\mathbb{A}_{E,f}^{\times} / E^{\times} \twoheadrightarrow \Gamma_E^{ab}\) is always equal to the image of \(\widetilde{F_{\Phi}}(g) \in \Gamma_K^{ab}\) under the map \(\Gamma_K^{ab} \rightarrow \Gamma_E^{ab}\) induced by \(\rho\). This is a direct analogue of the relationship between Langlands' universal Taniyama element and Tate's half-transfer maps.

\section{Plectic Galois theory}

Let \(F\) be any number field and write \(\Sigma_F\) for the set \(\Hom_{\Qalg}(F, \overline{\mathbb{Q}})\). We will say a subfield \(k\) of \(\overline{\mathbb{Q}}\) splits \(F\) if it contains the image of \(F\) under any embedding \(j \in \Sigma_F\). In this case the map \( a\otimes b \mapsto (j(a)b)_{j \in \Sigma_F}\) defines an isomorphism of \(F\)-algebras
\[F \otimes_{\mathbb{Q}} k  \xrightarrow{\sim} \prod_{j  \in {\Sigma_F}} k   \]
where the action of \(F\) on the \(j\) component of the right hand side is via the embedding \(j : F \hookrightarrow k\). When \(k\) is Galois over \(\mathbb{Q}\) then we call the group  \(\Aut_F(F\otimes_{\mathbb{Q}} k )\) of all automorphisms of the \(F\)-algebra \(F\otimes_{\mathbb{Q}} k\) the \(F\)-plectic Galois group of \(k\).  From the decomposition above it is clear that \(\Aut_F(F\otimes_{\mathbb{Q}} k )\) acts on the set \(\Sigma_F\). 

When \(F\) is a subfield of \(k\) we can interpret the \(F\)-plectic Galois group \(\Aut_F(F\otimes_{\mathbb{Q}} k )\) as a wreath product. In this case we have  \(\Gamma_{k / F} := \Gal(k/F) \subset \Gamma_{k / \mathbb{Q}} := \Gal(k/\mathbb{Q}) \) and we can identify the set \(X = \Gamma_{k / \mathbb{Q}}/ \Gamma_{k / F}\) with \(\Sigma_F\).  If we consider the ring \(k^X\) as an \(F\)-algebra where the \(F\)-algebra structure comes from \(F \subset k\) on each factor then there is a natural isomorphism (see \cite{Nekovar}, Prop. 1.1.3)
\(  S_X \ltimes \Gamma_{k / F}^X \xrightarrow{\sim} \Aut_F(k^X) \)
where the semidirect product  is taken with respect to the natural permutation action of the symmetric group \(S_X\) on \(\Gamma_{k / F}^X\). This is given by mapping \((\pi, (h_x)_{x \in X}) \in S_X \ltimes \Gamma_{k / F} ^X\) to the automorphism of \(k^X\) which sends
\( (a_x)_{x \in X} \mapsto (b_x)_{x \in X}\)
where \( b_{\pi(x)} = h_x(a_x) \). Finally, if we fix a set \(s = (s_x)_{x \in X}\) of coset representatives for \( \Gamma_{k / F}\) in \(\Gamma_{k / \mathbb{Q}}\) then the map \(s :  k^{X} \rightarrow \prod_{x  \in X} k  \)
 given by \( (a_x)_{x \in X} \mapsto (s_x(a_x))_{x \in X}\) is an isomorphism of \(F\)-algebras and induces an isomorphism
\[ \beta_s :  \Aut_F(F\otimes_{\mathbb{Q}} k) \xrightarrow{\sim} S_X \ltimes \Gamma_{k / F}^X \]
of groups. If each coset representative \(s_x\) is replaced by \(s^{\prime}_x = s_xt_x\) with \({t} = (t_x)_{x \in X} \in \Gamma_{k / F}^X\) then
\(  \beta_{s^{\prime}}(g) = (1, {t})^{-1} \beta_s(g) (1, {t})  \in S_X \ltimes \Gamma_{k / F}^X \)
for any \(g \in  \Aut_F(F\otimes_{\mathbb{Q}} k)\).

There is a canonical injective group homomorphism \(\Gamma_{k/ \mathbb{Q}} \hookrightarrow \Aut_F(F\otimes_{\mathbb{Q}} k )\) given by sending \(g \in \Gamma_{k/ \mathbb{Q}} \) to \(id_F \otimes g \in  \Aut_F(F\otimes_{\mathbb{Q}} k)\). More generally the construction of plectic Galois groups is functorial in \(F\) in the sense that if \(F\) and \(F^{\prime}\) are both split by \(k\) then a map \(F \hookrightarrow F^{\prime}\) of number fields induces a map 
\( \Aut_{F}(F \otimes_{\mathbb{Q}} k) \hookrightarrow \Aut_{F^{\prime}}(F^{^{\prime}}\otimes_{\mathbb{Q}} k) \) of plectic Galois groups. When \(F \subset F^{\prime} \subset k\) this can be described explicitly in terms of wreath products as follows:

\begin{proposition} Let \(k\) be Galois over \(\mathbb{Q}\) and \(F \subset F^{\prime} \subset k\).  As above let us write \(X\) for the set \( \Gamma_{k / \mathbb{Q}} / \Gamma_{k/F} = \Sigma_F\) but also let \(X^{\prime} \) denote \(\Gamma_{k / \mathbb{Q}} / \Gamma_{k/F^{\prime}} = \Sigma_{F^{\prime}} \) and \(Y\) denote \( \Gamma_{k / F} / \Gamma_{k/F^{\prime}} = \Hom_{F}(F^{\prime}, k) \).  If we pick coset representatives \((s_x)_{x \in X}\) for \( \Gamma_{k/F} \) in \(\Gamma_{k / \mathbb{Q}}\) and \((t_y)_{y \in Y}\) for \( \Gamma_{k/F^{\prime}} \) in \(\Gamma_{k/F} \) then \( (s^{\prime}_{x^{\prime}} ) = \{s_xt_y : x \in X, y\in Y\}\) is a set of coset representatives for \(\Gamma_{k/F^{\prime}} \) in \(\Gamma_{k/\mathbb{Q}} \). 
If we write \(\beta_s(g) = (\pi, (h_x)_{x\in X}) \in S_X \ltimes  \Gamma_{k/F}^X\) then for any \(g \in \Aut_{F}(F \otimes_{\mathbb{Q}} k)\) we have \[\beta_{s^{\prime}}( id_{F^{\prime}} \otimes g) = (\pi^{\prime}, (h^{\prime}_{x^{\prime}})_{x^{\prime} \in X^{\prime}}) \in S_{X^{\prime}} \ltimes \Gamma_{k/F^{\prime}}^{X^{\prime}}\]  with
\(\pi^{\prime}(s_xt_y|_{F^{\prime}}) = s_{\pi(x)}t_{h_x y} |_{F^{\prime}}\) and \(  h^{\prime}_{s_xt_y|_{F^{\prime}}} = t_{h_xy}^{-1}h_xt_y \).
\begin{proof} This follows from \cite{Nekovar}, Proposition 1.1.8 and (the proof of) Proposition 2.2.4. 
\end{proof}
\end{proposition}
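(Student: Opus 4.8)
The strategy is to pass to the wreath-product models on both sides and reduce the statement to an explicit description of the base-change homomorphism \(S_X\ltimes\Gamma_{k/F}^X\to S_{X'}\ltimes\Gamma_{k/F'}^{X'}\) induced by \(F\hookrightarrow F'\). By construction, \(\beta_s\) (resp. \(\beta_{s'}\)) is obtained by transporting the tautological action of \(\Aut_F(F\otimes_{\mathbb{Q}}k)\) on \(F\otimes_{\mathbb{Q}}k\) (resp. of \(\Aut_{F'}(F'\otimes_{\mathbb{Q}}k)\) on \(F'\otimes_{\mathbb{Q}}k\)) through the \(F\)-algebra isomorphism \(s\colon k^X\xrightarrow{\sim}\prod_{x\in X}k\) (resp. \(s'\colon k^{X'}\xrightarrow{\sim}\prod_{x'\in X'}k\)) recalled above. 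So it is equivalent, and cleaner, to identify \(F\otimes_{\mathbb{Q}}k\) with \(k^X\) and \(F'\otimes_{\mathbb{Q}}k\) with \(k^{X'}\) once and for all, and to compute the automorphism of \(k^{X'}\) attached to \(\beta_{s'}(\mathrm{id}_{F'}\otimes g)\) in terms of \(\beta_s(g)=(\pi,(h_x)_x)\).

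First I would set up the compatible iterated identification of \(F'\otimes_{\mathbb{Q}}k\). Base-changing \(F\otimes_{\mathbb{Q}}k\cong k^X\) along \(F\hookrightarrow F'\) gives \(F'\otimes_{\mathbb{Q}}k\cong F'\otimes_F k^X\cong(F'\otimes_F k)^X\), and \(F'\otimes_F k\cong k^Y\) with \(Y=\Hom_F(F',k)=\Gamma_{k/F}/\Gamma_{k/F'}\), the \(F'\)-algebra structure on \(k^Y\) coming from \(F'\subset k\) on each factor; hence \(F'\otimes_{\mathbb{Q}}k\cong(k^Y)^X\cong k^{X\times Y}\). The content of the choice \(s'_{x'}=\{s_xt_y\}\) of coset representatives is precisely that, under the bijection \(X\times Y\xrightarrow{\sim}X'\) sending \((x,y)\) to the coset of \(s_xt_y\) (that is, to \(s_xt_y|_{F'}\in\Sigma_{F'}\)), the resulting identification \(k^{X\times Y}\cong\prod_{(x,y)}k\) coincides with \(s'\colon k^{X'}\xrightarrow{\sim}\prod_{x'}k\). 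Checking this compatibility — matching which copy of \(k\), and which restriction of which coset representative, sits in each slot — is the bookkeeping heart of the argument, and reproves along the way the first (coset-representative) claim of the statement; it is also the substance of \cite{Nekovar}, Proposition 1.1.8.

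With this in hand the computation is mechanical. The automorphism of \(k^X\) attached to \(\beta_s(g)=(\pi,(h_x)_x)\) sends \((a_x)_x\mapsto(b_x)_x\) with \(b_{\pi(x)}=h_x(a_x)\), so \(\mathrm{id}_{F'}\otimes g\) acts on \((F'\otimes_F k)^X\) by permuting the \(X\)-factors by \(\pi\) and applying \(\mathrm{id}_{F'}\otimes h_x\colon F'\otimes_F k\to F'\otimes_F k\) in the \(x\)-th slot. It then remains to describe, for \(h\in\Gamma_{k/F}\), the element of \(S_Y\ltimes\Gamma_{k/F'}^Y\) corresponding to \(\mathrm{id}_{F'}\otimes h\) under \(\Aut_{F'}(k^Y)\cong S_Y\ltimes\Gamma_{k/F'}^Y\): this is the canonical map \(\Gamma_{k/F}\to\Aut_{F'}(F'\otimes_F k)\) in the relative (base-\(F\)) version of the wreath-product picture above, and left translation by \(h\) permutes \(Y=\Gamma_{k/F}/\Gamma_{k/F'}\) by \(y\mapsto hy\), while, since \(ht_y\) and \(t_{hy}\) lie in the same \(\Gamma_{k/F'}\)-coset, the \(y\)-th component correction is \(t_{hy}^{-1}ht_y\in\Gamma_{k/F'}\) (again \cite{Nekovar}, Propositions 1.1.8 and 2.2.4). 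Overlaying the \(\pi\)-action on \(X\) with these \(Y\)-actions on \(k^{X\times Y}=k^{X'}\) and reading off through \((x,y)\leftrightarrow s_xt_y|_{F'}\), the output slot \((\pi(x),h_xy)\) receives \(t_{h_xy}^{-1}h_xt_y\) applied to the input slot \((x,y)\), i.e. \(\pi'(s_xt_y|_{F'})=s_{\pi(x)}t_{h_xy}|_{F'}\) and \(h'_{s_xt_y|_{F'}}=t_{h_xy}^{-1}h_xt_y\), as claimed. The main obstacle is not conceptual but lies in the two identifications above — pinning down all index and left/right conventions so that the corrections land in \(\Gamma_{k/F'}\) in the stated slot and orientation; once those are fixed the rest is formal, and one may alternatively simply invoke the cited results of Nekov\'a\v{r}.
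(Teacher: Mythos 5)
Your argument is correct and is essentially the paper's route: the paper simply defers to Nekov\'a\v{r} (Propositions 1.1.8 and 2.2.4), and what you have written out — the iterated identification \(F'\otimes_{\mathbb{Q}}k\cong (F'\otimes_F k)^X\cong k^{X\times Y}\) together with the description of \(\mathrm{id}_{F'}\otimes h\) on \(k^Y\) as \((y\mapsto hy,\,(t_{hy}^{-1}ht_y)_y)\) — is precisely the content of those citations. Note also that the coset computation you perform at the end is carried out verbatim, in the purely group-theoretic setting, in the paper's Proposition 2.3, so your bookkeeping can be cross-checked against that proof.
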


Now let \(u \in \Gamma_{k/\mathbb{Q}}\) and \(F^{\prime} = u(F) \subset k\). Then \(u|_F : F \xrightarrow{\sim} F^{\prime}\) induces an isomorphism 
\( [u] : \Aut_{F}(F \otimes_{\mathbb{Q}} k) \rightarrow \Aut_{F^{\prime}}(F^{\prime} \otimes_{\mathbb{Q}} k) \)
which in this case is just given by 
\( g \mapsto (u \otimes id_k) \circ g \circ (u^{-1} \otimes id_{k} ) \). Again we have the following explicit description:

\begin{proposition}  Let \(F \subset k\), \(u \in \Gamma_{k/\mathbb{Q}}\) and \(F^{\prime} = u(F) \subset k\).  We have a bijection \(X \xrightarrow{\sim} X^{\prime}\) given by \(x \mapsto x^{\prime} = xu^{-1}\) (in terms of embeddings this is just  the natural identification of \(\Sigma_F\) with \(\Sigma_{F^{\prime}}\) induced by \(u|_F\)). If we choose coset representatives \(s = (s_x)_{x \in X}\) for  \(\Gamma_{k/F}\) in \(\Gamma_{k/\mathbb{Q}}\) then we can define coset representatives for  \(\Gamma_{k/F^{\prime}}\) in \(\Gamma_{k/\mathbb{Q}}\)  by \(s^{\prime}_{x^{\prime}} = s_x u^{-1}\).  With these choices, there is a commutative diagram 
\[ \begin{CD}  \Aut_{F}(F \otimes k) @> \beta_s>> S_{X} \ltimes \Gamma_{k/F}^{X}  \\
@V [u] VV @V u_{\ast} VV \\
 \Aut_{F^{\prime}}(F^{\prime} \otimes k) @>\beta_{s^{\prime}} >> S_{X^{\prime}} \ltimes \Gamma_{k/F^{\prime}}^{X^{\prime}} \end{CD}  \]
where here  \(u_{\ast} : S_X \ltimes \Gamma_{k/F}^X \rightarrow S_{X^{\prime}} \ltimes \Gamma_{k/F^{\prime}}^{X^{\prime}} \)
denotes the map \( (\pi, (h_x)_x) \mapsto (\pi^{\prime}, (h^{\prime}_{x^{\prime}})_{x^{\prime}}) \) where 
\( \pi^{\prime}(x^{\prime}) = \pi(x)^{\prime}\) (i.e. \(  \pi^{\prime}(xu^{-1}) = \pi(x)u^{-1}) \)
and 
\( h^{\prime}_{x^{\prime}} = u h_x u^{-1} \).  

\begin{proof} This is \cite{Nekovar}, Proposition 1.1.6 and Proposition 1.1.7(ii).
\end{proof}
\end{proposition}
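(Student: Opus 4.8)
The plan is to reduce the assertion to a computation in the ``standard model'' $k^X$ and then chase the definitions. Recall that $\beta_s$ is by construction the composite of two isomorphisms: first, conjugation by the explicit $F$-algebra isomorphism $\theta_s \colon k^X \xrightarrow{\sim} F\otimes_{\mathbb{Q}} k$ obtained by composing $(a_x)_x \mapsto (s_x(a_x))_x$ with the inverse of $a\otimes b \mapsto (j(a)b)_{j\in\Sigma_F}$ (here $x\in X$ is identified with $j = s_x|_F \in \Sigma_F$); and second, the isomorphism $\Aut_F(k^X)\xrightarrow{\sim} S_X\ltimes\Gamma_{k/F}^X$ of the excerpt. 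Likewise $\beta_{s'}$ is built from $\theta_{s'}$, where now $s'_{x'} = s_x u^{-1}$ and $x' = xu^{-1}$. Since $[u]$ is conjugation by $u\otimes\mathrm{id}_k$, and setting $\Theta \defeq \theta_{s'}^{-1}\circ(u\otimes\mathrm{id}_k)\circ\theta_s$ one has $\theta_{s'}^{-1}\circ[u](g)\circ\theta_{s'} = \Theta\circ(\theta_s^{-1}\circ g\circ\theta_s)\circ\Theta^{-1}$, it suffices to (i) compute $\Theta\colon k^X\xrightarrow{\sim}k^{X'}$ explicitly, and (ii) check that conjugation by $\Theta$ carries the subgroup $S_X\ltimes\Gamma_{k/F}^X$ of $\Aut(k^X)$ onto $S_{X'}\ltimes\Gamma_{k/F'}^{X'}$ via the map $u_*$ of the statement.

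For step (i) the key observation is that $u\otimes\mathrm{id}_k$ is $k$-linear for the $k$-action through the second tensor factor --- equivalently, for the diagonal $k$-action on $\prod_j k$ --- so on the product side it becomes a $k$-algebra isomorphism $\prod_{j\in\Sigma_F}k\xrightarrow{\sim}\prod_{j'\in\Sigma_{F'}}k$ that merely relabels the factors. Testing on elements $a\otimes 1$ identifies the relabelling bijection $\Sigma_{F'}\to\Sigma_F$ as $j'\mapsto j'\circ u|_F$, which under the identifications $X\cong\Sigma_F$, $X'\cong\Sigma_{F'}$ and the prescription $s'_{x'}=s_x u^{-1}$ reads $x'=xu^{-1}\mapsto x$. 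Unwinding $\theta_s$ and $\theta_{s'}$ then gives $\Theta\big((a_x)_x\big) = (u(a_x))_{xu^{-1}}$, i.e.\ $\Theta$ relabels indices by $x\mapsto xu^{-1}$ and applies $u$ to each entry. It is precisely the normalisation $s'_{x'}=s_xu^{-1}$ that produces this clean formula; a different choice of coset representatives would introduce an inner twist, consistent with the change-of-representatives formula $\beta_{s'}(g)=(1,t)^{-1}\beta_s(g)(1,t)$ recalled above.

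For step (ii) one feeds $\Theta$ through the recipe $\alpha\leftrightarrow(\pi,(h_x))$, where $\alpha\in\Aut_F(k^X)$ acts by $(a_x)_x\mapsto(b_x)_x$ with $b_{\pi(x)}=h_x(a_x)$: computing $\Theta\circ\alpha\circ\Theta^{-1}$ on a tuple $(a'_{x'})_{x'}$ and reading off the permutation and diagonal parts yields $\pi'(xu^{-1})=\pi(x)u^{-1}$ and $h'_{xu^{-1}}=u h_x u^{-1}$, which is exactly $u_*(\pi,(h_x))$; in particular the image lands in $S_{X'}\ltimes\Gamma_{k/F'}^{X'}$ since $u\Gamma_{k/F}u^{-1}=\Gamma_{k/F'}$. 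Combining (i) and (ii) with $\alpha=\theta_s^{-1}\circ g\circ\theta_s$ gives $\beta_{s'}([u](g))=u_*(\beta_s(g))$ for all $g$, which is the commutativity of the diagram. I expect no genuine conceptual obstacle here: the only real work is bookkeeping --- keeping straight the three identifications ($X\cong\Sigma_F$, $X'\cong\Sigma_{F'}$, and $X\cong X'$ via right multiplication by $u^{-1}$), the left-coset conventions, and which of the maps in sight are $k$-linear versus merely $\mathbb{Q}$-linear. An alternative is to decompose $u\otimes\mathrm{id}_k=(u\otimes u)\circ(\mathrm{id}_F\otimes u^{-1})$ and reduce to the already-recorded description of the canonical inclusion $\Gamma_{k/\mathbb{Q}}\hookrightarrow\Aut_F(F\otimes_{\mathbb{Q}}k)$ together with the effect of the semilinear isomorphism $u\otimes u$, but this does not seem to shorten the computation.
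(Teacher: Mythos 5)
Your proof is correct. Note that the paper offers no argument of its own here --- it simply cites Nekov\'a\v{r}, Propositions 1.1.6 and 1.1.7(ii) --- and your computation supplies exactly the verification that citation points to: reducing to the standard model \(k^X\), computing \(\Theta = \theta_{s'}^{-1}\circ(u\otimes\mathrm{id}_k)\circ\theta_s\) as the relabel-and-twist map \((a_x)_x\mapsto(u(a_x))_{xu^{-1}}\) (which is where the normalisation \(s'_{x'}=s_xu^{-1}\) is used), and reading off \(\pi'(xu^{-1})=\pi(x)u^{-1}\) and \(h'_{x'}=uh_xu^{-1}\) from conjugation by \(\Theta\). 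All the identifications are handled consistently, so there is nothing to add.
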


Plectic Galois groups can be understand in purely group-theoretic terms. To see this, let \(G\) be any group and \(H \subset G\) a subgroup of finite index. We write \(X\) for the set \(G / H\) of left cosets of \(H \) in \(G\). Then \(H\) acts on the right on the set \(G\) by right translation and an orbit of \(H\) under this action is nothing more than an element of \(X\).  

\begin{definition}[Nekov\'a\v{r}, Scholl - cf. \cite{SN}, \S3] The plectic group of the pair \((G,H)\) is the group
 \( G \# H \defeq \Aut_{\rsets H}G \)
 of automorphisms of \(G\) as a right \(H\)-set. More explicitly, this consists of the following collection of set-theoretic bijections
 \[ G \# H = \{ \alpha : G \xrightarrow{\sim} G  \text{ such that for all } g \in G, h\in H \text{ we have } \alpha(gh) = \alpha(g) h \} \]
 Note there is a natural inclusion \( G \hookrightarrow G \# H\) given by mapping an element \(g \in G\) to the left translation by \(g\) map \(L_g : G \rightarrow G \).
\end{definition} 

 Any \(\alpha \in G \# H\) gives rise to a well defined permutation of the set \(X = G/H\). We will denote this permutation by \(\pi = \pi(\alpha) \in S_X\) but where the context is clear we will often ease the notation by writing this simply as \(x \mapsto \alpha(x)\),  or alternatively as \(x \mapsto \alpha x\), for all \(x \in X\). 
This plectic group can also be interpreted in terms of a wreath product. To see this,  let us choose a collection \(s = (s_x)_{x\in X}\) of coset representatives for \(H\) in \(G\). The permutation \(\pi\) is defined by the fact that \( \alpha(s_x)H = \pi(s_xH) = s_{\alpha x}H\) and so the elements
\[  h_x = h_{x}(\alpha) :=  s_{\alpha x}^{-1}\alpha(s_x) \]
lie in \( H \subset G\). After making these choices we can define an isomorphism  \[ \rho_s :  G \# H \rightarrow S_X \ltimes H^X \] 
by sending  \(\alpha  \in G \# H\) to \((\pi, (h_x(\alpha))_{x \in X}) \in S_X \ltimes H^X\). As before, the semidirect product on the right hand side is taken with respect to the permutation action of \(S_X\) on \(H^X\). If the choice of coset representatives \(s = (s_x)_{x \in X}\) is changed to \(s^{\prime} = ( s_xt_x)_{x\in X} \)  for some \({t} = (t_x)_{x \in X} \in H^X\) then the maps \(\rho_s\) and \(\rho_{s^{\prime}}\) are related by the formula 
\( \rho_{s^{\prime}}(\alpha) = (1, {t})^{-1} \rho_s(\alpha) (1, {t}) \)
for all 
\(\alpha \in  G \# H\).

If \(H^{\prime} \subset H \subset G\) are subgroups of finite index then \(G \# H \) is a contained in \(G \# H^{\prime}\). The following proposition describes this inclusion map explicitly in terms of wreath products:

\begin{proposition} Suppose  \(G \supset H \supset H^{\prime}\) are subgroups of finite index and let \(X\) denote the set \(G/H\) and similarly let \(X^{\prime}\) denote \( G/H^{\prime}\) and \(Y \) denote  \(H/H^{\prime}\). Pick coset representatives \((s_x)_{x \in X}\) for \(H\) in \(G\) and \((t_y)_{y \in Y}\) for \(H^{\prime}\) in \(H\). Then \(s^{\prime} = \{s_xt_y : x \in X, y\in Y\}\) is a set of coset representatives for \(H^{\prime}\) in \(G\). If we write \(\rho_s(\alpha) = (\pi, (h_x)_{x\in X}) \in S_X \ltimes H^X\) then for any \(\alpha \in  G \# H \subset G \# H^{\prime}\) we have \[\rho_{s^{\prime}}(\alpha) = (\pi^{\prime}, (h^{\prime}_{x^{\prime}})_{x^{\prime} \in X^{\prime}}) \in S_{X^{\prime}} \ltimes (H^{\prime})^{X^{\prime}}\] where 
\( \pi^{\prime}(s_xt_yH^{\prime}) = s_{\pi(x)}t_{h_x y} H^{\prime} \)  and \(  h^{\prime}_{s_xt_yH^{\prime}} = t_{h_xy}^{-1}h_xt_y \) 
\begin{proof} We first compute 
\[ s_{\pi(x)} t_{h_x y} H^{\prime} =  s_{\pi(x)} h_x t_y H^{\prime} = s_{\pi(x) } s_{\pi(x)}^{-1} \alpha(s_x) t_y H^{\prime} =  \alpha(s_x t_y)H^{\prime} = \pi^{\prime}(s_xt_yH^{\prime}) \]
as \(t_y \in H\) and  \(\alpha \in G \# H \subset G \# H^{\prime}\). Thus 
\[h^{\prime}_{s_xt_yH^{\prime}} =  (s_{\pi(x)}t_{h_x y})^{-1} \alpha(s_x t_y) = t_{h_x y}^{-1}s_{\pi(x)}^{-1} \alpha(s_x) t_y = t_{h_xy}^{-1}h_xt_y  \]
as claimed. 
\end{proof}
\end{proposition}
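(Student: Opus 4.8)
The plan is to establish the two assertions in sequence, both by unwinding the definitions, the key point being that $\alpha$ is assumed to lie in the smaller group $G \# H$ and not merely in $G \# H'$.

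First I would verify that $s' = \{s_x t_y : x\in X,\ y\in Y\}$ is genuinely a set of coset representatives for $H'$ in $G$. Since $[G:H'] = [G:H]\,[H:H'] = |X|\cdot|Y|$, it is enough to check that the cosets $s_x t_y H'$ are pairwise distinct. Supposing $s_x t_y H' = s_{x_1} t_{y_1} H'$, one gets $s_{x_1}^{-1} s_x \in t_{y_1} H' t_y^{-1} \subseteq H$ (because $t_{y_1}, t_y \in H$ and $H' \subseteq H$), forcing $s_x H = s_{x_1} H$ and hence $x = x_1$ since the $s_x$ represent the cosets of $H$; then $t_y H' = t_{y_1} H'$ forces $y = y_1$.

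Next I would compute $\rho_{s'}(\alpha)$ directly from its definition: $\rho_{s'}(\alpha) = (\pi', (h'_{x'})_{x'\in X'})$ where $\pi'$ is the permutation of $X'$ induced by $\alpha$ and $h'_{x'} = (s'_{\pi'(x')})^{-1}\alpha(s'_{x'})$. Writing a general element of $X'$ as $x' = s_x t_y H'$, one has $\alpha(s_x t_y) = \alpha(s_x)\, t_y = s_{\pi(x)} h_x t_y$, where the first equality uses $t_y\in H$ together with $\alpha\in G\# H$, and the second uses $h_x = s_{\pi(x)}^{-1}\alpha(s_x)$. Since $h_x t_y\in H$ and, by the very definition of the action of $H$ on $Y = H/H'$, one has $h_x t_y H' = t_{h_x y} H'$, I can rewrite this as $\alpha(s_x t_y) = (s_{\pi(x)} t_{h_x y})\cdot(t_{h_x y}^{-1} h_x t_y)$ with $t_{h_x y}^{-1} h_x t_y\in H'$. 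The element $s_{\pi(x)} t_{h_x y}$ is the chosen representative $s'_{x''}$ for $x'' = s_{\pi(x)} t_{h_x y} H'$, so reading off the last displayed equality gives simultaneously $\pi'(x') = s_{\pi(x)} t_{h_x y} H'$ and $h'_{x'} = (s_{\pi(x)} t_{h_x y})^{-1}\alpha(s_x t_y) = t_{h_x y}^{-1} h_x t_y$, which are exactly the asserted formulas.

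The hardest part is essentially keeping the wreath-product conventions straight; there is no real obstacle. The one genuinely necessary hypothesis is that $\alpha\in G\# H$, used in the step $\alpha(s_x t_y) = \alpha(s_x) t_y$: this would fail for a general $\alpha\in G\# H'$ since $t_y$ lies in $H$ but not, in general, in $H'$.
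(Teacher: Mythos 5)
Your proposal is correct and follows essentially the same computation as the paper: use $t_y\in H$ and $\alpha\in G\#H$ to get $\alpha(s_x t_y)=\alpha(s_x)t_y=s_{\pi(x)}h_xt_y$, then split off $t_{h_xy}$ to read off both $\pi'$ and $h'$. The only difference is that you also verify explicitly that the $s_xt_y$ form a complete set of coset representatives, which the paper asserts without proof; that is a harmless (and welcome) addition.
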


Now fix \(u \in G\) and let \(H^{\prime}\) denote the conjugate subgroup \(uHu^{-1}\).  Then \(u\) gives rises to a map 
\( [u] : G \# H \rightarrow G \# H^{\prime} \)
given by \(\alpha \mapsto \alpha^{\prime}\) where for all \(g \in G\) we have \(\alpha^{\prime}(g) = \alpha(gu)u^{-1} \). The map \([u]\) only depends on the left coset \(uH\) and can be described in terms of wreath products as follows:

\begin{proposition} Fix \(u \in G\) and let \(H^{\prime}\) denote the conjugate subgroup \(uHu^{-1}\). We have a bijection \(X \xrightarrow{\sim} X^{\prime} = G / H^{\prime}\) given by \(x \mapsto x^{\prime} = xu^{-1} =  \{ gu^{-1} : g \in X\} \). Moreover if we choose coset representatives \(s = (s_x)_{x \in X}\) for \(H\) in \(G\) then we can define coset representatives for \(H^{\prime} \) in \(G\) by \(s^{\prime}_{x^{\prime}} = s_x u^{-1}\). With these choices, there is a commutative diagram
 \[ \begin{CD} G \# H @> \rho_s>> S_X \ltimes H^X \\
@V [u] VV @V u_{\ast} VV \\
G \# H^{\prime} @>\rho_{s^{\prime}} >> S_{X^{\prime}} \ltimes (H^{\prime})^{X^{\prime}} \end{CD}  \]
where here  \(u_{\ast} : S_X \ltimes H^X \rightarrow S_{X^{\prime}} \ltimes (H^{\prime})^{X^{\prime}} \)
denotes the map \( (\pi, (h_x)_x) \mapsto (\pi^{\prime}, (h^{\prime}_{x^{\prime}})_{x^{\prime}}) \) where 
\( \pi^{\prime}(x^{\prime}) = \pi(x)^{\prime}\) (i.e. \(  \pi^{\prime}(xu^{-1}) = \pi(x)u^{-1}) \)
and 
\( h^{\prime}_{x^{\prime}} = u h_x u^{-1} \).  
\begin{proof} 
 
%

The proof is a straightforward calculation using the the definitions.
\end{proof}
\end{proposition}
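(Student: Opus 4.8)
The plan is to verify the asserted commutativity $\rho_{s'}\circ[u] = u_*\circ\rho_s$ by a direct computation with coset representatives, exactly as in the proof of the preceding proposition. Fix $\alpha \in G \# H$ and write $\rho_s(\alpha) = (\pi,(h_x)_{x\in X})$, so by definition $\alpha(s_x) = s_{\pi(x)}h_x$ with $h_x = s_{\pi(x)}^{-1}\alpha(s_x) \in H$. Set $\alpha' = [u](\alpha)$, i.e. $\alpha'(g) = \alpha(gu)u^{-1}$ for all $g \in G$. I would begin with the two routine preliminaries implicit in the statement: first, $\alpha'$ does lie in $G \# H'$, since for $g \in G$ and $h' = uhu^{-1} \in H' = uHu^{-1}$ one has $\alpha'(gh') = \alpha(guhu^{-1}u)u^{-1} = \alpha(guh)u^{-1} = \alpha(gu)hu^{-1} = \alpha'(g)h'$, the third equality using $\alpha \in G \# H$; second, the elements $s'_{x'} := s_xu^{-1}$ form a system of coset representatives for $H'$ in $G$, because $s_xu^{-1}H' = s_xu^{-1}(uHu^{-1}) = (s_xH)u^{-1} = xu^{-1} = x'$. (The same manipulation shows $[u]$ depends only on the coset $uH$, since $\alpha(g(uh))(uh)^{-1} = \alpha(gu)u^{-1}$.)

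Next I would compute the permutation $\pi'' = \pi(\alpha') \in S_{X'}$ determined by $\alpha'$ with respect to the representatives $s'$. Applying $\alpha'$ to $s'_{x'} = s_xu^{-1}$ gives $\alpha'(s_xu^{-1}) = \alpha(s_x)u^{-1} = s_{\pi(x)}h_xu^{-1}$, whose class in $X' = G/H'$ is $s_{\pi(x)}h_xu^{-1}H' = s_{\pi(x)}h_xHu^{-1} = (s_{\pi(x)}H)u^{-1} = \pi(x)u^{-1} = \pi(x)'$, using $h_x \in H$ and $u^{-1}H'u = H$. Hence $\pi''(x') = \pi(x)'$, which is precisely the permutation $\pi'$ in the definition of $u_*$.

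Finally I would extract the $H'$-valued components $h''_{x'} = (s'_{\pi''(x')})^{-1}\alpha'(s'_{x'})$. Since $\pi''(x') = \pi(x)'$ we have $s'_{\pi''(x')} = s_{\pi(x)}u^{-1}$, while the previous step gives $\alpha'(s'_{x'}) = s_{\pi(x)}h_xu^{-1}$, so
\[ h''_{x'} = (s_{\pi(x)}u^{-1})^{-1}s_{\pi(x)}h_xu^{-1} = u\,s_{\pi(x)}^{-1}s_{\pi(x)}\,h_xu^{-1} = uh_xu^{-1}, \]
which is the component $h'_{x'}$ of $u_*(\pi,(h_x)_x)$. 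Combined with the identification of $\pi''$ with $\pi'$ this gives $\rho_{s'}(\alpha') = u_*(\rho_s(\alpha))$, hence the commutative diagram. I do not expect any genuine obstacle: the argument is pure bookkeeping, and the only points needing care are consistently invoking the defining equivariance $\alpha(gh) = \alpha(g)h$ of elements of $G \# H$ and keeping track of the identity $u^{-1}H'u = H$ when moving between cosets of $H$ and of $H'$. The choice $s'_{x'} = s_xu^{-1}$ of representatives is exactly what makes the conjugation $h_x \mapsto uh_xu^{-1}$ come out with no correction terms.
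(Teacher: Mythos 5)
Your proof is correct and is exactly the "straightforward calculation using the definitions" that the paper invokes without writing out: you verify that $[u](\alpha)$ lies in $G\#H'$, that the $s'_{x'}=s_xu^{-1}$ are genuine coset representatives, and then compute the permutation and the $H'$-components directly, all of which checks out. Nothing further is needed.
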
 

From these computations, we conclude: 

\begin{theorem}  Let \(k \subset \overline{\mathbb{Q}}\) be Galois over \(\mathbb{Q}\) and \( F \subset k\) a number field. A choice of coset representatives \(s\) for \(\Gamma_{k / F} \subset \Gamma_{k / \mathbb{Q}} \) defines a composite isomorphism \[ \Aut_F(F\otimes_{\mathbb{Q}} k) \xrightarrow{\beta_s} S_X \ltimes H^X \xrightarrow{\rho_s^{-1}}  \Gamma_{k / \mathbb{Q}} \# \Gamma_{k / F}\]
which is independent of the choice of \(s\). We will denote this canonical isomorphism \( \Aut_F(F\otimes_{\mathbb{Q}} k) \xrightarrow{\sim}   \Gamma_{k / \mathbb{Q}} \# \Gamma_{k / F}\) by \(g \mapsto \alpha_g\). Moreover, 
\begin{itemize} 
\item if \(F \subset F^{\prime} \subset k\) is another number field then the natural map \(\Aut_F(F\otimes_{\mathbb{Q}} k) \hookrightarrow \Aut_{F^{\prime}}(F^{\prime} \otimes_{\mathbb{Q}} k)\) corresponds to the inclusion \(  \Gamma_{k/\mathbb{Q}} \# \Gamma_{k/F}  \hookrightarrow  \Gamma_{k/\mathbb{Q}} \# \Gamma_{k/F^{\prime}} \)
\item Similarly, for \(u \in \Gamma_{k/ \mathbb{Q}}\), the natural map \([u] : \Aut_F(F\otimes_{\mathbb{Q}} k) \xrightarrow{\sim} \Aut_{F^{\prime}}(F^{\prime} \otimes_{\mathbb{Q}} k) \) induced by \(u|_F : F \xrightarrow{\sim} F^{\prime} := u(F)\) corresponds to the map \( [u] : \Gamma_{k/\mathbb{Q}} \# \Gamma_{k/F}  \xrightarrow{\sim}  \Gamma_{k/\mathbb{Q}} \# \Gamma_{k/F^{\prime}} \) from Proposition 2.4. 
\end{itemize}
\begin{proof} The first observation is due to Nekov\'a\v{r} and Scholl (cf. \cite{SN}, \S3). Indeed, we have already noted that if the choice of \(s = (s_x)_{x \in X}\) is changed to \(s^{\prime} = ( s_xt_x)_{x\in X} \) for some \({t} = (t_x)_{x \in X} \in H^X\) then we have both
\( \beta_{s^{\prime}}(g) = (1, {t})^{-1} \beta_s(g) (1, {t}) \)
for all \(g \in \Aut_F(F\otimes_{\mathbb{Q}} k)\) and 
\( \rho_{s^{\prime}}(\alpha) = (1, {t})^{-1} \rho_s(\alpha) (1, {t}) \)
for all
\(\alpha \in  \Gamma_{k / \mathbb{Q}} \# \Gamma_{k / F}\), which proves the first claim. The final two claims follow from combining Proposition 2.1 and Proposition 2.3 and then Proposition 2.2 and Proposition 2.4 respectively. 
\end{proof} 
\end{theorem}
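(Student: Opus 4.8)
The plan is to exploit the fact that Propositions 2.1--2.4 have been arranged so that the transition maps on the ``geometric'' side (the groups \(\Aut_F(F\otimes_{\mathbb{Q}} k)\)) and on the ``group-theoretic'' side (the plectic groups \(\Gamma_{k/\mathbb{Q}}\#\Gamma_{k/F}\)) are governed by \emph{identical} wreath-product formulas; once this is observed, the theorem is a formal diagram chase.

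First I would prove independence of the choice of \(s\). Both \(\beta_s\) and \(\rho_s\) are group isomorphisms onto \(S_X\ltimes H^X\), where \(X=\Gamma_{k/\mathbb{Q}}/\Gamma_{k/F}\) and \(H=\Gamma_{k/F}\), and we have already recorded that replacing \(s=(s_x)\) by \(s'=(s_xt_x)\) with \(t=(t_x)\in H^X\) conjugates each by \((1,t)\): \(\beta_{s'}(\cdot)=(1,t)^{-1}\beta_s(\cdot)(1,t)\) and \(\rho_{s'}(\cdot)=(1,t)^{-1}\rho_s(\cdot)(1,t)\). Hence, since conjugation by \((1,t)\) cancels upon composing, \(\rho_{s'}^{-1}\circ\beta_{s'}=\rho_s^{-1}\circ\beta_s\). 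This defines the canonical isomorphism \(g\mapsto\alpha_g\).

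For the functoriality in \(F\), I would fix coset representatives \((s_x)_{x\in X}\) for \(\Gamma_{k/F}\) in \(\Gamma_{k/\mathbb{Q}}\) and \((t_y)_{y\in Y}\) for \(\Gamma_{k/F'}\) in \(\Gamma_{k/F}\), and compute all the canonical isomorphisms using the composite system \(s'=\{s_xt_y\}\); this is legitimate by the previous paragraph. Proposition 2.1 expresses \(\beta_{s'}(\mathrm{id}_{F'}\otimes g)\) in terms of \(\beta_s(g)=(\pi,(h_x))\) via \(\pi'(s_xt_y|_{F'})=s_{\pi(x)}t_{h_xy}|_{F'}\) and \(h'_{s_xt_y|_{F'}}=t_{h_xy}^{-1}h_xt_y\), while Proposition 2.3 expresses \(\rho_{s'}(\alpha)\), for \(\alpha\in\Gamma_{k/\mathbb{Q}}\#\Gamma_{k/F}\), in terms of \(\rho_s(\alpha)=(\pi,(h_x))\) by the \emph{same} two formulas once one applies the identification \(\Gamma_{k/\mathbb{Q}}/\Gamma_{k/F'}\xrightarrow{\sim}\Sigma_{F'}\), \(g\mapsto g|_{F'}\), which sends \(s_xt_y\Gamma_{k/F'}\) to \(s_xt_y|_{F'}\). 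Therefore \(\rho_{s'}(\alpha_g)\) and \(\beta_{s'}(\mathrm{id}_{F'}\otimes g)\) carry the same wreath-product data, so \(\alpha_{\mathrm{id}_{F'}\otimes g}=\alpha_g\) viewed inside \(\Gamma_{k/\mathbb{Q}}\#\Gamma_{k/F'}\), which is the first bullet. The second bullet goes the same way: take \(s=(s_x)\) for \(\Gamma_{k/F}\) and \(s'_{x'}=s_xu^{-1}\) for \(\Gamma_{k/F'}=u\Gamma_{k/F}u^{-1}\), note that the map \(u_*\) occurring in Proposition 2.2 (for the \(\beta\)'s) and in Proposition 2.4 (for the \(\rho\)'s) is given by the single formula \(\pi'(xu^{-1})=\pi(x)u^{-1}\), \(h'_{x'}=uh_xu^{-1}\), and chain the two commutative squares through \(\rho_s^{-1}\circ\beta_s\) to get \(\alpha_{[u]g}=[u](\alpha_g)\) with the \([u]\) of Proposition 2.4 on the right.

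The one point that genuinely requires care --- and the step I would expect a careful reader to want spelled out --- is verifying that the index sets and bookkeeping in the two pairs of propositions actually coincide: that \(\Gamma_{k/\mathbb{Q}}/\Gamma_{k/F'}\cong\Sigma_{F'}\) carries the coset \(s_xt_y\Gamma_{k/F'}\) to the embedding \(s_xt_y|_{F'}\), and that the bijection \(x\mapsto xu^{-1}\) on cosets matches the natural identification of \(\Sigma_F\) with \(\Sigma_{F'}\) induced by \(u|_F\), so that the wreath-product formulas in Proposition 2.1 versus 2.3 (and in 2.2 versus 2.4) are literally equal and not merely analogous. Granting this, there is no further obstacle: the theorem is assembled from the four propositions together with the conjugation formula for changing \(s\).
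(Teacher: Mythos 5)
Your proposal is correct and follows exactly the paper's argument: independence of $s$ via the matching conjugation formulas for $\beta_s$ and $\rho_s$, and the two functoriality claims by pairing Proposition 2.1 with 2.3 and Proposition 2.2 with 2.4, whose wreath-product formulas coincide. You merely spell out the bookkeeping that the paper leaves implicit.
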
 

Suppose now that \(G\) is a topological group and \(H \subset G\) is an open subgroup of finite index. As discussed above, a choice of coset representatives \(s= (s_x)_{x \in X}\) for \(H\) in \(G\) defines an isomorphism
\(\rho_s : G \# H \xrightarrow{\sim} S_X \ltimes H^X\).
The two groups \(H^{X}\) (with the product topology) and \(S_X\) (with the discrete topology) are topological groups and the action of \(S_X\) on \(H^{X}\) is continuous. Thus
\(S_X \ltimes H^X\)
carries the structure of a topological group. We define a topology on \(G \# H\) by declaring \(U \subset G \# H\) to be open if and only if \(\rho_s(U) \subset S_X \ltimes H^X\) is open.
This is well defined since if we choose a different section \(s^{\prime} = s t\) with \(t = (t_x)_{x \in X} \in H^X\) then 
\(\rho_{s^{\prime}}(U) = (1, t)^{-1}\rho_s(U)(1, t)\)
is open if and only if \( \rho_{s}(U)  \) is. It follows from the explicit calculations in Proposition 2.3 and Proposition 2.4 that with this topology: \begin{itemize}
\item the inclusion \(G \hookrightarrow  G \# H\) is continuous;
\item if \(H^{\prime} \subset H \subset G\) are open subgroups of finite index the inclusion \( G \# H \hookrightarrow G \# H^{\prime} \) is continuous; and 
\item if \(u \in G\) and \(H^{\prime} = uHu^{-1}\) then the map 
\( [u] : G \# H \xrightarrow{\sim} G \# H^{\prime} \)
is a homeomorphism. 
\end{itemize}

\section{Plectic CM theory} 

Let \(K \subset \overline{\mathbb{Q}}\) be a CM field and \(\Sigma_{K} = \Hom_{\Qalg} (K, \mathbb{\overline{Q}}) = \Gamma_{\mathbb{Q}} / \Gamma_{K} \). Let \(F = K^+\) denote the maximal totally real subfield of \(K\). In his work on hidden symmetries in the theory of complex multiplication \cite{Nekovar}, Nekov\'a\v{r} has defined a natural map 
 \[ \widetilde{F_{\Phi}} : \Aut_F(F \otimes_{\mathbb{Q}} \overline{\mathbb{Q}}) \rightarrow \Gamma_K^{ab} \]
 extending Tate's half-transfer 
 \(  {F_{\Phi}} : \Gamma_{\mathbb{Q}} \rightarrow \Gamma_{K}^{ab} \) for any CM type \(\Phi\) of \(K\). 
 
 Following Nekov\'a\v{r}'s notation let us write \(X = \Sigma_F = \Gamma_{\mathbb{Q}} / \Gamma_F\) and fix a section \(s\) given by \(x \mapsto s_x\) to the restriction map \( \Gamma_{\mathbb{Q}} \twoheadrightarrow X = \Hom_{\Qalg}(F ,\overline{\mathbb{Q}}) \). 
We note that \(\{s_x|_K : x\in X \}\) gives a distinguished CM type of \(K\), and so we get the following data:
\begin{itemize} 
\item A bijection between \((\mathbb{Z} / 2\mathbb{Z})^X\) and the set of CM types of \(K\) sending a collection \( a = (a_x)_{x \in X} \) to \(\{ \phi_x = c^{a_x}s_x|_K : x \in X\}\). 
\item A section \(\rho \mapsto w_{\rho} \) to \(\Gamma_{\mathbb{Q}} \twoheadrightarrow \Sigma_K = \Hom_{\Qalg}(K ,\overline{\mathbb{Q}}) \) such that \(w_{c\rho} = cw_{\rho}\) by setting
\[ w_{c^b s_x|_K} = c^{b} s_x\]
\end{itemize}

For \(h \in \Gamma_F \) we define \(\overline{h} \in \mathbb{Z}/2\mathbb{Z}\) by the relation \(h|_K = c^{\overline{h}(x)}  \in \Gal(K/F)\). 

\begin{proposition}(Nekov\'a\v{r})  Fix \(s\) as above and let \(\Phi\) be a CM type corresponding to \(a \in (\mathbb{Z} / 2\mathbb{Z})^X\). Define a map
\( _s\widetilde{F_{\Phi}} : S_X \ltimes \Gamma_F^X \rightarrow \Gamma_K^{ab}\)
by the formula
\[ (\pi, (h_x)_{x\in X}) \mapsto \prod_{x\in X} s_{\pi(x)}^{-1} c^{a_x + \overline{h_x}} s_{\pi(x)} h_x s_x^{-1} c^{a_x} s_x |_{K^{ab}} \] 
Then the composite map 
\[ \widetilde{F_{\Phi}} : \Aut_F(F\otimes_{\mathbb{Q}} \overline{\mathbb{Q}} ) \xrightarrow{\beta_s} S_X \ltimes \Gamma_F^X \xrightarrow{_s\widetilde{F_{\Phi}}} \Gamma_K^{ab}\]
is independent of the choice of \(s\) and its restriction to \(\Gamma_{\mathbb{Q}}\) is \(F_{\Phi}\). 
\begin{proof} This is Proposition 2.1.3 and Proposition 2.1.7 of \cite{Nekovar}.
\end{proof}
\end{proposition}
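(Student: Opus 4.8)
The plan is to treat the two assertions — that the composite $\widetilde{F_\Phi}={}_s\widetilde{F_\Phi}\circ\beta_s$ restricts to $F_\Phi$ on $\Gamma_{\mathbb{Q}}$, and that it is independent of the choice of section $s$ — separately, reducing each to direct manipulation of the defining product. Throughout I would exploit that $F$ is totally real: complex conjugation $c$ lies in $\Gamma_F$, every $\Gamma_{\mathbb{Q}}$-conjugate of $c$ lies in $\Gamma_F$, and the restriction to the CM field $K$ of any such conjugate is the (central) nontrivial element of $\Gal(K/F)$; in particular $s_y^{-1}c^es_y|_K=c^e|_K$ for all $y\in X$, $e\in\mathbb{Z}/2\mathbb{Z}$. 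I would also use that $h\mapsto\overline h$ is a homomorphism $\Gamma_F\to\Gal(K/F)\cong\mathbb{Z}/2\mathbb{Z}$, and — abbreviating the $x$-th factor of the formula as $A_x:=s_{\pi(x)}^{-1}c^{a_x+\overline{h_x}}s_{\pi(x)}\,h_x\,s_x^{-1}c^{a_x}s_x$ — note that $A_x\in\Gamma_F$ and $A_x|_K=c^{2(a_x+\overline{h_x})}|_K=1$, so in fact $A_x\in\Gamma_K$ and every factor of the product genuinely lands in the abelian group $\Gamma_K^{ab}$; this makes the product well-defined and lets me reorder freely.

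For the restriction to $\Gamma_{\mathbb{Q}}$: for $g\in\Gamma_{\mathbb{Q}}$ the associated element of $\Gamma_{\mathbb{Q}}\#\Gamma_F$ is left translation $L_g$, so $\beta_s(g)=(\pi,(h_x))$ with $\pi(x)=gx$ and $h_x=s_{gx}^{-1}gs_x$. Substituting $h_x=s_{gx}^{-1}gs_x$ into the middle of the formula and cancelling gives $\widetilde{F_\Phi}(g)=\prod_x s_{gx}^{-1}c^{a_x+\overline{h_x}}\,g\,c^{a_x}s_x|_{K^{ab}}$. On the Tate side, $w_{\phi_x}=c^{a_x}s_x$ by the explicit section, and I claim $w_{g\phi_x}=c^{a_x+\overline{h_x}}s_{gx}$: indeed $s_{gx}^{-1}c^{a_x+\overline{h_x}}gc^{a_x}s_x=(s_{gx}^{-1}c^{a_x+\overline{h_x}}s_{gx})\,h_x\,(s_x^{-1}c^{a_x}s_x)$ lies in $\Gamma_F$ and restricts to $c^{a_x+\overline{h_x}}c^{\overline{h_x}}c^{a_x}|_K=1$ on $K$, hence lies in $\Gamma_K$, which says precisely that $gc^{a_x}s_x$ and $c^{a_x+\overline{h_x}}s_{gx}$ represent the same point of $\Sigma_K=\Gamma_{\mathbb{Q}}/\Gamma_K$. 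Feeding this into $F_\Phi(g)=\prod_x w_{g\phi_x}^{-1}g\,w_{\phi_x}|_{K^{ab}}$ yields literally the same product as above (using $c^2=1$ to invert), so $\widetilde{F_\Phi}|_{\Gamma_{\mathbb{Q}}}=F_\Phi$.

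For independence of $s$, I would pass to another section $s'=st$ with $t=(t_x)_{x\in X}\in\Gamma_F^X$ and track the data. A fixed CM type $\Phi$ recorded by the vector $a$ relative to $s$ is recorded by $a'=a+\overline t$ relative to $s'$ (compare $c^{a_x}s_x|_K=c^{a'_x}s_xt_x|_K$ using the facts above); by the conjugation rule for $\beta$ from \S2, $\beta_{s'}(g)=(1,t)^{-1}\beta_s(g)(1,t)$, so $\pi$ is unchanged and $h'_x=t_{\pi(x)}^{-1}h_xt_x$, whence $\overline{h'_x}=\overline{t_{\pi(x)}}+\overline{h_x}+\overline{t_x}$. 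Plugging these into ${}_{s'}\widetilde{F_\Phi}(\beta_{s'}(g))$, writing $c_y:=s_y^{-1}cs_y$ and collecting powers of $c_{\pi(x)}$ and of $c_x$ (each a power of a single element, hence freely commuting among themselves), I expect the $x$-th primed factor to simplify to $A'_x=B_{\pi(x)}^{-1}A_xB_x$ where $B_y:=s_y^{-1}c^{\overline{t_y}}s_y\,t_y$; one checks $B_y\in\Gamma_K$ since $B_y|_K=c^{\overline{t_y}}c^{\overline{t_y}}|_K=1$. As everything now lies in $\Gamma_K$ and $\Gamma_K^{ab}$ is abelian and $\pi$ is a bijection of $X$, the product telescopes: $\prod_x A'_x|_{K^{ab}}=\big(\prod_x B_x|_{K^{ab}}\big)^{-1}\big(\prod_x A_x|_{K^{ab}}\big)\big(\prod_x B_x|_{K^{ab}}\big)=\prod_x A_x|_{K^{ab}}$, giving the independence.

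The step needing the most care — and the one I expect to be the main obstacle — is the identity $A'_x=B_{\pi(x)}^{-1}A_xB_x$: one must untangle the conjugations by $t_{\pi(x)}$ and $t_x$ introduced by $\beta_{s'}$ from the conjugations by $s_{\pi(x)}$ and $s_x$ built into the formula, keeping careful track of which powers of $c$ are powers of $c_{\pi(x)}$ and which of $c_x$ (so that they may be commuted past one another), and invoking $s_y^{-1}c^es_y|_K=c^e|_K$ together with the homomorphism property of $h\mapsto\overline h$ at each step. Spotting the correction term $B_y$ and verifying $B_y\in\Gamma_K$ is the crux; once that is in place, and given the observation that each factor already lands in $\Gamma_K^{ab}$, the remaining bookkeeping (and all of the $\Gamma_{\mathbb{Q}}$-restriction part) is formal.
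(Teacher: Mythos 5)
Your argument is correct, but note that the paper does not actually prove this proposition: it simply cites Propositions 2.1.3 and 2.1.7 of Nekov\'a\v{r}'s paper, so you have supplied a genuine verification where the text defers to a reference. Your two halves check out. The preliminary observation that each factor $A_x$ lies in $\Gamma_K$ (because $s_y^{-1}c^e s_y|_K = c^e|_K$ and $h_x|_K = c^{\overline{h_x}}|_K$, so $A_x|_K = c^{2a_x+2\overline{h_x}}|_K = 1$) is exactly what makes the unordered product meaningful, and your computation of the restriction to $\Gamma_{\mathbb{Q}}$ — substituting $h_x = s_{gx}^{-1}gs_x$ and identifying $w_{g\phi_x} = c^{a_x+\overline{h_x}}s_{gx}$ — is essentially the computation the paper itself carries out later in the proof of Theorem 3.4 (there run in the other direction, for a general $\alpha \in \Gamma_{\mathbb{Q}}\#\Gamma_F$). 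For the independence of $s$, your bookkeeping is right: with $s'=st$ one has $a'_x = a_x + \overline{t_x}$, $h'_x = t_{\pi(x)}^{-1}h_xt_x$, hence $a'_x+\overline{h'_x} = a_x+\overline{h_x}+\overline{t_{\pi(x)}}$, and direct substitution gives
\[ A'_x = t_{\pi(x)}^{-1}s_{\pi(x)}^{-1}c^{a_x+\overline{h_x}+\overline{t_{\pi(x)}}}s_{\pi(x)}h_xs_x^{-1}c^{a_x+\overline{t_x}}s_xt_x = B_{\pi(x)}^{-1}A_xB_x, \qquad B_y = s_y^{-1}c^{\overline{t_y}}s_yt_y \in \Gamma_K, \]
so the product telescopes in $\Gamma_K^{ab}$ as you say. (The identity is in fact immediate from expanding $B_{\pi(x)}^{-1}A_xB_x$ and using $c^2=1$; no commuting of $c_{\pi(x)}$ past $c_x$ is actually needed, so the step you flag as the main obstacle is less delicate than you fear.) The only caveat is that your write-up leaves that central identity as an expectation rather than a verification; once it is written out, the proof is complete and is a reasonable stand-in for the cited results of Nekov\'a\v{r}.
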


Using the language developed in \S2, we can give an alternative description of Nekov\'a\v{r}'s half-transfer maps.  As with the classical case, we choose a section \(\rho \mapsto w_{\rho} \) to the projection \(\Gamma_{\mathbb{Q}}  \twoheadrightarrow \Hom_{\Qalg}(K , \overline{\mathbb{Q}})\) such that \(w_{c \rho} = cw_{\rho}\) for all \(\rho \in \Sigma_K\).  We notice that Tate's original definition makes sense for any element \(\alpha\) of the set  \(\Gamma_{\mathbb{Q}} \# \Gamma_{K} :=  \Aut_{\rsets \Gamma_{K}}\Gamma_{\mathbb{Q}}\) of all automorphisms of \(\Gamma_{\mathbb{Q}}\) as a right \(\Gamma_K\)-set. Indeed any such \(\alpha\) induces a well-defined permutation \(\rho \mapsto \alpha \rho\) of \(\Sigma_K\) and for any \(\alpha \in \Gamma_{\mathbb{Q}} \# \Gamma_{K} \) the elements 
\[ h_{\rho}(\alpha) = w_{\alpha \rho}^{-1}\alpha(w_{\rho}) \]
lie in the subgroup \( \Gamma_K \subset \Gamma_{\mathbb{Q}} \). For any \(\alpha \in \Gamma_{\mathbb{Q}} \# \Gamma_{K} \) we define an element  
\[ F_{\Phi}(\alpha) = \prod_{\phi \in \Phi} h_{\phi}(\alpha)  \;\; \text{ mod } \Gamma_{K^{ab}} \]
and it is clear that if we compose with the inclusion \(\Gamma_{\mathbb{Q}} \hookrightarrow \Gamma_{\mathbb{Q}} \# \Gamma_{K}\) we recover Tate's half-transfer map \(F_{\Phi}\).  

\begin{proposition} If \(\alpha \in \Gamma_{\mathbb{Q}} \# \Gamma_{F} \subset \Gamma_{\mathbb{Q}} \# \Gamma_{K} \) then \({F_{\Phi}}(\alpha) \in \Gamma_K^{ab} \) is independent of the choice of coset representatives \(\rho \mapsto w_{\rho}\).
\begin{proof} Denote by \( F_{\Phi}^{\prime}(\alpha)\) the element of \(\Gamma_K^{ab} \) we would have obtained had we chosen another collection \(\{w_{\rho}^{\prime}\}\) of coset representatives. Then since each \(w_{\rho}^{\prime}\) is of the form \(w_{\rho}^{\prime} = w_{\rho}e_{\rho}\) for some \(e_{\rho} \in \Gamma_K\) we see that
\( h_{\rho}^{\prime}(\alpha) = w^{\prime -1}_{\alpha \rho}\alpha(w^{\prime}_{\rho}) = e_{\alpha \rho}^{-1} h_{\rho}^{\prime}(\alpha) e_{\rho} \)
and so
 \[ \frac{{F_{\Phi}^{\prime}}(\alpha)}{F_{\Phi}(\alpha)} =  \prod_{\phi \in \Phi} e_{\alpha\phi}^{-1}e_{\phi}  \;\; \text{ mod } \Gamma_{K^{ab}} \]

 The conditions on \(w_{\rho}\) and \(w_{\rho}^{\prime}\) force \(e_{c\rho} = e_{\rho}\) and so \(e_{\rho}\) only depends on \(\rho|_{F}\). The condition \(\alpha \in \Gamma_{\mathbb{Q}} \# \Gamma_{F} \subset \Gamma_{\mathbb{Q}} \# \Gamma_{K} \) ensures that \(\alpha\Phi = \{\alpha \phi: \phi \in \Phi\}\) is another CM type for \(K\).  It follows that the product on the right hand side is trivial and \(F_{\Phi}(\alpha) = F_{\Phi}^{\prime}(\alpha) \) as claimed. 
\end{proof}
\end{proposition}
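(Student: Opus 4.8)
The plan is to compare the element $F_{\Phi}(\alpha)$ attached to one admissible family $\{w_{\rho}\}$ of coset representatives with the element $F_{\Phi}^{\prime}(\alpha)$ attached to a second such family $\{w_{\rho}^{\prime}\}$, where ``admissible'' means a section of $\Gamma_{\mathbb{Q}}\twoheadrightarrow \Sigma_K$ with $w_{c\rho}=cw_{\rho}$. Writing $w_{\rho}^{\prime}=w_{\rho}e_{\rho}$ with $e_{\rho}\in\Gamma_K$, the first step is to track the effect on the factors $h_{\rho}(\alpha)=w_{\alpha\rho}^{-1}\alpha(w_{\rho})$. Using $\alpha(gh)=\alpha(g)h$ for $h\in\Gamma_K$ (valid since $\Gamma_K\subset\Gamma_F$ and $\alpha\in\Gamma_{\mathbb{Q}}\#\Gamma_F\subset\Gamma_{\mathbb{Q}}\#\Gamma_K$) one computes $h_{\rho}^{\prime}(\alpha)=e_{\alpha\rho}^{-1}h_{\rho}(\alpha)e_{\rho}$, noting that the permutation $\rho\mapsto\alpha\rho$ of $\Sigma_K$ is intrinsic and so unchanged. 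Since $\Gamma_K^{ab}$ is abelian the order of the factors in $\prod_{\phi\in\Phi}h_{\phi}^{\prime}(\alpha)$ is irrelevant, and one obtains
\[ \frac{F_{\Phi}^{\prime}(\alpha)}{F_{\Phi}(\alpha)} \;=\; \prod_{\phi\in\Phi}e_{\alpha\phi}^{-1}e_{\phi} \quad\text{in }\Gamma_K^{ab}. \]

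Next I would feed in the two normalisations. From $w_{c\rho}=cw_{\rho}$ and $w_{c\rho}^{\prime}=cw_{\rho}^{\prime}$ one deduces $e_{c\rho}=e_{\rho}$; since $c$ generates $\Gal(K/F)$, the two elements of $\Sigma_K$ lying above a fixed point of $\Sigma_F$ under the canonical projection $\bar\pi:\Sigma_K\to\Sigma_F$ are precisely $\rho$ and $c\rho$, so $e_{\rho}$ depends only on $\bar\pi(\rho)$; write $e_{\rho}=\varepsilon_{\bar\pi(\rho)}$. Because $\Phi$ is a CM type, $\bar\pi$ restricts to a bijection $\Phi\xrightarrow{\sim}\Sigma_F$, so as $\phi$ runs over $\Phi$ the point $\bar\pi(\phi)$ runs over $\Sigma_F$ without repetition.

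The key input — and the step I expect to need the most care — is to control $\bar\pi(\alpha\phi)$, and this is exactly where the hypothesis $\alpha\in\Gamma_{\mathbb{Q}}\#\Gamma_F$ (not merely $\alpha\in\Gamma_{\mathbb{Q}}\#\Gamma_K$) enters. Being an automorphism of $\Gamma_{\mathbb{Q}}$ as a right $\Gamma_F$-set, $\alpha$ descends to a permutation $\pi_F$ of $\Sigma_F$, and from the coset descriptions ($g\Gamma_K\mapsto\alpha(g)\Gamma_K$ on $\Sigma_K$, $g\Gamma_F\mapsto\alpha(g)\Gamma_F$ on $\Sigma_F$) one reads off $\bar\pi(\alpha\phi)=\pi_F(\bar\pi(\phi))$; in particular $\alpha\Phi$ is again a CM type for $K$. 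Hence $\phi\mapsto\bar\pi(\alpha\phi)$ also runs over $\Sigma_F$ without repetition, and in the abelian group $\Gamma_K^{ab}$
\[ \prod_{\phi\in\Phi}e_{\alpha\phi}^{-1}e_{\phi} \;=\; \Big(\prod_{y\in\Sigma_F}\varepsilon_y\Big)^{-1}\Big(\prod_{x\in\Sigma_F}\varepsilon_x\Big) \;=\; 1, \]
so that $F_{\Phi}^{\prime}(\alpha)=F_{\Phi}(\alpha)$. If one prefers to argue through the wreath-product picture of \S2 via $\beta_s$, the identities $\bar\pi(\alpha\phi)=\pi_F(\bar\pi(\phi))$ and ``$\alpha\Phi$ is a CM type'' fall out of the explicit formula for $\beta_s$ instead; the coset-theoretic route above seems cleanest.
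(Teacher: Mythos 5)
Your proposal is correct and follows essentially the same route as the paper: compute $h_{\rho}^{\prime}(\alpha)=e_{\alpha\rho}^{-1}h_{\rho}(\alpha)e_{\rho}$, reduce to the product $\prod_{\phi\in\Phi}e_{\alpha\phi}^{-1}e_{\phi}$, and kill it using that $e_{\rho}$ depends only on $\rho|_F$ together with the fact that $\alpha\Phi$ is again a CM type. You simply spell out more explicitly (via $\bar\pi(\alpha\phi)=\pi_F(\bar\pi(\phi))$ and the telescoping over $\Sigma_F$) the step the paper leaves as ``it follows that the product is trivial.''
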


Exactly as with Tate's half-transfer maps, we have the following compatibilities:

\begin{proposition}  Let \(K \subset \overline{\mathbb{Q}}\) be a CM field and let \(\Phi \subset \Sigma_K =  \Hom_{\Qalg} (K, \mathbb{C})\) be a CM type.

  (i) Let \(K^{\prime} \subset \overline{\mathbb{Q}}\) be a CM field containing \(K\), \(F^{\prime} = K^{\prime \; +}\) and \(\Phi^{\prime}\) the CM type of \(K^{\prime}\) induced from \(\Phi\) on \(K\), i.e. \(
 \Phi^{\prime} = \{ \rho^{\prime} \in \Sigma_{K^{\prime}} : \rho^{\prime}|_K \in \Phi \} \).
The maps \(F_{\Phi}\) and \(F_{\Phi^{\prime}}\) fit into a commutative diagram:
\[ \begin{CD} \Gamma_{\mathbb{Q}} \# \Gamma_{F}  @> F_{\Phi} >>  \Gamma_K^{ab} \\
@VVV @VV {\Ver}_{K^{\prime}/K} V \\
\Gamma_{\mathbb{Q}} \# \Gamma_{F^{\prime}} @> F_{\Phi} >>   \Gamma_{K^{\prime}}^{ab} \end{CD} \] 
where \( {\Ver}_{K^{\prime}/K} : \Gamma_K^{ab} \rightarrow  \Gamma_{K^{\prime}}^{ab}\) denotes the transfer (Verlagerung) map coming from \(\Gamma_{K^{\prime}} \subset 
\Gamma_K\)

(ii) Let \(u \in \Gamma_{\mathbb{Q}}\), \(K^{\prime} = u(K) \subset \overline{\mathbb{Q}}\), \(F^{\prime} = K^{\prime \; +} = u(F)\) and  \(\Phi^{\prime} \subset \Sigma_{K^{\prime}}\) the CM type \(\Phi^{\prime} = \Phi u^{-1} = \{ \rho^{\prime} \in \Sigma_{K^{\prime}} : \rho^{\prime} \circ u|_K \in \Phi \} \). The maps \(F_{\Phi}\) and \(F_{\Phi^{\prime}}\) fit into a commutative diagram:
\[ \begin{CD}  \Gamma_{\mathbb{Q}} \# \Gamma_{F}  @> F_{\Phi} >>  \Gamma_K^{ab} \\
@V [u] VV @VV u_{\ast} V \\
\Gamma_{\mathbb{Q}} \# \Gamma_{F^{\prime}}  @>{F_{\Phi^{\prime}}} >>   \Gamma_{K^{\prime}}^{ab}  \end{CD} \] 
where \( u_{\ast} : \Gamma_K^{ab} \rightarrow \Gamma_{K^{\prime}}^{ab}  \) is the map induced by the map \(\Gamma_K \rightarrow \Gamma_{K^{\prime}}\) given by  \( g \mapsto u g  u^{-1} \).   

\begin{proof}  Both of these are straightforward calculations using the computations in Proposition 2.3 and Proposition 2.4 respectively. 
\end{proof}
\end{proposition}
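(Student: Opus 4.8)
The plan is to deduce both compatibilities from the explicit wreath-product descriptions of Propositions 2.3 and 2.4, taken with $G = \Gamma_{\mathbb{Q}}$ and $H = \Gamma_K$, after making suitably adapted choices of coset representatives. In each case, once such a choice is fixed, the half-transfer map $F_{\Phi^{\prime}}$ attached to the larger CM field is computed by \emph{the same} coset data, transported along Proposition 2.3 or 2.4, so the assertion reduces to a formal manipulation of a product inside an abelian group.

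For part (i), recall that $F \subset F^{\prime}$, so $\Gamma_{\mathbb{Q}} \# \Gamma_F \subset \Gamma_{\mathbb{Q}} \# \Gamma_K \subset \Gamma_{\mathbb{Q}} \# \Gamma_{K^{\prime}}$ and also $\Gamma_{\mathbb{Q}} \# \Gamma_F \subset \Gamma_{\mathbb{Q}} \# \Gamma_{F^{\prime}}$. I would begin with a section $\rho \mapsto w_{\rho}$ of $\Gamma_{\mathbb{Q}} \twoheadrightarrow \Sigma_K$ satisfying $w_{c\rho} = c w_{\rho}$, which computes $F_{\Phi}$, together with coset representatives $(v_{\psi})_{\psi}$ for $\Gamma_{K^{\prime}}$ in $\Gamma_K$, indexed by $\psi \in \Gamma_K / \Gamma_{K^{\prime}} = \Hom_K(K^{\prime}, \overline{\mathbb{Q}})$; then $w^{\prime}_{\rho^{\prime}} = w_{\rho} v_{\psi}$ (for $\rho^{\prime}$ corresponding to the pair $(\rho, \psi)$) is a section of $\Gamma_{\mathbb{Q}} \twoheadrightarrow \Sigma_{K^{\prime}}$. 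The first step is to observe that this refined section again satisfies the normalisation $w^{\prime}_{c\rho^{\prime}} = c w^{\prime}_{\rho^{\prime}}$: the complex conjugate $c\rho^{\prime}$ corresponds to the pair $(c\rho, \psi)$, so $w^{\prime}_{c\rho^{\prime}} = w_{c\rho} v_{\psi} = (c w_{\rho}) v_{\psi}$. By Proposition 3.2 (applicable since $\alpha \in \Gamma_{\mathbb{Q}} \# \Gamma_{F^{\prime}}$) this section may be used to evaluate $F_{\Phi^{\prime}}(\alpha)$. Proposition 2.3 then identifies the $\Gamma_{K^{\prime}}$-component of $\alpha$ at $(\phi, \psi)$ as $v_{h_{\phi}(\alpha)\psi}^{-1}\, h_{\phi}(\alpha)\, v_{\psi}$, where $h_{\phi}(\alpha) = w_{\alpha\phi}^{-1}\alpha(w_{\phi}) \in \Gamma_K$; and since $\Phi^{\prime} = \{\rho^{\prime} : \rho^{\prime}|_K \in \Phi\}$ is exactly the preimage of $\Phi$ under $\Sigma_{K^{\prime}} \to \Sigma_K$, we obtain
\[ F_{\Phi^{\prime}}(\alpha) = \prod_{\phi \in \Phi} \prod_{\psi} v_{h_{\phi}(\alpha)\psi}^{-1}\, h_{\phi}(\alpha)\, v_{\psi} = \prod_{\phi \in \Phi} \Ver_{K^{\prime}/K}\bigl(h_{\phi}(\alpha)\bigr) = \Ver_{K^{\prime}/K}\bigl(F_{\Phi}(\alpha)\bigr) \]
in $\Gamma_{K^{\prime}}^{ab}$, using the standard coset formula for the transfer together with the fact that $\Ver_{K^{\prime}/K}$ is a homomorphism.

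For part (ii), starting from $w_{\rho}$ as above, set $w^{\prime}_{\rho^{\prime}} = w_{\rho} u^{-1}$, which by Proposition 2.4 is a section of $\Gamma_{\mathbb{Q}} \twoheadrightarrow \Sigma_{K^{\prime}}$ along the bijection $\rho \mapsto \rho^{\prime} = \rho \circ u^{-1}$. Since $\overline{\rho^{\prime}} = c \circ \rho \circ u^{-1} = (c\rho)^{\prime}$ one has $w^{\prime}_{c\rho^{\prime}} = w_{c\rho} u^{-1} = c w_{\rho} u^{-1} = c w^{\prime}_{\rho^{\prime}}$, so (using that $[u]\alpha \in \Gamma_{\mathbb{Q}} \# \Gamma_{F^{\prime}}$ and Proposition 3.2) this section computes $F_{\Phi^{\prime}}([u]\alpha)$. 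Proposition 2.4 gives $h_{\rho^{\prime}}([u]\alpha) = u\, h_{\rho}(\alpha)\, u^{-1}$ and $([u]\alpha)\rho^{\prime} = (\alpha\rho)^{\prime}$; as $\Phi^{\prime} = \Phi u^{-1}$ corresponds to $\Phi$ under the bijection, we get
\[ F_{\Phi^{\prime}}([u]\alpha) = \prod_{\phi \in \Phi} u\, h_{\phi}(\alpha)\, u^{-1} = u \Bigl( \prod_{\phi \in \Phi} h_{\phi}(\alpha) \Bigr) u^{-1} = u_{\ast}\bigl(F_{\Phi}(\alpha)\bigr), \]
the last equality being immediate from the definition of $u_{\ast}$.

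The two displayed computations are routine once the representatives are set up, so there is no deep obstacle; the parts that I expect to demand the most care are the two normalisation checks — it is exactly these that let Proposition 3.2 legitimise the use of the transported coset data — and, in part (i), the clean identification of $\Phi^{\prime}$ with the preimage of $\Phi$, which is what turns the iterated product over $\Phi^{\prime}$ into $\Ver_{K^{\prime}/K}$ applied to $F_{\Phi}(\alpha)$.
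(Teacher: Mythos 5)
Your proposal is correct and follows exactly the route the paper indicates: it instantiates Propositions 2.3 and 2.4 with $G=\Gamma_{\mathbb{Q}}$, $H=\Gamma_K$, $H'=\Gamma_{K'}$ (resp. $H'=u\Gamma_K u^{-1}$), verifies that the transported coset representatives still satisfy the normalisation $w'_{c\rho'}=cw'_{\rho'}$, and reduces the claims to the coset formula for the transfer and the definition of $u_{\ast}$. The paper leaves these as "straightforward calculations", and your write-up supplies precisely the details it has in mind.
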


In \S2 we defined a canonical isomorphism \( \Aut_F(F \otimes_{\mathbb{Q}} \overline{\mathbb{Q}}) \xrightarrow{\sim}   \Gamma_{\mathbb{Q}} \# \Gamma_{F}\). The following theorem tells us that Nekov\'a\v{r}'s half-transfer maps \(\widetilde{F_{\Phi}}\) correspond under this isomorphism to the very natural maps \(F_{\Phi}\) defined above. 

\begin{theorem} Let \(\alpha_g\) denote the image in \(\Gamma_{\mathbb{Q}} \# \Gamma_{F}\) of \(g \in \Aut_F(F \otimes_{\mathbb{Q}} \overline{\mathbb{Q}}) \). Then 
\( \widetilde{F_{\Phi}}(g) = F_{\Phi}(\alpha_g) \in \Gamma_K^{ab}\) for all \(g \in \Aut_F(F \otimes_{\mathbb{Q}} \overline{\mathbb{Q}}) \).  
 \begin{proof} Let us make choices as in the definition of \(\widetilde{F_{\Phi}} \). In particular we have chosen a section \(s = (s_x)_{x \in X}\) to the projection \(\Gamma_{\mathbb{Q}} \twoheadrightarrow X = \Sigma_F \) and this gives us a section \(\rho \mapsto w_{\rho} \) to \(\Gamma_{\mathbb{Q}} \twoheadrightarrow \Sigma_K = \Hom_{\Qalg}(K ,\overline{\mathbb{Q}}) \) such that \(w_{c\rho} = cw_{\rho}\) by setting
\( w_{c^b s_x|_K} = c^{b} s_x\). We identify the CM type \(\Phi\) with \(a = (a_x)_{x\in X} \in (\mathbb{Z}/ 2\mathbb{Z})^X\). 
To ease the notation write \(\alpha\) for \(\alpha_g\) and let us write \( \rho_s(\alpha) = (\pi, (h_x)_{x \in X}) = \beta_s(g)\).  Recall this means that \(\pi\) is the permutation of \(X = \Sigma_F\) induced by \(\alpha\) and we have 
\(h_x = s_{\pi(x)}^{-1}\alpha(s_x) \).
 By definition 
\[ F_{\Phi}(\alpha) = \prod_{\phi \in \Phi} w_{\alpha \phi}^{-1} \alpha(w_{\phi}) = \prod_{x \in X} w_{\alpha \phi_x}^{-1} \alpha(w_{\phi_x})  \;\; \text{ mod } \Gamma_{K^{ab}}\]
where \(\phi_x = c^{a_x}s_x|_K\) for all \(x \in X\). We can make the following computations
\begin{itemize} 
\item \(\alpha(w_{\phi_x}) = s_{\pi(x)}h_xs_x^{-1}c^{a_x}s_x \in \Gamma_{\mathbb{Q}} \)
\item \(w_{\alpha \phi_x} = c^{a_x + \overline{h_x}} s_{\pi(x)} \in \Gamma_{\mathbb{Q}} \) 
\end{itemize} 
To see this recall that by definition \(h_x = s_{\pi(x)}^{-1}\alpha(s_x)\) so the right hand side of the first claim is just
\[ s_{\pi(x)}h_xs_x^{-1}c^{a_x}s_x = \alpha(s_x)s_x^{-1}c^{a_x}s_x\] 
But then as \(K\) is CM we have \(s_x^{-1}c^{a_x}s_x|_K = c^{a_x}|_K\) and in particular \(s_x^{-1}c^{a_x}s_x|_F = 1\), i.e. \(s_x^{-1}c^{a_x}s_x \in \Gamma_F\). Since \(\alpha \in  \Gamma_{\mathbb{Q}} \# \Gamma_{F} \) we can take this inside the brackets to get
 \[ \alpha(s_xs_x^{-1}c^{a_x}s_x) = \alpha(c^{a_x}s_x) = \alpha(w_{\phi_x}) \] 
For the second claim, the fact that \(\alpha(w_{\phi_x}) = s_{\pi(x)}h_xs_x^{-1}c^{a_x}s_x\) implies that \(\alpha(w_{\phi_x}) |_K = s_{\pi(x)}h_xc^{a_x}|_K = c^{a_x +  \overline{h_x}} s_{\pi(x)} |_K\) (using again the fact that \(K\) is CM). This shows that \(w_{\alpha \phi_x} = c^{a_x + \overline{h_x}} s_{\pi(x)}|_K\) and so by definition \(w_{\alpha \phi_x} = c^{a_x + \overline{h_x}} s_{\pi(x)}\). 

With this established, we compute straight from the definition that
\[F_{\Phi}(\alpha) = \prod_{x \in X} w_{\alpha \phi_x}^{-1} \alpha(w_{\phi_x})   \;\; \text{ mod } \Gamma_{K^{ab}}  = \prod_{x \in X} s_{\pi(x)}^{-1} c^{a_x + \overline{h_x}}s_{\pi(x)}h_xs_x^{-1}c^{a_x}s_x|_{K^{ab}} = \widetilde{F_{\Phi}}(g)   \]
as claimed. 
\end{proof}
\end{theorem}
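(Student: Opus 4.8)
The plan is to unwind both sides of the claimed identity $\widetilde{F_{\Phi}}(g) = F_{\Phi}(\alpha_g)$ explicitly using the section $s$ built into the definition of $\widetilde{F_\Phi}$, and to check that the two resulting products over $X = \Sigma_F$ agree term by term in $\Gamma_K^{ab}$. First I would fix a section $s = (s_x)_{x \in X}$ of $\Gamma_{\mathbb{Q}} \twoheadrightarrow \Sigma_F$ and use it to produce the compatible section $\rho \mapsto w_\rho$ of $\Gamma_{\mathbb{Q}} \twoheadrightarrow \Sigma_K$ by $w_{c^b s_x|_K} = c^b s_x$, exactly as in \S3. Identify $\Phi$ with $a = (a_x)_{x \in X} \in (\mathbb{Z}/2\mathbb{Z})^X$ so that $\phi_x = c^{a_x} s_x|_K$, and write $\rho_s(\alpha_g) = \beta_s(g) = (\pi, (h_x)_{x \in X}) \in S_X \ltimes \Gamma_F^X$, so that $h_x = s_{\pi(x)}^{-1}\alpha_g(s_x)$ by the definition of $\rho_s$.

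The heart of the argument is two local computations, valid for each $x \in X$: first, $\alpha_g(w_{\phi_x}) = s_{\pi(x)} h_x s_x^{-1} c^{a_x} s_x$ in $\Gamma_{\mathbb{Q}}$, and second, $w_{\alpha_g \phi_x} = c^{a_x + \overline{h_x}} s_{\pi(x)}$ in $\Gamma_{\mathbb{Q}}$. For the first, I would rewrite the right-hand side as $\alpha_g(s_x)\, s_x^{-1} c^{a_x} s_x$ using the formula for $h_x$, then observe that because $K$ is CM we have $s_x^{-1} c^{a_x} s_x|_K = c^{a_x}|_K$ and hence $s_x^{-1} c^{a_x} s_x \in \Gamma_F$; the defining property of $\alpha_g \in \Gamma_{\mathbb{Q}} \# \Gamma_F$ (namely $\alpha_g(uh) = \alpha_g(u)h$ for $h \in \Gamma_F$) then lets me absorb this factor inside $\alpha_g$, giving $\alpha_g(c^{a_x} s_x) = \alpha_g(w_{\phi_x})$. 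For the second, restrict the first identity to $K$: $\alpha_g(w_{\phi_x})|_K = s_{\pi(x)} h_x c^{a_x}|_K = c^{a_x + \overline{h_x}} s_{\pi(x)}|_K$, again using that $K$ is CM so $h_x$ and $c$ commute modulo $\Gamma_K$ and $\overline{h_x}$ records the action of $h_x$ on $K/F$; this pins down $w_{\alpha_g \phi_x}$ by the defining recipe for the $w_\rho$.

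Granting these two identities, I would substitute them into $F_{\Phi}(\alpha_g) = \prod_{x \in X} w_{\alpha_g \phi_x}^{-1} \alpha_g(w_{\phi_x}) \bmod \Gamma_{K^{ab}}$ to obtain $\prod_{x \in X} s_{\pi(x)}^{-1} c^{a_x + \overline{h_x}} s_{\pi(x)} h_x s_x^{-1} c^{a_x} s_x|_{K^{ab}}$, which is precisely ${}_s\widetilde{F_\Phi}(\pi, (h_x)_x) = \widetilde{F_\Phi}(g)$ by Nekov\'a\v{r}'s formula in Proposition 3.1. The independence of the choice of $s$ on both sides is already known (for $\widetilde{F_\Phi}$ from Proposition 3.1, for $F_\Phi(\alpha_g)$ from Proposition 3.2), so no separate verification is needed there. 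I expect the main obstacle to be purely bookkeeping: being careful that the permutation $\pi$ of $X$ induced by $\alpha_g$ agrees with the permutation of $\Sigma_K$ induced by $\alpha_g$ viewed in $\Gamma_{\mathbb{Q}} \# \Gamma_K$ when restricted along the $\Gamma_{\mathbb{Q}} \# \Gamma_F \subset \Gamma_{\mathbb{Q}} \# \Gamma_K$ inclusion (so that $\alpha_g \phi_x$ really does have underlying embedding $s_{\pi(x)}|_K$ twisted by $c^{a_x + \overline{h_x}}$), and keeping the CM-conjugation relations $s_x^{-1} c s_x \equiv c \pmod{\Gamma_K}$ straight throughout. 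None of these steps is deep; the only genuine input is the $\Gamma_F$-equivariance of $\alpha_g$, which is what makes the factor $s_x^{-1} c^{a_x} s_x$ pass through.
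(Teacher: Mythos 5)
Your proposal is correct and follows essentially the same route as the paper's proof: the same choice of section $s$ and derived coset representatives $w_{c^b s_x|_K} = c^b s_x$, the same two local identities $\alpha_g(w_{\phi_x}) = s_{\pi(x)} h_x s_x^{-1} c^{a_x} s_x$ and $w_{\alpha_g \phi_x} = c^{a_x + \overline{h_x}} s_{\pi(x)}$ (justified by the CM property of $K$ and the right-$\Gamma_F$-equivariance of $\alpha_g$), and the same term-by-term comparison with Nekov\'a\v{r}'s formula. No gaps.
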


\section{The plectic Serre group} 

Let \(E\subset \overline{\mathbb{Q}}\) be any CM field and for each \(\sigma \in \Hom_{\Qalg}(E, \overline{\mathbb{Q}})\) let \([\sigma]\) denote the associated character of the torus \(T_E = \Res_{E/\mathbb{Q}} \mathbb{G}_m\). These characters form a \(\mathbb{Z}\)-basis for the character group \(X^{\ast}(T_E)\) and the arithmetic action of \(\tau \in \Gal(\overline{\mathbb{Q}} / {\mathbb{Q}})\) is given in terms of this basis by \([\sigma] \mapsto [\tau \sigma]\). The level \(E\) Serre group is the quotient  \(\mathcal{S}^E\) of \(T_E = \Res_{E/\mathbb{Q}} \mathbb{G}_m\) with character group \[ X^{\ast}(\mathcal{S}^E) = \{ \sum_{\sigma} n_{\sigma}[\sigma] \; : \; \text{there is some } w \in \mathbb{Z} \mbox{ such that } n_{\sigma} + n_{c\sigma} = w \text{ for all } \sigma \;   \} \subset X^{\ast}(T_E)  \]
When \(E\) is Galois over \(\mathbb{Q}\) the action of the Galois group \(\Gamma_{E/\mathbb{Q}}\) on \(E^{\times}\) gives rise to an algebraic action \(\Gamma_{E/\mathbb{Q}} \hookrightarrow \Aut({T}_E)\). The algebraic action of an element \(\tau \in \Gamma_{E/\mathbb{Q}}\) can be described in terms of characters by \([\sigma] \mapsto [ \sigma \circ \tau ]\) and induces a well-defined action of \(\Gamma_{E/\mathbb{Q}}\) on the quotient \(\mathcal{S}^E\). As discussed in the introduction, for such \(E\) Langlands' has constructed a particular extension 
 \[ 1 \rightarrow \mathcal{S}^E \rightarrow \mathcal{T}^E \rightarrow \Gamma_{E^{ab} / \mathbb{Q}} \rightarrow 1\] 
 of the profinite group \(\Gamma_{E^{ab} / \mathbb{Q}}\) by the torus \(\mathcal{S}^E\). The action of \(\Gamma_{E^{ab} / \mathbb{Q}}\) on \(\mathcal{S}^E\) implicit in this level \(E
\) Taniyama extension is given by precomposing this algebraic action of \(\Gamma_{E/\mathbb{Q}}\) defined above with the natural projection \(\Gamma_{E^{ab} / \mathbb{Q}} \twoheadrightarrow \Gamma_{E / \mathbb{Q}}\).

Now let \(F\) be any totally real number field and suppose \(E\) is a CM number field which is big enough to split \(F\). In this case we can identify the set \(\Sigma_F =  \Hom_{\Qalg}(F,\overline{\mathbb{Q}})\) with \(\Hom_{\Qalg}(F,E)\). 

  \begin{definition}  The \(F\)-plectic Serre group at level \(E\) is the product group \(\mathcal{S}^E_F := (\mathcal{S}^E)^{\Sigma_F}\), i.e. the quotient of \(T_E^{\Sigma_F}\) with character group 
 \[  X^{\ast}(\mathcal{S}_F^E) = \{ \sum_{\sigma : E\hookrightarrow \overline{\mathbb{Q}}} n_{j, \sigma}[j, \sigma] \; : \; \text{there is some } \underline{w} = (w_j) \in \mathbb{Z}^{\Sigma_F} \mbox{ such that } n_{j, \sigma} + n_{j, c\sigma} = w_j \;   \}  \]
 where \([j, \sigma]\) denotes the character of \(T_E^{\Sigma_F}\) obtained by first projecting onto the \(j\) component and then applying the character \([\sigma]\) of \(T_E\). 
 \end{definition}
 
 When \(E\) is Galois over \(\mathbb{Q}\), we can define an action of the \(F\)-plectic Galois group \(\Aut_F(F\otimes_{\mathbb{Q}}E)\) on \(\mathcal{S}_F^E\). First note that the natural isomorphism 
\(  F \otimes_{\mathbb{Q}} E \xrightarrow{\sim} E^{\Sigma_F} \) of \(\mathbb{Q}\)-algebras
given by \(f \otimes e \mapsto (j(f)e)_{j \in {\Sigma_F}}\) induces an isomorphism of \(\mathbb{Q}\)-tori
 \( T_{F \otimes_{\mathbb{Q}} E} := \Res_{F \otimes_{\mathbb{Q}} E/ \mathbb{Q}} \mathbb{G}_m \xrightarrow{\sim} T_E^{\Sigma_F} \), and in this way we can view \(\mathcal{S}_F^E\) as a quotient of \(T_{F \otimes_{\mathbb{Q}} E} \).  We have an isomorphism 
\[(F \otimes_{\mathbb{Q}} E \otimes_{\mathbb{Q}} \overline{\mathbb{Q}} )^{\times} \xrightarrow{\sim} \prod_{j \otimes \sigma} \overline{ \mathbb{Q}}^{\times} \]
where here \(j \otimes \sigma\) signifies that the action of \(F\) is via \(j : F \hookrightarrow \overline{\mathbb{Q}} \) and then action of \(E\) is via \(\sigma : E \hookrightarrow  \overline{\mathbb{Q}} \). If we let \([j \otimes \sigma]\) denote the projection onto the \(j \otimes \sigma)\) component of the above decomposition then the arithmetic action of \(\tau \in \Gal(\overline{\mathbb{Q}} / {\mathbb{Q}})\) is given in terms of this character basis by \([j \otimes \sigma] \mapsto [\tau \circ j \otimes \tau \circ \sigma]\). In this way we identify the characters \([j,\sigma]\) of \(T_E^{\Sigma_F}\) and \([\sigma \circ j \otimes \sigma]\) of \(T_{F \otimes_{\mathbb{Q}} E}\).

 Now we make use of the language developed in \S2. Recall that an embedding \(j : F \hookrightarrow E\) defines a map \(j :  \Aut_F(F\otimes_{\mathbb{Q}} E) \hookrightarrow \Aut_E(E\otimes_{\mathbb{Q}} E) = S_{\Gamma_{E/\mathbb{Q}}}\). For \(g \in \Aut_{F}(F \otimes_{\mathbb{Q}} E)\) define a homomorphism 
\( g^{\ast} : X^{\ast}(T_{F\otimes_{\mathbb{Q}}E}) \rightarrow X^{\ast}(T_{F\otimes_{\mathbb{Q}}E}) \)
by 
\( [ j \otimes \sigma] \mapsto [j \otimes \sigma j(g)] \)
where \[\sigma j(g) := (j(g)^{-1}(\sigma^{-1}))^{-1} \in \Gamma_{E/\mathbb{Q}} \] We will try to be careful to to distinguish this from the element \(\sigma \circ j (g) \in S_{\Gamma_{E/\mathbb{Q}}} \). That this is equivariant for the arithmetic action of \(\Gamma_{\mathbb{Q}}\) on \(X^{\ast}(T_{F\otimes_{\mathbb{Q}}E})\) follows from Theorem 2.5 which implies that if \(j^{\prime} = \tau \circ j \) then \(j^{\prime}(g)(\sigma) = j(g)(\sigma \tau)\cdot \tau^{-1}\). 

It's easy to check this defines an action of \(\Aut_{F}(F \otimes_{\mathbb{Q}} E) \) on \(T_{F\otimes_{\mathbb{Q}}E}\) (and hence on  \(T_{E}^{\Sigma_F}\)), which we denote on points by \(t \mapsto g \star t\) so that on \(\overline{\mathbb{Q}}\)-points we have \( g^{\ast}\chi(t) = \chi(g \star t) \). The following proposition implies that this action stabilises \(X^{\ast}(\mathcal{S}_F^E) \subset  X^{\ast}(T_{E}^{\Sigma_F}) = X^{\ast}(T_{F\otimes_{\mathbb{Q}}E})\) and thus induces a well-defined algebraic action of \(\Aut_F(F\otimes_{\mathbb{Q}} E)\)  on \(\mathcal{S}_F^E\). 
 

\begin{proposition} Let \(j_0 : F \hookrightarrow E\) and let \(\chi\) be a character of \(T_{E}^{\Sigma_F}\) pulled back from the character of \(T_{E}\) attached to a CM type \(\Phi \subset \Gamma_{E/\mathbb{Q}} \) via the \(j_0^{th}\) projection \(T_{E}^{\Sigma_F} \twoheadrightarrow T_E\). Then if \(g \in \Aut_{F}(F \otimes_{\mathbb{Q}} E)\) maps \(j_0\) to \(j_0^{\prime} : F \hookrightarrow E\) then the character \( g^{\ast} \chi \in X^{\ast}(T_{E}^{\Sigma_F})\) is the pull back of a character of \(T_{E}\) attached to a (possibly different) CM type \(\Phi^{\prime} \subset \Gamma_{E/\mathbb{Q}}\) via the \(j_0^{\prime \; th}\) projection \(T_{E}^{\Sigma_F} \twoheadrightarrow T_E\).
\begin{proof} Let's write \(\chi = \sum_{j, \sigma} a_{j,\sigma} [j, \sigma] = \sum_{j, \sigma} b_{j \otimes \sigma}[ j \otimes \sigma ] \) where \(b_{j \otimes \sigma} = a_{\sigma^{-1}j, \sigma}\).  We will also write 
 \(g^{\ast} \chi = \sum_{j, \sigma} c_{j,\sigma} [j, \sigma] = d_{j \otimes \sigma}[ j \otimes \sigma ] \)
 and we are considering the case where \(a_{j, \sigma} \in \{0,1\}\) and \(a_{j, \sigma} = 1\) if and only if we have both \(j = j_0\) and \(\sigma \in \Phi = \{\sigma_1, \dots, \sigma_d\}\) for a CM type \(\Phi \subset \Gamma_{E/\mathbb{Q}} \).  Pictorially we can represent this by a matrix
\[\bordermatrix{
~ & j_0 & &  j_1  & & j_d  \cr
\sigma_1 &  1 & \dots & 0 & \dots & 0\cr
\vdots   & \vdots & \dots  & 0 & \dots  & 0 \cr
\sigma_d  & 1  & \dots  & 0 & \dots &  0\cr
c \sigma_1  &0 & \dots &  0 &  \dots & 0\cr
  & \vdots &  \dots & 0  &\ddots & 0 \cr
c\sigma_d   &0 &\dots & 0 & \dots  & 0
 }_{j, \sigma} = \bordermatrix{
~ & j_0 & &  j_1  & & j_d  \cr
\sigma_1 &  0 & \dots & 1 & \dots & 0\cr
\vdots   & \vdots & \dots  & 0 & \dots  & 0 \cr
\sigma_d  & 0  & \dots  & 0 & \dots &  1\cr
c \sigma_1   &0 & \dots &  0 &  \dots & 0\cr
  & \vdots &  \dots & 0  &\ddots & 0 \cr
c \sigma_d  &0 &\dots & 0 & \dots  & 0
 }_{j \otimes \sigma} \]
 where here for each \(r = 1, \dots, d\) we let \(j_r = \sigma_r \circ j_0\). We can translate between the two matrices by an appropriate permutation of each row (i.e. \(j \mapsto \sigma \circ j\) in row \(\sigma\)).   For any \(g \in \Aut_F(F\otimes_{\mathbb{Q}} E)\) the permutation \(j_r(g)\) lies in the subgroup \(  \Gamma_{E / \mathbb{Q}} \# \Gamma_{E/j_r(F)} \subset S_{ \Gamma_{E/\mathbb{Q}}}\) 
and so maps 
 \[ \{ \tau \in \Gamma_{E/\mathbb{Q}}  : \tau \circ j_r = j_0 \} \mapsto  \{ \tau \in \Gamma_{E/\mathbb{Q}}  : \tau \circ j_r = j_0^{\prime}\} \]
 where \(j_0^{\prime}= gj_0\). Hence the right action defined by \(\sigma \mapsto (j_r(g)^{-1} (\sigma^{-1}))^{-1}\) sends 
  \[ \{ \sigma \in \Gamma_{E/\mathbb{Q}} : \sigma \circ j_0= j_r\} \mapsto  \{ \sigma \in \Gamma_{E/\mathbb{Q}}  : \sigma \circ j_0^{\prime} = j_r\} \]
  Since the action of \(g\) on \(X^{\ast}(T_{F\otimes E})\) acts on the \(( j_r \otimes -)\) column above by the right action of \(j_r(g)\) we deduce that  \(g^{\ast} \chi\) is of the form 
  \[\bordermatrix{
~ & & &  j_0^{\prime} & &   \cr
\sigma_1 &  0 & \dots & \ast & \dots & 0\cr
\vdots   & 0 & \dots & \ast & \dots & 0\cr
\sigma_d  & 0 & \dots & \ast & \dots & 0\cr
c\sigma_1  &0 & \dots & \ast & \dots & 0\cr
  & 0 & \dots & \ast & \ddots & 0\cr
c\sigma_d   & 0 & \dots & \ast & \dots & 0\cr
 }_{j, \sigma}\]
 More precisely we have shown that for \(\sigma \in \Gamma_{E/\mathbb{Q}} \) such that \(\sigma \circ j_0^{\prime} = j\) we have 
 \(c_{j_0^{\prime}, \sigma} = a_{j_0, \tau} \)
 where \(\tau \in \Gamma_{E/\mathbb{Q}} \) is such that \(\tau j(g) = \sigma\). Since \(F\) is totally real, this implies that  \(\Phi^{\prime} = \{\sigma \in \Gamma_{E/\mathbb{Q}}  : c_{j_0^{\prime}, \sigma} = 1\}\) is a CM type of \(E\). 
  \end{proof}
 \end{proposition}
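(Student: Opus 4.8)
The plan is to follow the matrix bookkeeping that the hypothesis naturally suggests, tracking the support of $\chi$ under $g^{*}$ in the two character bases $\{[j,\sigma]\}$ and $\{[j\otimes\sigma]\}$ of $X^{*}(T_{E}^{\Sigma_F}) = X^{*}(T_{F\otimes_{\mathbb{Q}}E})$. First I would record the hypothesis explicitly: writing $\chi = \sum_{j,\sigma} a_{j,\sigma}[j,\sigma]$, the assumption is that $a_{j,\sigma}\in\{0,1\}$ and $a_{j,\sigma}=1$ precisely when $j=j_0$ and $\sigma$ lies in some CM type $\Phi\subset\Gamma_{E/\mathbb{Q}}$. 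Rewriting this in the basis $\{[j\otimes\sigma]\}$ — recalling $[j,\sigma]=[\sigma\circ j\otimes\sigma]$, so that the two bases differ by the row permutation $j\mapsto\sigma\circ j$ in row $\sigma$ — the support of $\chi$ consists of the $[j\otimes\sigma]$ with $j=\sigma\circ j_0$ and $\sigma\in\Phi$. Pictorially this is the ``column $j_0$'' in the first basis, spread out across the columns indexed by the embeddings $\sigma\circ j_0$ in the second.

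Next I would unwind the definition of $g^{*}$: in the $\{[j\otimes\sigma]\}$ basis it acts column by column, the $(j\otimes-)$ column being transformed by the right action $\sigma\mapsto\sigma j(g)$ of $j(g)\in S_{\Gamma_{E/\mathbb{Q}}}$. The key point is that $g\in\Aut_F(F\otimes_{\mathbb{Q}}E)$ forces each $j(g)$ to lie in the plectic subgroup $\Gamma_{E/\mathbb{Q}}\#\Gamma_{E/j(F)}\subset S_{\Gamma_{E/\mathbb{Q}}}$ (this is the functoriality encoded in Theorem 2.5 and Proposition 2.1). Such a plectic automorphism permutes the left cosets of $\Gamma_{E/j(F)}$, equivalently the fibres of $\Gamma_{E/\mathbb{Q}}\twoheadrightarrow\Hom_{\Qalg}(j(F),E)$, and by its very construction it carries the coset $\{\tau:\tau\circ j=j_0\}$ onto $\{\tau:\tau\circ j=j_0'\}$ where $j_0'=gj_0$. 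Consequently the right action $\sigma\mapsto(j(g)^{-1}(\sigma^{-1}))^{-1}$ appearing in the $j$-th component of $g^{*}$ sends $\{\sigma:\sigma\circ j_0=j\}$ to $\{\sigma:\sigma\circ j_0'=j\}$ for each $j$. Feeding in the support of $\chi$ computed above, $g^{*}\chi$ is supported on the $[j\otimes\sigma]$ with $\sigma\circ j_0'=j$; converting back to the first basis, this is exactly the ``column $j_0'$'', so $g^{*}\chi$ is pulled back from some subset $\Phi'\subset\Gamma_{E/\mathbb{Q}}$ via the $j_0'$-th projection.

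Finally I would check that $\Phi'$ is a CM type. Since $F$ is totally real, every embedding $\sigma\circ j_0':F\hookrightarrow E$ is fixed by complex conjugation, so both $\sigma$ and $c\sigma$ index entries in the same ``column $j_0'$''; and the right translation $\sigma\mapsto\sigma j(g)$ defining the relevant component of $g^{*}$ commutes with the left translation $\sigma\mapsto c\sigma$. Hence the permutation of indices implicit in $g^{*}$ respects the pairing $\{\sigma,c\sigma\}$, and since exactly one member of each such pair occurs in $\chi$ with coefficient $1$, the same holds for $g^{*}\chi$. Thus $\Phi'$ contains exactly one of $\sigma,c\sigma$ for every $\sigma$, i.e. it is a CM type, and in particular $g^{*}\chi\in X^{*}(\mathcal{S}^E_F)$.

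I expect the main obstacle to be entirely notational: keeping straight the three conventions simultaneously in play — the two character bases, and the ``left action $j(g)$ versus the associated right action $\sigma j(g)=(j(g)^{-1}(\sigma^{-1}))^{-1}$'' through which $g^{*}$ is defined — so that the plectic condition on $j(g)$ gets applied to the correct action and the column-support statement comes out as claimed. The genuine mathematical content (a plectic automorphism respects the fibering over $\Hom_{\Qalg}(j(F),E)$, and right actions commute with left translations) is short; the risk is index slips, so I would organise the write-up around the explicit matrix picture, exactly as a sanity check on every assertion about supports.
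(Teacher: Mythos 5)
Your proposal is correct and follows essentially the same route as the paper's proof: the same two character bases related by the row permutation $j\mapsto\sigma\circ j$, the same column-by-column action of $j(g)$, and the same use of the plectic condition to move the fibre $\{\tau:\tau\circ j=j_0\}$ onto $\{\tau:\tau\circ j=j_0'\}$. Your final step (that the right action of $j(g)$ commutes with left multiplication by $c$ because $c\in\Gamma_{E/j(F)}$ for $j(F)$ totally real) is exactly the content behind the paper's terser ``since $F$ is totally real, $\Phi'$ is a CM type,'' so you have in fact spelled that part out more fully than the original.
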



\section{The plectic universal Taniyama element}  

Let \(F \subset \overline{\mathbb{Q}}\) denote a totally real number field and \(E \subset \overline{\mathbb{Q}}\) a CM field which is Galois over \(\mathbb{Q}\) and large enough to contain \(F\). In this section we ape Langlands' construction of the universal Taniyama element to produce a map
\[f_F^E :  \Aut_F(F\otimes E^{ab} ) \rightarrow \mathcal{S}_F^E(\mathbb{A}_{E,f})/\mathcal{S}_F^E({E}) \]
which we call the \(F\)-plectic universal Taniyama element. 

Let \(W_{E/\mathbb{Q} , f}\) denote the finite-adelic version of the Weil group of the extension \(E / \mathbb{Q}\), which sits in a diagram
  \[ \begin{CD} 1 @>>> \mathbb{A}_{E,f}^{\times} / E^{\times} @ >>> W_{E/\mathbb{Q}, f} @>>> \Gamma_{E/\mathbb{Q}} @>>> 1 \\
  & & @Vart_EVV @V \varphi_f VV @V \text{ id } VV & \\
  1 @>>> \Gamma_E^{ab} @>>> \Gamma_{E^{ab} / \mathbb{Q}} @>>> \Gamma_{E/\mathbb{Q}}  @>>> 1 \end{CD} \]
in which the columns are surjective. The implicit action of \(\sigma \in \Gamma_{E/\mathbb{Q}}\) on \(\mathbb{A}_{E, f}^{\times} / E^{\times}\) is via \(e \mapsto e^{\sigma}\). We need to make some choices which will turn out to play no role:

\begin{itemize}
\item  Corresponding to the place \(v\) of \(\overline{\mathbb{Q}}\) coming from the embedding into \(\mathbb{C}\) (which we have fixed) there is a continuous map \(i_v : W_{\mathbb{R}} \rightarrow W_{\mathbb{Q}} \) (uniquely defined up to an inner automorphism of \(W_{\mathbb{Q}}\) by an element of \(\Ker( \varphi)\)) and a diagram
 \[ \begin{CD} W_{\mathbb{R}} @> \varphi_v >> \Gal(\mathbb{C}/\mathbb{R}) \\
 @V i_v VV @VVV \\
 W_{\mathbb{Q}} @> \varphi >>  \Gamma_{\mathbb{Q}} \end{CD} \]
 inducing the inclusion \(\mathbb{C}^{\times} \hookrightarrow C_E \simeq W_{E}^{ab} \) coming from the place \(v\). The map \(i_v\) gives a well defined lift of complex conjugation \(c \in \Gamma_{E^{ab} / \mathbb{Q}}\) to an element \(\tilde{c} \in  W_{E/\mathbb{Q}, f}\) of order 2 (which changes by conjugation by an element of \(\Ker(\varphi_f)\) if we change \(i_v\) - see \cite{Tate}, 1.6.1). We choose one such map \(i_v\) and let \(\tilde{c}\)  denote the associated complex conjugation. 
 \item Having fixed \(i_v\) we then pick elements \({w_{\sigma}} \in W_{E/\mathbb{Q}, f} \) such that  \(\sigma \mapsto w_{\sigma}\) is a section to the surjection \(W_{E/\mathbb{Q}, f} \twoheadrightarrow \Gamma_{E/\mathbb{Q}}\) and further we have the relation \(w_{c\sigma} = \tilde{c} w_{\sigma} \) for all \(\sigma \in \Gamma_{E/\mathbb{Q}}\) 
\end{itemize}

To construct the \(F\)-plectic universal Taniyama element we exploit the canonical isomorphism \(\Aut_F(F\otimes_{\mathbb{Q}} E^{ab}) \xrightarrow{\sim}  \Gamma_{E^{ab}/\mathbb{Q}} \# \Gamma_{E^{ab}/F}\). An embedding \(j : F \hookrightarrow E\) induces a natural map \( j : \Aut_F(F\otimes E^{ab}) \hookrightarrow \Aut_E(E \otimes E^{ab})\) which, in terms of plectic groups, is now precisely the composite map  \[ \Gamma_{E^{ab}/\mathbb{Q}} \# \Gamma_{E^{ab}/F} \xrightarrow{[{u}]} \Gamma_{E^{ab}/\mathbb{Q}} \# \Gamma_{E^{ab}/ j(F) } \subset \Gamma_{E^{ab}/\mathbb{Q}} \# \Gamma_{E^{ab}/ E }\] where here \(u \in \Gamma_{E^{ab}/\mathbb{Q}}\) is any element with \(u|_F = j\). We have a similar map on Weil groups and if we decompose \[{T}_{F \otimes E} (\mathbb{A}_{E,f})  / {T}_{F \otimes E} ({E})  \xrightarrow{\sim} \prod_{ j \otimes \sigma}  \mathbb{A}_{E,f}^{\times} / E^{\times} \] with \(j \in \Sigma_F\) and \(\sigma \in \Gamma_{E/\mathbb{Q}} \) we can define the \(F\)-plectic universal Taniyama element as follows:

\begin{definition} For \(\alpha \in \Gamma_{E^{ab}/ \mathbb{Q}} \# \Gamma_{E^{ab}/ F}\) we define
\(f_{F}^{E}(\alpha)  \in \mathcal{S}_F^E (\mathbb{A}_{E,f})  / \mathcal{S}_F^E({E}) \) 
to be the image of \[ h_F^E(\tilde{\alpha}) := (h_{\sigma^{-1}}(j(\tilde{\alpha})))_{j \otimes \sigma} := (w_{ j(\tilde{\alpha})\sigma^{-1}}^{-1} j(\tilde{\alpha})(w_{\sigma^{-1}}))_{j \otimes \sigma}    \in   {T}_{F\otimes E} (\mathbb{A}_{E,f})  / {T}_{F\otimes E} ({E})   \]
under \(T_{F \otimes E} \twoheadrightarrow \mathcal{S}_F^E\), where \(\tilde{\alpha}\) is any lift of \(\alpha\) to an element of \(W_{E/\mathbb{Q} , f} \# W_{E/F , f}\). For \(g  \in  \Aut_F(F\otimes E^{ab} )\) we set \[f_F^E(g) : = f_F^E(\alpha_g) \in \mathcal{S}_F^E(\mathbb{A}_{E,f})/\mathcal{S}_F^E({E})\] where \(\alpha_g \in \Gamma_{E^{ab}/\mathbb{Q}} \# \Gamma_{E^{ab}/F}\) is the image of \(g\) under the canonical isomorphism  \(\Aut_F(F\otimes_{\mathbb{Q}} E^{ab}) \xrightarrow{\sim}  \Gamma_{E^{ab}/\mathbb{Q}} \# \Gamma_{E^{ab}/F}\).
\end{definition}

Here \(W_{E/F, f}\) denotes the subgroup of \(W_{E/\mathbb{Q}, f}\) consisting of elements whose image in \(\Gamma_{E/\mathbb{Q}}\) lies in \(\Gamma_{E/F}\). The surjection \(\varphi_f :    W_{E/\mathbb{Q} , f} \twoheadrightarrow \Gamma_{E^{ab}/\mathbb{Q}}\) induces a surjection \(\varphi_f :  W_{E/\mathbb{Q} , f} \# W_{E/F , f} \twoheadrightarrow \Gamma_{E^{ab}/ \mathbb{Q}} \# \Gamma_{E^{ab}/ F}\). Each \(j(\tilde{\alpha}_g)\) is a element of \(W_{E/\mathbb{Q} , f} \# \mathbb{A}_{E,f}^{\times}/E^{\times}\) and so for any \(\sigma \in \Gamma_{E/\mathbb{Q}}\) the difference \(w_{ j(\tilde{\alpha}_g)\sigma^{-1}}^{-1} \cdot j(\tilde{\alpha}_g)(w_{\sigma^{-1}})\) lies in \(\mathbb{A}_{E,f}^{\times}/E^{\times}\). Then \(h_F^E( \tilde{\alpha}_g)\)
simply stores this collection of elements of  \(\mathbb{A}_{E,f}^{\times}/E^{\times}\) for each \(j \in \Sigma_F\) and \(\sigma \in \Gamma_{E/\mathbb{Q}} \) in an appropriate way.

\begin{remark} Let \(K\) be a CM field with maximal totally real subfield \(F\). Then if \(E\) contains \(K\) we have a representation 
 \[\rho_{\Phi, F} : T_{F\otimes_\mathbb{Q}E}  \rightarrow T_K\] from \S1 whose restriction to the diagonal is the classical reflex norm but which is general will not factor through the quotient \(T_{F\otimes_\mathbb{Q}E} \twoheadrightarrow \mathcal{S}_F^E\). This representation can be described on character groups by the map \[ X^{\ast}(T_K)   \ni [i] \mapsto  \sum_{j, \sigma } \Phi_{j}(\sigma^{-1} \circ i)[j, \sigma] = \sum_{j, \sigma } \Phi_{\sigma^{-1}\circ j}(\sigma^{-1} \circ i)[j \otimes \sigma]  \in X^{\ast}(T_{F\otimes_\mathbb{Q}E} ) \]
 for any \(i: K \hookrightarrow E\). 
The element 
 \[ \rho_{\Phi, F}(h_F^E(\tilde{\alpha}_g)) \in  {T}_{K} (\mathbb{A}_{E,f})  / {T}_{K} ({E}) = (K \otimes_{\mathbb{Q}} \mathbb{A}_{E,f}^{\times} )/ (K \otimes_{\mathbb{Q}} E)^{\times}  \xrightarrow{\sim} \prod_{i : K \hookrightarrow E} \mathbb{A}_{E,f}^{\times}/E^{\times}   \] 
might depend both on the choice of lift \(\tilde{\alpha}_g\) of \(\alpha_g\) and also on the choices we made in the construction, but can nevertheless be related to Nekovar's plectic-half transfer maps. More precisely, together Proposition 3.3 and Theorem 3.4 imply that for any \(i: K \hookrightarrow E\) the image of the \(i\) component of \(\rho_{\Phi, F}(h_F^E(\tilde{\alpha}_g)) \in  {T}_{K} (\mathbb{A}_{E,f})  / {T}_{K} ({E})\) under the Artin reciprocity map \(\mathbb{A}_{E,f}^{\times} / E^{\times} \twoheadrightarrow \Gamma_E^{ab}\) is always equal to the image of \(\widetilde{F_{\Phi}}(g) \in \Gamma_K^{ab}\) under the map \(\Gamma_K^{ab} \rightarrow \Gamma_E^{ab}\) induced by \(i\).
\end{remark}

Our next theorem justifies the choice of notation by showing that the \(F\)-plectic universal Taniyama element becomes well-defined independent of choices when we pass to the quotient \(T_{F \otimes E} \twoheadrightarrow \mathcal{S}_F^E\). The proof occupies most of the rest of this section, but is at heart a relatively straightforward generalisation of the calculations presented in \cite{Milne-Shih} for Langlands' Taniyama element.

\begin{theorem} For any \(g \in \Aut_F(F\otimes E^{ab} )\) the element \(f_F^E(g)\) is well-defined (i.e. independent of the choice of lift \(\tilde{\alpha}_g\)) and does not depend on the choice of \(i_v\) and of coset representatives \( w_{\sigma}\) for \(\sigma \in \Gamma_{E/\mathbb{Q}}\).
\begin{proof} Let us first fix a map \(i_v\) as above, and let \(\tilde{c} \in W_{E/\mathbb{Q}, f}  \) denote the associated lift of complex conjugation.  For \(\tilde{\alpha}  \in W_{E/\mathbb{Q} , f} \# W_{E/F , f} \) lifting \(\alpha \in \Gamma_{E^{ab}/\mathbb{Q}} \# \Gamma_{E^{ab}/ F}\) let's write  \[ h_F^E(\tilde{\alpha}) := (h_{\sigma^{-1}}(j(\tilde{\alpha})))_{j \otimes \sigma} := (w_{ j(\tilde{\alpha})\sigma^{-1}}^{-1} \cdot j(\tilde{\alpha})(w_{\sigma^{-1}}))_{j \otimes \sigma}    \in   {T}_{F\otimes E} (\mathbb{A}_{E,f})  / {T}_{F\otimes E} ({E})   \]
and \(f_F^E(\tilde{\alpha})\) for its image under \(T_{F \otimes E} \twoheadrightarrow \mathcal{S}_F^E\). We need to show that \(f_F^E(\tilde{\alpha})\) only depends on \(\alpha\) and not on \(\tilde{\alpha}\) or the choices of \(i_v\) and \( w_{\sigma}\) for \(\sigma \in \Gamma_{E/\mathbb{Q}}\). First we can show:

\begin{proposition}  For \(\tilde{\alpha}  \in W_{E/\mathbb{Q} , f} \# W_{E/F , f} \) the element  \[f_{F}^{E} (\tilde{\alpha}) \in  \mathcal{S}_F^E (\mathbb{A}_{E,f})  / \mathcal{S}_F^E(E)\]
does not depend of the choice of coset representatives \(w_{\sigma}\) satisfying such that \(w_{c\sigma} = \tilde{c} w_{\sigma} \) for all \(\sigma \in \Gamma_{E/\mathbb{Q}}\). 

\begin{proof} Since \(\mathcal{S}_F^E\) is split over \(E\) it is enough to show that for all \(\chi  \in X^{\ast}(\mathcal{S}_F^E) = \Hom({\mathcal{S}_F^E}_{/E}, {\mathbb{G}_m}_{/E})\) the element
\( \chi ( f_{F}^E (\tilde{\alpha})) \in  \mathbb{A}_{E,f}^{\times}/E^{\times}\)
does not depend on the choice of \(w = (w_{\sigma})_{\sigma \in \Gamma_{E/\mathbb{Q}}}\). Let \(\chi\) be a character of \( \mathcal{S}_F^E \) and write 
 \[\chi = \sum_{j, \sigma} a_{j,\sigma} [j, \sigma] = \sum_{j, \sigma} b_{j \otimes \sigma}[ j \otimes \sigma ] \] where \(b_{\sigma \circ j \otimes \sigma} = a_{j, \sigma}\), i.e. \(b_{j \otimes \sigma} = a_{\sigma^{-1} \circ j, \sigma}\). From \S4, we have an algebraic action of \(W_{E/\mathbb{Q} , f} \# W_{E/F , f} \) on \(\mathcal{S}_F^E\) via the quotient \(\Gamma_{E/\mathbb{Q}}\# \Gamma_{E/F} \), and let \(\chi^{\prime} \) denote the character \( ({\tilde{\alpha}^{-1}})^{\ast} \chi\) of \(\mathcal{S}_F^E\). If we write 
 \[ \chi^{\prime} = \sum_{j, \sigma} c_{j,\sigma} [j, \sigma]  = \sum_{j, \sigma} d_{j \otimes \sigma}[ j \otimes \sigma ] \]
then by the definition of the algebraic action we have  \(d_{j \otimes (j(\tilde{\alpha}) \sigma^{-1})^{-1}}  = b_{j \otimes \sigma}\), i.e. \(d_{j \otimes \sigma} = b_{j \otimes (j(\tilde{\alpha})^{-1}\sigma^{-1})^{-1}}\).
  If \( w = (w_{\sigma})\) is replaced by \(w^{\prime} = (w_{\sigma}^{\prime})\) with \(w_{\sigma}^{\prime} = w_{\sigma}e_{\sigma} \) and \(e_{\sigma} \in \mathbb{A}_{E,f}^{\times} /E^{\times} \) then we have 
\[ h_{\sigma}^{\prime}(j(\tilde{\alpha})) = (w^{\prime}_{j(\tilde{\alpha}) \sigma})^{-1} j(\tilde{\alpha})(w^{\prime}_{\sigma}) =  e_{j(\tilde{\alpha})\sigma}^{-1} e_{\sigma} h_{\sigma}(j(\tilde{\alpha})) \]
and we note here that the conditions on \(w_{\sigma}\) and \(w_{\sigma}^{\prime}\) (with respect to \(\tilde{c} \)) mean that \(e_{\sigma}\) only depends on \(\sigma|_{E^{+}}\), where \(E^+\) denotes the maximal totally real subfield of \(E\). 
Thus \( \chi(f_{F}^E (\tilde{\alpha})) \in \mathbb{A}_{E,f}^{\times} /E^{\times}\) changes by 
\[ \prod_{j, \sigma} \frac{ h_{\sigma^{-1}}^{\prime}(j(\tilde{\alpha}))^{b_{j \otimes \sigma}}}{{{h_{\sigma^{-1}}(j(\tilde{\alpha}))}^{b_{j \otimes \sigma}}}} =  \prod_{j, \sigma} \frac{ h_{\sigma}^{\prime}(j(\tilde{\alpha}))^{b_{j \otimes \sigma^{-1}}}}{{{h_{\sigma}(j(\tilde{\alpha}))}^{b_{j \otimes \sigma^{-1}}}}} = \prod_{j, \sigma} \frac{ e_{\sigma}^{b_{j \otimes \sigma^{-1}}} }{  e_{j(\tilde{\alpha})\sigma}^{b_{j \otimes \sigma^{-1}}}} =  \prod_{j, \sigma} \frac{ e_{\sigma}^{b_{j \otimes \sigma^{-1}}} }{  e_{\sigma}^{b^{\prime}_{j \otimes \sigma^{-1}}}} \in \mathbb{A}_{E,f}^{\times} /E^{\times}\]
where \(b^{\prime}_{j \otimes \sigma^{-1}} = b_{j \otimes (j(\tilde{\alpha})^{-1} \sigma )^{-1}} = d_{j \otimes \sigma^{-1}} \). Hence this difference is 
\( \prod_{\sigma} \frac{ e_{\sigma}^{b_{\sigma^{-1}}}}{e_{\sigma}^{d_{\sigma^{-1}}}} \)
where for each \(\sigma \in \Gamma_{E/\mathbb{Q}}\) we have \(b_{\sigma} = \sum_{j} b_{j \otimes \sigma} = \sum_j a_{j, \sigma} \) and \(d_{\sigma} = \sum_{j} d_{j \otimes \sigma} = \sum_j c_{j, \sigma} \).

We want to show that this product is trivial for all \(\chi\). In order to prove the claim we may assume \(\chi\) is the pull back of a character of \(\mathcal{S}^{E}\) attached to a CM type \(\Phi \subset \Gamma_{E/\mathbb{Q}} \) via the projection \(\mathcal{S}_F^E = (\mathcal{S}^E)^{\Sigma_F} \twoheadrightarrow T_E\) coming from an embedding \(j_0 : F \hookrightarrow E\). In other words, we have all \(a_{j, \sigma} \in \{0,1\}\) and \(a_{j, \sigma}\) is always zero if \(j \neq j_0\) and we have \(a_{j_0, \sigma} + a_{j_0, c\sigma} = 1\) for all \(\sigma \in \Gamma_{E/\mathbb{Q}}\). Thanks to Proposition 4.1, we know that in this case if \(j_0^{\prime} : F \hookrightarrow E\) is such that \(\tilde{\alpha} j_0^{\prime} = j_0\) then  we have \(c_{j, \sigma} = 0\) for all \(j \neq j_0^{\prime}\) and \(c_{j_0^{\prime}, \sigma} + c_{j_0^{\prime}, c\sigma} = 1\) for all \(\sigma \in  \Gamma_{E/\mathbb{Q}}\).  Thus if we let \(\Phi\) and \(\Phi^{\prime}\) denote the CM types attached to \(\chi\) and \(\chi^{\prime} = (\tilde{\alpha}^{-1})^{\ast} \chi \) we have \[b_{\sigma} = \sum_j a_{j, \sigma} = \Phi(\sigma)  \in \{0,1\}\] and similarly 
\(d_{\sigma} = \Phi^{\prime}(\sigma)\). We deduce that \( \chi ( f_{F}^E (\tilde{\alpha}))\) changes by
\[ \prod_{\sigma} \frac{ e_{\sigma}^{ \Phi(\sigma^{-1})  }}{e_{\sigma}^{\Phi^{\prime}(\sigma^{-1})}} \in \mathbb{A}_{E,f}^{\times} /E^{\times}\]
and this is just 1 since both \(\Phi\) and \(\Phi^{\prime}\) are CM types of \(E\) and \(e_{\sigma}\) only depends on \(\sigma|_{E^{+}}\). 

 \end{proof}
\end{proposition} 

From Proposition 5.2, we can deduce:

\begin{proposition} For any \(\tilde{\alpha} \in W_{E/\mathbb{Q} , f} \# W_{E/F , f}\) the element 
\(f_{F}^{E}(\tilde{\alpha}) \) lies in \(  \left[ \mathcal{S}_F^E(\mathbb{A}_{E,f} ) / \mathcal{S}_F^E(E) \right] ^{\Gal(E/\mathbb{Q})} \subset \mathcal{S}_F^E(\mathbb{A}_{E,f} ) / \mathcal{S}_F^E(E)\)

\begin{proof} Here we are consider the arithmetic action of \(\Gal(E/\mathbb{Q})\) coming from the action on \(\mathbb{A}_{E,f}\) points. Let \(\tau\) be in  \(\Gal(E/\mathbb{Q})\). If we write \({T}_E^{\Sigma_F} (\mathbb{A}_{E,f})  / {T}^{\Sigma_F}_E ({E})  \xrightarrow{\sim} \prod_{j, \sigma}  \mathbb{A}_{E,f}^{\times} / E^{\times}  \) then the arithmetic action of \(\tau\) on the left hand side translates into the action \((e_{j, \sigma})_{j, \sigma} \mapsto (e^{\tau}_{j, \tau^{-1}\sigma})_{j, \sigma}\) on the right hand side.  By definition \(f_{F}^E(\tilde{\alpha} ) \) is equal to the image of \((h_{\sigma^{-1}}(j(\tilde{\alpha} )))_{j \otimes \sigma} =  (h_{\sigma^{-1}}( \sigma \circ j(\tilde{\alpha} )))_{j, \sigma}\) under \(T_E^{\Sigma_F} \twoheadrightarrow \mathcal{S}_F^E\) and hence \(\tau^{-1}(f_{F}^E(\tilde{\alpha} ))\) is the image of \[ (h^{\tau^{-1}}_{\sigma^{-1}\tau^{-1}}(\tau \sigma \circ j(\tilde{\alpha} )))_{j, \sigma}  = (\tilde{\tau}^{-1} \cdot w^{-1}_{ \tau \sigma \circ j(\tilde{\alpha} )( \sigma^{-1} \tau^{-1})}  \cdot \tau \sigma \circ j(\tilde{\alpha} )  (w_{\sigma^{-1} \tau^{-1}}) \cdot \tilde{\tau} )_{j, \sigma}\]
where \(\tilde{\tau}\) is any lift of \(\tau\) to \(W_{E/ \mathbb{Q}, f}\). Using the facts that
\begin{itemize}
\item \(\tau \sigma \circ j(\tilde{\alpha} )(\sigma^{-1} \tau^{-1}) =  j(\tilde{\alpha} )(1) \cdot \sigma^{-1} \tau^{-1}\) (where \(1\) denotes the identity in \(\Gamma_{E/\mathbb{Q}}\)); and 
\item \(\tau \sigma \circ j(\tilde{\alpha} ) = [\tilde{\tau}](\sigma \circ j(\tilde{\alpha})) \) 
\end{itemize}
we see this can be expressed as 
\( (\tilde{\tau}^{-1} \cdot w^{-1}_{ j(\tilde{\alpha} )(1) \cdot \sigma^{-1} \tau^{-1}}  \cdot \sigma \circ j(\tilde{\alpha} ) (w_{\sigma^{-1} \tau^{-1}} \tilde{\tau}) )_{j, \sigma}\). We define a new set of coset representatives \(w^{\prime}_{\sigma}\) by \(w^{\prime}_{\sigma} = w_{\sigma \tau^{-1}} \tilde{\tau}\) and by Proposition 5.2 we know \(f_{F}^E(\tilde{\alpha} )\) can be computed as the image of 
\[ (h^{\prime}_{\sigma^{-1}} (\sigma \circ j(\tilde{\alpha})))_{j, \sigma} = (w^{\prime \; -1}_{\sigma \circ j(\tilde{\alpha} )(\sigma^{-1})} \cdot \sigma \circ j(\tilde{\alpha} )(w^{\prime}_{\sigma^{-1}}))_{j, \sigma}\] which can be expressed as  \( (\tilde{\tau}^{-1} \cdot w^{-1}_{j(\tilde{\alpha} )(1) \cdot\sigma^{-1}\tau^{-1}} \cdot \sigma \circ j(\tilde{\alpha} )({w}_{\sigma^{-1} \tau^{-1}} \tilde{\tau}))_{j, \sigma} \).  Hence \( \tau^{-1}(f_{F}^E(\tilde{\alpha} )) = f_{F}^E(\tilde{\alpha} )\) as required. 
\end{proof}
\end{proposition} 

We use a topological argument to conclude that the map \( f_{F}^{E} \) factors through \(\Gamma_{E^{ab}/\mathbb{Q} } \# \Gamma_{E^{ab}/F } \). The group \( W_{E/\mathbb{Q} , f} \# W_{E/F , f}\) inherits a naturally topology coming from the wreath product structure for which the map \(f_{F}^E\) is continuous (this follows from our remarks right at the end of \S2). It follows from Chevalley's theorem on units
 that the induced topology on \(S^E(\mathbb{Q}) \subset  S^E(\mathbb{A}_{\mathbb{Q}, f})\) is the discrete topology (see \cite{Milne-Shih}, \S1.2). As a consequence, \(S^E(\mathbb{Q})\) is closed in \(S^E(\mathbb{A}_{\mathbb{Q}, f})\) and the quotient \(S^E(\mathbb{A}_{\mathbb{Q}, f}) / S^E(\mathbb{Q})\) is totally disconnected. Similarly, if \(E\) is Galois over \(\mathbb{Q}\), then the space \( \left[ S^E(\mathbb{A}_{E,f}) / S^E(E) \right]^{\Gal(E/\mathbb{Q})}  \) is totally disconnected, and this plays an important role in showing that Langlands' Taniyama element is well-defined (see \cite{Fargues}, \S9.9). 
 
It immediately follows that the product group \( \left[ S_F^E(\mathbb{A}_{E,f}) / S_F^E(E) \right]^{\Gal(E/\mathbb{Q})}\) is also totally disconnected, and so it suffices to establish that the kernel of \(W_{E/\mathbb{Q} , f} \# W_{E/F , f} \twoheadrightarrow  \Gamma_{E^{ab}/\mathbb{Q} } \# \Gamma_{E^{ab}/F } \) is connected. If we fix a collection \(\tilde{s} = (\tilde{s}_j)_{j \in \Sigma_F}\) of coset representatives for \(W_{E/F , f}\) in  \(  W_{E/\mathbb{Q} , f}\)  and let \(s = (s_j)_{j\in \Sigma_F} = (\varphi_f(\tilde{s}_j))_{j\in \Sigma_F}\) be the induced collection of coset representatives for \(\Gamma_{E^{ab}/F }\) in  \(  \Gamma_{E^{ab}/\mathbb{Q}}\) then the maps \(\rho_s\) and \(\rho_{\tilde{s}}\) fit into a commutative diagram 
\[ \begin{CD} W_{E/\mathbb{Q} , f} \# W_{E/F , f} @>>>  \Gamma_{E^{ab}/\mathbb{Q} } \# \Gamma_{E^{ab}/F }\\
 @V \rho_{\tilde{s}}V \sim V  @V\rho_sV \sim V \\
  S_{\Sigma_F} \ltimes W_{E/F , f}^{\Sigma_F} @>>>  S_{\Sigma_F} \ltimes \Gamma_{E^{ab}/F } ^{\Sigma_F} \end{CD} \]
 where the bottom line is induced by \(\varphi_f\) on each component. It is clear that the kernel of the bottom line is connected with respect to the natural topology on \(S_{\Sigma_F} \ltimes W_{E/F , f}^{\Sigma_F}\) and so by definition so is the kernel of the top line. We conclude that the map \( f_{F}^{E} \) does indeed factor through the surjection \(W_{E/\mathbb{Q} , f} \# W_{E/F , f} \twoheadrightarrow \Gamma_{E^{ab}/\mathbb{Q} } \# \Gamma_{E^{ab}/F } \). Finally we can show: 

\begin{proposition} For \(\alpha \in  \Gamma_{E^{ab}/\mathbb{Q} } \# \Gamma_{E^{ab}/F }\) the  element \(f_{F}^E(\alpha) \in  \mathcal{S}_F^E(\mathbb{A}_{E,f} ) / \mathcal{S}_F^E(E) \) does not depend on the choice of \(i_v\). 
\begin{proof} Let \(i_v\), \(i_v^{\prime}\) be two continuous maps \(W_{\mathbb{R}} \rightarrow W_{\mathbb{Q}} \) which corresponding to the place \(v\) of \(\overline{\mathbb{Q}}\) coming from the fixed embedding into \(\mathbb{C}\). Then \(i_v\) and \(i_v^{\prime}\) differ by an inner automorphism of \(W_{\mathbb{Q}}\) by an element of \(\Ker(\varphi)\) , and so if \(\tilde{c}\) and \(\tilde{c}^{\prime}\) the associated lifts of complex conjugation to \(W_{E/\mathbb{Q},f }\) we know that we have \(\tilde{c}^{\prime} = u \tilde{c} u^{-1} \) for some \(u \in \Ker(W_{E/\mathbb{Q},f } \xrightarrow{\varphi_f} \Gamma_{E^{ab}/ \mathbb{Q}} )\).
Let  \(f_{F, \tilde{c}}^E\) and \(f_{F, \tilde{c}^{\prime}}^E\) the maps constructed using \(\tilde{c}\) and \(\tilde{c}^{\prime}\) respectively. By definition \(f_{F, \tilde{c}}^E(\alpha)\) is equal to the image of 
\( (w_{j(\tilde{\alpha})\sigma}^{-1} \cdot j(\tilde{\alpha})(w_{\sigma}))_{j \otimes \sigma^{-1}} \)
where \(\tilde{\alpha} \in W_{E/\mathbb{Q}, f } \# W_{E/F,f }\) is any lift of \(\alpha\)  and the \(w_{\sigma}\) are any coset representatives such that \(w_{c\sigma} = \tilde{c} w_{\sigma}\). 

If we set \(w_{\sigma}^{\prime} = uw_{\sigma} \) then \( w^{\prime}_{c\sigma} = \tilde{c}^{\prime} w^{\prime}_{\sigma}\) and \(\tilde{\alpha}^{\prime} \in W_{E/\mathbb{Q}, f } \# W_{E/F,f }\) to be the map defined by \(\tilde{\alpha}^{\prime}(x) = u \tilde{\alpha}( u ^{-1}x) \) then \(\tilde{\alpha}^{\prime}\) is a lift of \(\alpha\) such that 
\[ (w^{\prime \; -1}_{j(\tilde{\alpha}^{\prime})\sigma} \cdot j(\tilde{\alpha}^{\prime})(w^{\prime}_{\sigma}))_{j \otimes \sigma^{-1}} =  (w_{j(\tilde{\alpha})\sigma}^{-1}\cdot u^{-1} j(\tilde{\alpha}^{\prime})(u w_{\sigma}))_{j \otimes \sigma^{-1}} =  (w_{j(\tilde{\alpha})\sigma}^{-1} \cdot  j(\tilde{\alpha})(w_{\sigma}))_{j \otimes \sigma^{-1}}\]
as one can check from the definitions that \(j(\tilde{\alpha}^{\prime}) = j(\tilde{\alpha})^{\prime} \). 
Hence \(f_{F, \tilde{c}}^E(\alpha) =  f_{F, \tilde{c}^{\prime}}^E(\alpha)\) as required. 
\end{proof}
\end{proposition}
This competes the proof of Theorem 5.1.
\end{proof} 
\end{theorem}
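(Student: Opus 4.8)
The plan is to prove the statement in four moves, working first at the level of the Weil-group plectic group $W_{E/\mathbb{Q},f}\#W_{E/F,f}$ and descending to $\Gamma_{E^{ab}/\mathbb{Q}}\#\Gamma_{E^{ab}/F}$ only at the end, by a topological argument. Throughout I would keep the auxiliary map $i_v$ (hence the lift $\tilde c$ of complex conjugation) fixed until the last step, and for a lift $\tilde\alpha\in W_{E/\mathbb{Q},f}\#W_{E/F,f}$ of $\alpha\in\Gamma_{E^{ab}/\mathbb{Q}}\#\Gamma_{E^{ab}/F}$ I write $f_F^E(\tilde\alpha)$ for the image in $\mathcal{S}_F^E(\mathbb{A}_{E,f})/\mathcal{S}_F^E(E)$ of the tuple $(w_{j(\tilde\alpha)\sigma^{-1}}^{-1}\,j(\tilde\alpha)(w_{\sigma^{-1}}))_{j\otimes\sigma}$.

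Step one is independence of the coset representatives $w_\sigma$. I would replace $w_\sigma$ by $w_\sigma e_\sigma$ with $e_\sigma\in\mathbb{A}_{E,f}^\times/E^\times$ and compute that the entry $h_\sigma(j(\tilde\alpha))$ picks up the factor $e_{j(\tilde\alpha)\sigma}^{-1}e_\sigma$; the compatibility $w_{c\sigma}=\tilde c w_\sigma$, imposed on both choices, forces $e_\sigma$ to depend only on $\sigma|_{E^+}$. Since $\mathcal{S}_F^E$ is split over $E$ it suffices to show $\chi(f_F^E(\tilde\alpha))$ is unchanged for every character $\chi$ of $\mathcal{S}_F^E$, and by $\mathbb{Z}$-linearity one reduces to the case where $\chi$ is pulled back from the character of $T_E$ attached to a CM type $\Phi$ of $E$ via a single projection $\mathcal{S}_F^E\twoheadrightarrow T_E$ indexed by an embedding $j_0:F\hookrightarrow E$. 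This is exactly where Proposition 4.1 enters: it guarantees that $(\tilde\alpha^{-1})^{\ast}\chi$ is again supported on a single projection, indexed by the $j_0'$ with $\tilde\alpha j_0'=j_0$, and corresponds to another CM type $\Phi'$ of $E$. The change in $\chi(f_F^E(\tilde\alpha))$ then collapses to $\prod_\sigma e_\sigma^{\Phi(\sigma^{-1})-\Phi'(\sigma^{-1})}$, which is trivial because $e_\sigma$ depends only on $\sigma|_{E^+}$ while $\Phi$ and $\Phi'$, being CM types of $E$, each select exactly one element of every pair $\{\sigma,c\sigma\}$, so the two products coincide. I expect this to be the main obstacle, essentially for bookkeeping reasons: one must keep the two character bases $[j,\sigma]$ and $[j\otimes\sigma]$ and the plectic right action $\sigma j(\tilde\alpha):=(j(\tilde\alpha)^{-1}(\sigma^{-1}))^{-1}$ scrupulously straight throughout.

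Step two deduces that $f_F^E(\tilde\alpha)$ is fixed by the arithmetic action of $\Gal(E/\mathbb{Q})$ on $\mathcal{S}_F^E(\mathbb{A}_{E,f})/\mathcal{S}_F^E(E)$: writing the action of $\tau$ in the decomposition $\prod_{j,\sigma}\mathbb{A}_{E,f}^\times/E^\times$ and reorganising it as a change of section $w_\sigma'=w_{\sigma\tau^{-1}}\tilde\tau$ for any lift $\tilde\tau$ of $\tau$, step one yields $\tau^{-1}(f_F^E(\tilde\alpha))=f_F^E(\tilde\alpha)$. Step three is the descent. The map $f_F^E$ is continuous on $W_{E/\mathbb{Q},f}\#W_{E/F,f}$ for the wreath-product topology (as at the end of \S2), its values lie in the totally disconnected space $[\mathcal{S}_F^E(\mathbb{A}_{E,f})/\mathcal{S}_F^E(E)]^{\Gal(E/\mathbb{Q})}$ — total disconnectedness following from Chevalley's theorem on units, which makes $\mathcal{S}^E(E)$ discrete in $\mathcal{S}^E(\mathbb{A}_{E,f})$, hence the quotient and its Galois-fixed part and then the product over $\Sigma_F$ totally disconnected — and the kernel $N$ of $W_{E/\mathbb{Q},f}\#W_{E/F,f}\twoheadrightarrow\Gamma_{E^{ab}/\mathbb{Q}}\#\Gamma_{E^{ab}/F}$ is connected: choosing compatible coset representatives identifies this surjection, via $\rho_{\tilde s}$ and $\rho_s$, with the map $S_{\Sigma_F}\ltimes W_{E/F,f}^{\Sigma_F}\to S_{\Sigma_F}\ltimes\Gamma_{E^{ab}/F}^{\Sigma_F}$ induced componentwise by $\varphi_f$, whose kernel is a product of copies of the connected group $\ker(W_{E/F,f}\twoheadrightarrow\Gamma_{E^{ab}/F})$. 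A continuous map to a totally disconnected space is constant on each connected set, in particular on each coset of $N$, so $f_F^E$ descends to $\Gamma_{E^{ab}/\mathbb{Q}}\#\Gamma_{E^{ab}/F}$; since any two lifts of $\alpha_g$ differ by an element of $N$, this gives independence of the choice of lift $\tilde\alpha_g$.

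Finally, step four handles the remaining dependence on $i_v$. Two choices differ by an inner automorphism of $W_{\mathbb{Q}}$ by an element of $\ker\varphi$, so the associated lifts satisfy $\tilde c'=u\tilde c u^{-1}$ for some $u\in\ker(W_{E/\mathbb{Q},f}\xrightarrow{\varphi_f}\Gamma_{E^{ab}/\mathbb{Q}})$; then $w_\sigma':=uw_\sigma$ is compatible with $\tilde c'$ and $\tilde\alpha'(x):=u\tilde\alpha(u^{-1}x)$ is a lift of the same $\alpha$. A direct check that $j(\tilde\alpha')=j(\tilde\alpha)'$ in the notation of Proposition 2.4 shows the defining tuples computed with the two sets of data coincide, so $f_F^E(\alpha)$ is unchanged. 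Combining the four steps proves the theorem.
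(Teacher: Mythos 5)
Your proposal is correct and follows essentially the same route as the paper: independence of the $w_\sigma$ via reduction to CM-type characters and Proposition 4.1, Galois invariance by reinterpreting the $\tau$-action as a change of section, descent through the connected kernel onto the totally disconnected Galois-fixed quotient, and finally the conjugation argument for $i_v$. No gaps.
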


We end this section by studying the properties of the \(F\)-plectic universal Taniyama element \(f_F^E\) as we vary the fields \(F\) and \(E\). 

\begin{proposition} Let \(F \subset \overline{\mathbb{Q}}\) be a totally real number field and \(E \subset \overline{\mathbb{Q}}\) be CM, Galois over \(\mathbb{Q}\) and containing \(F\). Then:

(i)  If \(F \subset F^{\prime} \subset E\) is totally real then the maps \(f_{F}^E \) and \(f_{F^{\prime}}^E\) fit into a commutative diagram
\[ \begin{CD} \Aut_F(F \otimes_{\mathbb{Q}} E^{ab})  @> f_F^E >> \mathcal{S}_F^{E}(\mathbb{A}_{{E},f}) /  \mathcal{S}_F^{E}(\mathbb{A}_{{E},f}) \\
@VVV @V \text{ diag }VV \\
\Aut_{F^{\prime}}(F^{\prime} \otimes_{\mathbb{Q}} E^{ab})   @> f_{F^{\prime}}^E >>   \mathcal{S}_{F^{\prime}}^E(\mathbb{A}_{{E},f}) /  \mathcal{S}_{F^{\prime}}^E(\mathbb{A}_{E,f}) \end{CD} \]
where \(\text{diag}\) denotes the diagonal morphism \(\mathcal{S}^E_{F} \rightarrow \mathcal{S}_{F^{\prime}}^E\) induced by the natural surjection \(\Sigma_{F^{\prime}} \twoheadrightarrow \Sigma_{F}\).  

(ii)  Let \(u \in \Gamma_{\mathbb{Q}} \) and \(F^{\prime} = u(F) \subset \overline{\mathbb{Q}}\). The maps \(f_F^E\) and \(f_{F^{\prime}}^E\) fit into a commutative diagram\footnote{Throughout this paper we have taken our totally real field \(F\) to be an embedded subfield of \(\overline{\mathbb{Q}}\). Although this simplifies the exposition, it is not necessary. Indeed, by Proposition 5.5(ii), if \(F\) is any abstract totally real field and \(E \subset \overline{\mathbb{Q}}\) is CM, Galois over \(\mathbb{Q}\) and splits \(F\) then the composite 
\[f_F^E : \Aut_F(F\otimes E^{ab} ) \xrightarrow{j}  \Aut_{j(F)}(j(F)\otimes E^{ab} ) \xrightarrow{f_{j(F)}^E} \mathcal{S}_{j(F)}^{E}(\mathbb{A}_{{E},f}) /  \mathcal{S}_{j(F)}^{E}(\mathbb{A}_{{E},f}) \xrightarrow{j^{-1}} \mathcal{S}_F^E(\mathbb{A}_{E,f})/\mathcal{S}_F^E({E})\] is independent of the choice of embedding \(j : F \hookrightarrow E\) and so defines a canonical \(F\)-plectic universal Taniyama element. }:
\[ \begin{CD}\Aut_F(F \otimes_{\mathbb{Q}} E^{ab}) @> f_F^E >>  \mathcal{S}_F^{E}(\mathbb{A}_{{E},f}) /  \mathcal{S}_F^{E}(\mathbb{A}_{{E},f})  \\
@V [u] VV @VV u_{\ast} V \\
\Aut_{F^{\prime}}(F^{\prime} \otimes_{\mathbb{Q}} E^{ab})  @> f_{F^{\prime}}^E>> \mathcal{S}_{F^{\prime}}^{E}(\mathbb{A}_{{E},f}) /  \mathcal{S}_{F^{\prime}}^{E}(\mathbb{A}_{{E},f})   \end{CD} \] 
where the map \(u_{\ast}\) on the right hand side is induced by the bijection \(\Sigma_{F^{\prime}} \rightarrow \Sigma_{F}\) given by \( j^{\prime} \mapsto j = j^{\prime} \circ u|_F\).

\begin{proof} The inclusion \( \Aut_F(F \otimes_{\mathbb{Q}} E^{ab}) \hookrightarrow \Aut_{F^{\prime}}(F^{\prime} \otimes_{\mathbb{Q}} E^{ab}) \) corresponds to the natural inclusion of plectic groups. Let  \(\alpha \) be an element of \( \Gamma_{E^{ab}/\mathbb{Q}} \# \Gamma_{E^{ab}/F}  \subset \Gamma_{E^{ab}/\mathbb{Q}} \# \Gamma_{E^{ab}/F^{\prime}}  \) and \(\tilde{\alpha}\) be a lift of \(\alpha\) to an element of \(W_{E /\mathbb{Q}, f}\# W_{E / F, f}\), which can view as a subgroup of \(W_{E /\mathbb{Q}, f}\# W_{E / F^{\prime}, f} \). By definition we have \(f^E_{F}(\alpha)\) equal to the image of 
\((h_{\sigma^{-1}}(  j(\tilde{\alpha})))_{j \otimes \sigma} \)
under \(T_{F \otimes E} \twoheadrightarrow \mathcal{S}_F^E \). Similarly \(f^E_{F^{\prime}}(\alpha)\) is the image of 
\((h_{\sigma^{-1}}(  j^{\prime}(\tilde{\alpha})))_{j^{\prime} \otimes \sigma} \)
under \(T_{F^{\prime} \otimes E} \twoheadrightarrow \mathcal{S}_{F^{\prime}}^E \) and, as \(\tilde{\alpha}\) lies in the subgroup \(W_{E /\mathbb{Q}, f}\# W_{E / F^{\prime}, f}\), the image of \(\tilde{\alpha}\) under \(j^{\prime}\) only depends on \(j^{\prime}|_F\). Hence \(f^E_{F^{\prime}}(\alpha)\) is equal to the image of 
\( (h_{\sigma^{-1}}( j^{\prime}|_F(\tilde{\alpha})))_{j^{\prime} \otimes \sigma} \)
under \(T_{F \otimes E} \twoheadrightarrow \mathcal{S}_F^E \) which is precisely \(\text{ diag }(f_{F}^E(\alpha))\). 
 
 To prove (ii) recall that for \(\alpha \in \Gamma_{E^{ab}/\mathbb{Q}} \# \Gamma_{E^{ab}/F} \) by definition \(f^E_F(\alpha)\) is the image of 
\( (h_{\sigma^{-1}}(j(\tilde{\alpha})))_{j \otimes \sigma} \)
where \(j(\tilde{\alpha})\) means \( [\tilde{v}] \tilde{\alpha} \) and \(\tilde{v} \in W_{E /\mathbb{Q}, f}\) is such that \( v |_{F} = j \).  Similarly for \(\alpha^{\prime} \in \Gamma_{E^{ab}/\mathbb{Q}} \# \Gamma_{E^{ab}/F^{\prime}} \) then  \(f^E_{F^{\prime}}( \alpha^{\prime})\) is the image of 
\( (h_{\sigma^{-1}}( j^{\prime}(\tilde{\alpha}^{\prime})))_{j^{\prime} \otimes \sigma} \) where
\(j^{\prime}(\tilde{\alpha}^{\prime})\) means \( [\tilde{w}] \tilde{\alpha}^{\prime} \) and \(\tilde{w} \in W_{E /\mathbb{Q}, f}\) is such that \(w |_{F^{\prime}} = j^{\prime} \). The map \(u_{\ast}\) on the right hand side of the diagram sends \([j^{\prime} \otimes \sigma] \mapsto [j \otimes \sigma]\) where \(j = j^{\prime} \circ u|_{F}\).  The map \([u] : \Aut_F(F \otimes_{\mathbb{Q}} E^{ab}) \rightarrow \Aut_{F^{\prime}}(F^{\prime} \otimes_{\mathbb{Q}} E^{ab}) \) on the left hand side is induced by \([\tilde{u}] \) on plectic groups, and the \( j^{\prime} \otimes \sigma \) component of \(f^E_{F^{\prime}}([\tilde{u}] \tilde{\alpha})\) can be computed as 
\[ h_{\sigma^{-1}}( j^{\prime}([\tilde{u}]\tilde{\alpha})) = h_{\sigma^{-1}}([ \tilde{w}][\tilde{u}] \tilde{\alpha}) =  h_{\sigma^{-1}}([ \tilde{w}\tilde{u}] \tilde{\alpha}) = h_{\sigma^{-1}}(j(\tilde{\alpha}))\]
as \({w}{u} |_F = j\) if \(w|_{F^{\prime}} = j^{\prime} = j \circ u|_{F}^{-1}\). 
\end{proof}
\end{proposition}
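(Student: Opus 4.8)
\emph{Proof proposal.} The plan is to prove both parts by unwinding the definition of the \(F\)-plectic universal Taniyama element and comparing components, the only real inputs being the explicit descriptions of the functoriality maps on plectic and Weil groups recorded in \S2 and \S5. Throughout I fix \(i_v\) and a compatible system of coset representatives \(\{w_\sigma\}\) for \(\mathbb{A}_{E,f}^\times/E^\times\) in \(W_{E/\mathbb{Q},f}\) with \(w_{c\sigma}=\tilde c w_\sigma\); by Theorem 5.1 the value of every \(f\)-map occurring in the two diagrams is independent of these choices and of the chosen lift to a Weil group, so it suffices to verify the claimed identities for one such choice.

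For (i), let \(\alpha\in\Gamma_{E^{ab}/\mathbb{Q}}\#\Gamma_{E^{ab}/F}\), which also lies in \(\Gamma_{E^{ab}/\mathbb{Q}}\#\Gamma_{E^{ab}/F'}\), and pick a lift \(\tilde\alpha\in W_{E/\mathbb{Q},f}\#W_{E/F,f}\subset W_{E/\mathbb{Q},f}\#W_{E/F',f}\). By definition \(f_F^E(\alpha)\) and \(f_{F'}^E(\alpha)\) are the images of \((h_{\sigma^{-1}}(j(\tilde\alpha)))_{j\otimes\sigma}\) with \(j\in\Sigma_F\), respectively of \((h_{\sigma^{-1}}(j'(\tilde\alpha)))_{j'\otimes\sigma}\) with \(j'\in\Sigma_{F'}\), under \(T_{F\otimes E}\twoheadrightarrow\mathcal{S}_F^E\) and \(T_{F'\otimes E}\twoheadrightarrow\mathcal{S}_{F'}^E\). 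The key observation is that, because \(\tilde\alpha\) already lies in the smaller group \(W_{E/\mathbb{Q},f}\#W_{E/F,f}\), the element \(j'(\tilde\alpha)=[\tilde u]\tilde\alpha\) (with \(\tilde u\in W_{E/\mathbb{Q},f}\), \(u|_{F'}=j'\)) is unchanged when \(\tilde u\) is replaced by \(\tilde u w\) for \(w\in W_{E/F,f}\); hence \(j'(\tilde\alpha)\) depends only on \(j'|_F\in\Sigma_F\) and equals \((j'|_F)(\tilde\alpha)\). Therefore the \(j'\otimes\sigma\)-component of the tuple for \(F'\) coincides with the \((j'|_F)\otimes\sigma\)-component of the tuple for \(F\), which says precisely that the former tuple is the image of the latter under the diagonal morphism \(T_{F\otimes E}\to T_{F'\otimes E}\) dual to \(\Sigma_{F'}\twoheadrightarrow\Sigma_F\). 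This morphism descends to \(\mathrm{diag}:\mathcal{S}_F^E\to\mathcal{S}_{F'}^E\) and respects the inclusions of \(E\)-points, so passing to quotients yields the commutativity of the first diagram.

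For (ii), write \(F'=u(F)\) with \(u\in\Gamma_{\mathbb{Q}}\), take \(\alpha\in\Gamma_{E^{ab}/\mathbb{Q}}\#\Gamma_{E^{ab}/F}\) with a lift \(\tilde\alpha\), and recall that \([u]\) is induced by \([\tilde u]\) on the plectic Weil group. For \(j'\in\Sigma_{F'}\) choose \(\tilde w\in W_{E/\mathbb{Q},f}\) with \(w|_{F'}=j'\); then the \(j'\otimes\sigma\)-component of \(f_{F'}^E([u]\alpha)\) is \(h_{\sigma^{-1}}(j'([\tilde u]\tilde\alpha))=h_{\sigma^{-1}}([\tilde w][\tilde u]\tilde\alpha)=h_{\sigma^{-1}}([\tilde w\tilde u]\tilde\alpha)\), using that the operations \([\,\cdot\,]\) compose via \([\tilde w]\circ[\tilde u]=[\tilde w\tilde u]\). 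Since \(F'=u(F)\), the condition \(w|_{F'}=j'\) forces \((wu)|_F=j'\circ u|_F=:j\), so \([\tilde w\tilde u]\tilde\alpha=j(\tilde\alpha)\) and the component in question equals \(h_{\sigma^{-1}}(j(\tilde\alpha))\), which is the \(j\otimes\sigma\)-component of \(f_F^E(\alpha)\). Under the bijection \(\Sigma_{F'}\to\Sigma_F\), \(j'\mapsto j=j'\circ u|_F\), this reads \(f_{F'}^E([u]\alpha)=u_\ast(f_F^E(\alpha))\), as required.

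Both arguments are formal once the relevant identities on plectic and Weil groups are in place, so I do not anticipate a serious obstacle; the one point that needs genuine care is the claim in (i) that \(j'(\tilde\alpha)\) depends only on \(j'|_F\) — this is exactly where one uses \(\alpha\in\Gamma_{E^{ab}/\mathbb{Q}}\#\Gamma_{E^{ab}/F}\) rather than merely \(\Gamma_{E^{ab}/\mathbb{Q}}\#\Gamma_{E^{ab}/F'}\) — together with keeping the variance conventions straight (the distinction between \(\sigma\circ j(g)\) and \(\sigma j(g)\) from \S4, and the order in \([\tilde w]\circ[\tilde u]=[\tilde w\tilde u]\)). Both of these follow directly from the definitions in \S2, in particular Propositions 2.3 and 2.4 and Theorem 2.5.
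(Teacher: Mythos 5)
Your proposal is correct and follows essentially the same route as the paper's proof: for (i) the key point in both is that, since \(\tilde{\alpha}\) lies in the smaller plectic Weil group \(W_{E/\mathbb{Q},f}\#W_{E/F,f}\), the element \(j'(\tilde{\alpha})\) depends only on \(j'|_F\) (your justification via \([\tilde u]=[\tilde u w]\) for \(w\in W_{E/F,f}\) is exactly the right one, and in fact cleans up a typo in the paper's phrasing), and for (ii) both arguments reduce to the computation \(h_{\sigma^{-1}}(j'([\tilde u]\tilde\alpha))=h_{\sigma^{-1}}([\tilde w\tilde u]\tilde\alpha)=h_{\sigma^{-1}}(j(\tilde\alpha))\) with \(j=j'\circ u|_F\). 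No gaps.
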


\begin{proposition}  Fix \(F\) and suppose \(E^{\prime} \supset E\) is also CM and Galois over \(\mathbb{Q}\). There is a commutative diagram
\[ \begin{CD} \Aut_F(F \otimes_{\mathbb{Q}} E^{\prime  ab}) @> f_F^{E^{\prime}} >> \mathcal{S}_F^{E^{\prime}}(\mathbb{A}_{E^{\prime} ,f}) /  \mathcal{S}_F^{E^{\prime}}(E^{\prime}) \\
@VVV@V V N_{E^{\prime} / E} V \\
\Aut_F(F \otimes_{\mathbb{Q}} E^{ab}) @> f_F^{E} >>   \mathcal{S}_F^E(\mathbb{A}_{E^{\prime},f}) /  \mathcal{S}_F^E(E^{\prime}) \end{CD} \]
where here we are viewing \(\mathcal{S}_F^E(\mathbb{A}_{{E},f}) /  \mathcal{S}_F^E(E)\) as a subset of \( \mathcal{S}_F^E(\mathbb{A}_{{E^{\prime}},f}) /  \mathcal{S}_F^E(E^{\prime})\)

\begin{proof} In terms of plectic groups, we need to show that we have a commutative diagram  \[ \begin{CD} \Gamma_{E^{\prime ab}/\mathbb{Q}} \# \Gamma_{E^{\prime ab} /F }  @> f_F^{E^{\prime}} >> \mathcal{S}_F^{E^{\prime}}(\mathbb{A}_{{E^{\prime}},f}) /  \mathcal{S}_F^{E^{\prime}}(E^{\prime}) \\
@VVV@V V N_{E^{\prime} / E} V \\
\Gamma_{E^{ab}/\mathbb{Q}} \# \Gamma_{E^{ab}/F }  @> f_F^{E} >>   \mathcal{S}_F^E(\mathbb{A}_{{E^{\prime}},f}) /  \mathcal{S}_F^E(E^{\prime}) \end{CD} \]
where the left hand map is the natural surjection induced by \(\Gamma_{E^{\prime ab} / \mathbb{Q}} \twoheadrightarrow \Gamma_{E^{ab} / \mathbb{Q}} \). 

To this end let \(W_{\mathbb{Q}} \supset  W_{E} \supset W_{E^{\prime}} \) be the relevant Weil groups and let \(W_{E/\mathbb{Q}} = W_{\mathbb{Q}} / [W_E, W_E] \) and \(W_{E^{\prime}/\mathbb{Q}} = W_{\mathbb{Q}} / [W_{E^{\prime}}, W_{E^{\prime}}] \). We set \(G\) to be \(W_{{E^{\prime}}/\mathbb{Q}, f}\),  \( H \) to be \( W_{E^{\prime} /E, f} \) and \(H^{\prime} = W_{E^{\prime} / E^{\prime}, f} = \mathbb{A}_{{E^{\prime}},f}^{\times} / {E^{\prime}}^{\times}\), and can identify \( W_{E/\mathbb{Q}, f} \) with \(G / [H,H]\). 
We are in the setting of Proposition 2.3 and in that notation we have \(X = G /H = \Gamma_{E/ \mathbb{Q}}\) and \( X^{\prime} = G / H^{\prime} = \Gamma_{E^{\prime} / \mathbb{Q}}\) and we set \(Y = H/H^{\prime}\). The transfer map 
\[ \Ver_{E^{\prime}/ E} :  H^{ab} = \mathbb{A}_{E,f}^{\times} / E^{\times} \rightarrow H^{\prime \; ab} = H = \mathbb{A}_{E^{\prime}, f}^{\times} / E^{\prime \; \times}  \] 
associated to the inclusion \(H^{\prime} \subset H\) is just the obvious inclusion \(\mathbb{A}_{E,f}^{\times} / E^{\times} \hookrightarrow \mathbb{A}_{E^{\prime}, f}^{\times} / E^{\prime \; \times} \) (see \cite{Tate}, \S1). 

Our fixed map \(i_v : W_{\mathbb{C} / \mathbb{R}} \rightarrow W_{\mathbb{Q}}\) gives us an element \(\tilde{c} \in W_{E/\mathbb{Q}, f} \) lifting complex conjugation \(c \in  \Gamma_{E^{\prime}/\mathbb{Q}} \) and whose image in \(W_{E/\mathbb{Q}}\) (which we'll denote \(\overline{\tilde{c}}\)) does likewise for \(E\). If we pick coset representatives \(w_{\sigma}\)  for \(H\) in  \(G\) such that for each \(\sigma \in \Gamma_{E / \mathbb{Q}}\) we have \(w_{c \sigma} = \tilde{c}w_{\sigma}\) and any set \(s_y\) for \(y \in Y\) of coset representatives for \(H^{\prime} \subset H\) then

\begin{itemize} 
\item The set \(\{w_{\sigma^{\prime}}\} = \{w_{\sigma}s_y : \sigma \in \Gamma_{E / \mathbb{Q}} , y \in Y\}\) give a set of coset representatives for \(H^{\prime}\) in \(G\) satisfying the relevant criterion for \(\tilde{c}\) (indeed \(w_{c \sigma}s_y = \tilde{c}w_{\sigma}s_y\)). 
\item The the images  \(\overline{w}_{\sigma}\) in \(W_{E/\mathbb{Q}, f} = G / [H,H]\) give a set of coset representatives for \(W_{E/E, f} = \mathbb{A}_{E,f}^{\times} / E^{\times}\) in \(W_{E/\mathbb{Q}, f} \) such that \(\overline{w}_{c \sigma} = \overline{\tilde{c}} \overline{w}_{\sigma}\). 
\end{itemize}

By definition for \( \Gamma_{E^{\prime ab}/\mathbb{Q}} \# \Gamma_{E^{\prime ab} /F }\) we have \(f_F^{E^{\prime}}(\alpha)\) equal to the image of 
\( ( h_{\sigma^{\prime}}(j (\tilde{\alpha})))_{j \otimes {\sigma^{\prime}}^{-1}} \in \prod_{j \otimes \sigma^{\prime}} \mathbb{A}_{E^{\prime},f}^{\times} / E^{\prime \times} \)
under \(T_{F \otimes E} \twoheadrightarrow \mathcal{S}_F^{E^{\prime}}\).  The map \(N_{E^{\prime}/ E}\) sends 
\( [j \otimes \sigma] \mapsto \sum_{\sigma^{\prime}|_E = \sigma} [j \otimes \sigma^{\prime}]\) and hence by Proposition 2.3 (we're using that \(j(\tilde{\alpha}) \in G \# H \subset G \# H^{\prime}\) as \( j(F) \subset E \subset E^{\prime}\)) the \((j \otimes \sigma^{-1})\) component of \(N_{E^{\prime}/ E} \circ f_F^{E^{\prime}}(\alpha)\) is equal to 
\[ \prod_{ \sigma^{\prime}|_E = \sigma} h_{\sigma^{\prime}}( j (\tilde{\alpha}))  = \prod_{ \sigma^{\prime}|_E = \sigma} h_{\sigma^{\prime}}(j (\tilde{\alpha})) =   \prod_{ y \in Y} s_{h_{j, \sigma} y}^{-1} h_{j, \sigma} s_y \in \mathbb{A}_{E^{\prime}, f}^{\times} / E^{\prime \times} \]
where \(h_{j, \sigma} = h_{\sigma}(j (\tilde{\alpha})) \in H = W_{E^{\prime} / E, f } \). But this  right hand side is precisely \( \Ver_{E^{\prime} /E} \overline{h_{j, \sigma}} \), where \(\overline{h_{j, \sigma}}\) is the image of \(h_{j, \sigma}\) in \(H^{ab} = \mathbb{A}_{E, f}^{\times} /E^{\times} \) and we have already remarked that \(\Ver_{E^{\prime} /E}\) is identified with the inclusion \( \mathbb{A}_{E, f}^{\times} /E^{\times} \hookrightarrow \mathbb{A}_{E^{\prime}, f}^{\times} /E^{\prime  \times} \). The result then follows since if we let \(\overline{\alpha} \in  \Gamma_{E^{ab}/\mathbb{Q}} \# \Gamma_{E^{ab} /F }  \) be the reduction of \(\alpha\) then the \((j \otimes \sigma^{-1})\) component of \( f_F^{E}(\overline{\alpha})\) (computed with the coset representatives \(\overline{w}_{\sigma}\)) is given by 
\( \overline{h_{j, \sigma}} = \overline{h_{\sigma}( j(\tilde{\alpha}))} \in \mathbb{A}_{E, f}^{\times} /E^{\times} \).  
\end{proof}

\end{proposition}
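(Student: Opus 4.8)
The plan is to reduce the claimed commutativity to a statement about plectic Weil groups and then recognise the resulting product as the classical transfer (Verlagerung) map. Under the canonical isomorphisms of Theorem 2.5, the left-hand vertical map becomes the natural surjection $\Gamma_{E^{\prime ab}/\mathbb{Q}} \# \Gamma_{E^{\prime ab}/F} \twoheadrightarrow \Gamma_{E^{ab}/\mathbb{Q}} \# \Gamma_{E^{ab}/F}$ induced by $\Gamma_{E^{\prime ab}/\mathbb{Q}} \twoheadrightarrow \Gamma_{E^{ab}/\mathbb{Q}}$, so it suffices to prove the diagram with these plectic groups on the left.

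First I would set up the group-theoretic framework of Proposition 2.3 by taking $G = W_{E'/\mathbb{Q},f}$, $H = W_{E'/E,f}$ and $H' = W_{E'/E',f} = \mathbb{A}_{E',f}^{\times}/E^{\prime \times}$, so that $W_{E/\mathbb{Q},f} = G/[H,H]$; then $X = G/H = \Gamma_{E/\mathbb{Q}}$, $X' = G/H' = \Gamma_{E'/\mathbb{Q}}$ and $Y = H/H'$. The one external input beyond Proposition 2.3 is Tate's identification of the transfer $\Ver_{E'/E} \colon H^{ab} = \mathbb{A}_{E,f}^{\times}/E^{\times} \to H^{\prime ab} = H = \mathbb{A}_{E',f}^{\times}/E^{\prime \times}$ with the obvious inclusion. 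Next I would choose coset representatives adapted to $\tilde{c}$: pick $w_{\sigma}$ for $H$ in $G$ with $w_{c\sigma} = \tilde{c}\, w_{\sigma}$ and any $s_y$ for $H'$ in $H$, so that $\{w_{\sigma} s_y\}$ are coset representatives for $H'$ in $G$ still satisfying the $\tilde{c}$-condition, and their images $\overline{w}_{\sigma}$ in $W_{E/\mathbb{Q},f}$ likewise give $\tilde{c}$-adapted representatives for $W_{E/E,f}$. By Theorem 5.1 both $f_F^{E'}$ and $f_F^{E}$ may be evaluated using these specific choices.

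The heart of the argument is then a direct computation. For $\alpha$ in the large plectic group with reduction $\overline{\alpha}$, and any lift $\tilde{\alpha}$, the element $f_F^{E'}(\alpha)$ is the image of $(h_{\sigma'}(j(\tilde{\alpha})))_{j \otimes \sigma^{\prime -1}}$ under $T_{F\otimes E} \twoheadrightarrow \mathcal{S}_F^{E'}$, while $N_{E'/E}$ sends $[j\otimes\sigma] \mapsto \sum_{\sigma'|_E = \sigma} [j\otimes\sigma']$; hence the $(j\otimes\sigma^{-1})$ component of $N_{E'/E} f_F^{E'}(\alpha)$ is $\prod_{\sigma'|_E = \sigma} h_{\sigma'}(j(\tilde{\alpha}))$. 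Since $j(F) \subset E \subset E'$ we have $j(\tilde{\alpha}) \in G \# H \subset G \# H'$, so Proposition 2.3 applies and rewrites this product as $\prod_{y \in Y} s_{h_{j,\sigma} y}^{-1}\, h_{j,\sigma}\, s_y$ with $h_{j,\sigma} = h_{\sigma}(j(\tilde{\alpha})) \in H$; this is precisely $\Ver_{E'/E}(\overline{h_{j,\sigma}})$, i.e. via Tate's identification the image of $\overline{h_{j,\sigma}} \in \mathbb{A}_{E,f}^{\times}/E^{\times}$ under the inclusion into $\mathbb{A}_{E',f}^{\times}/E^{\prime \times}$. On the other hand, the $(j\otimes\sigma^{-1})$ component of $f_F^{E}(\overline{\alpha})$, computed with the $\overline{w}_{\sigma}$, is exactly $\overline{h_{j,\sigma}}$. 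Matching components gives $N_{E'/E} \circ f_F^{E'} = f_F^{E} \circ (\text{reduction})$, which is the desired commutativity.

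Conceptually there is nothing deep here: the universal Taniyama element is compatible with norms because the Weil-group half-transfer is compatible with the transfer map. The real work — and the place where I expect to spend the most care — is the bookkeeping: tracking the $j\otimes\sigma$ versus $j,\sigma$ indexing conventions, the inverses built into the plectic action, and the precise form of the coset-representative compatibility in Proposition 2.3, so that the product $\prod_{\sigma'|_E = \sigma} h_{\sigma'}(j(\tilde{\alpha}))$ really does match the transfer formula on the nose. It is essential that Theorem 5.1 has already established well-definedness of $f_F^{E}$ and $f_F^{E'}$, since that is what licenses the simultaneous use on both levels of the adapted coset representatives that make the transfer formula visible.
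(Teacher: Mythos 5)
Your proposal is correct and follows essentially the same route as the paper's own proof: the same reduction to plectic Weil groups with $G = W_{E'/\mathbb{Q},f}$, $H = W_{E'/E,f}$, $H' = \mathbb{A}_{E',f}^{\times}/E'^{\times}$, the same adapted coset representatives $\{w_\sigma s_y\}$ and $\overline{w}_\sigma$, and the same identification of $\prod_{\sigma'|_E=\sigma} h_{\sigma'}(j(\tilde\alpha))$ with $\Ver_{E'/E}(\overline{h_{j,\sigma}})$ via Proposition 2.3 and Tate's description of the transfer as the inclusion $\mathbb{A}_{E,f}^{\times}/E^{\times}\hookrightarrow\mathbb{A}_{E',f}^{\times}/E'^{\times}$. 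No substantive differences.
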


\section{A plectic Taniyama group}

The data of Langlands' universal Taniyama element \(f^E : \Gamma_{E^{ab} / \mathbb{Q}} \rightarrow \mathcal{S}^E(\mathbb{A}_{E,f}) / \mathcal{S}^E(E)\) (for \(E \subset \overline{\mathbb{Q}}\) a CM number field which is Galois over \(\mathbb{Q}\)) is equivalent to the data of the level \(E\) Taniyama group
\[ 1 \rightarrow \mathcal{S}^E \rightarrow \mathcal{T}^E \rightarrow \Gamma_{E^{ab} / \mathbb{Q}} \rightarrow 1\]
together with its continuous finite-adelic splitting \(s^E :  \Gamma_{E^{ab} / \mathbb{Q}} \rightarrow \mathcal{T}^E(\mathbb{A}_{\mathbb{Q}, f})\). This is a special case of the following more general formalism: 

\begin{proposition} Let \(S\) be any torus split over a Galois extension \(E/\mathbb{Q}\) and \(\Gamma\) a profinite group. Suppose we are given an `algebraic' action \(\Gamma \rightarrow \Aut(S)\) which factors through a finite quotient of \(\Gamma\) and an extension
\[1 \rightarrow S \rightarrow T \rightarrow \Gamma \rightarrow 1 \]
inducing the given algebraic action of \(\Gamma\) on \(S\) together with a continuous finite-adelic splitting \(s : \Gamma \rightarrow T(\mathbb{A}_{\mathbb{Q},f})\). Pick a section \(a : \Gamma \rightarrow T_{/E}\) that is a morphism of pro-algebraic varieties (this exists as \({S}\) is split over \(E\)), and define \[ {b} (\gamma) =  s(\gamma)a(\gamma)^{-1}  \in S(\mathbb{A}_{E,f} )\]
 for \(\gamma\) in \(\Gamma \). The  induced map 
 \[ \overline{b} : \Gamma \rightarrow S(\mathbb{A}_{E,f} ) /{S}(E)\] i.e. \( \gamma \mapsto s(\gamma) a(\gamma)^{-1} \text{ mod } {S}(E)\), is independent of the choice of \(a\) and satisfies the following three properties:
\begin{enumerate}
\item \( \overline{b} \) is a cocycle for the algebraic action of \( \Gamma \) on \(S(\mathbb{A}_{E,f} ) / S(E)\), i.e. for all \(\gamma_1, \gamma_2 \in \Gamma\) we have
\[ \overline{b}(\gamma_1 \gamma_2) =  \overline{b}(\gamma_1) \cdot  {\gamma_1} \star \overline{b}(\gamma_2) \]  
{where \(\star\) denotes the given algebraic action}
\item  \(\overline{b}\) lands in the Galois invariant part \( \left[ S(\mathbb{A}_{E,f} ) / S(E) \right] ^{\Gal(E/\mathbb{Q})} \subset S(\mathbb{A}_{E,f} ) / S(E) \) 
\item  \(\overline{b}\) admits a continuous lift to a map \(b : \Gamma \rightarrow S(\mathbb{A}_{E,f} )\) such that
\[ (\gamma_1, \gamma_2) \mapsto  d_{\gamma_1, \gamma_2} := b(\gamma_1)\cdot {\gamma_1}\star b(\gamma_2) \cdot b(\gamma_{1}\gamma_{2})^{-1} \in S(E) \subset S(\mathbb{A}_{f,E})  \]
 is locally constant
\end{enumerate}
Conversely any map  \( \overline{b} : \Gamma  \rightarrow {S}(\mathbb{A}_{E,f} ) /{S}(E)\)  satisfying (1), (2) and (3) arises from a unique extension of \(\Gamma\) by \(\mathcal{S}\) (together with a continuous finite-adelic splitting). 

\begin{proof} This is proven in \cite{Milne-Shih} Proposition 2.7 for \(\Gamma = \Gamma_{E^{ab} / \mathbb{Q}}\) acting on \(S= \mathcal{S}^E\) by the algebraic action defined in \S4. However, as noticed by Fargues \cite{Fargues} and Nekov\'a\v{r} \cite{Nekovar}, the argument is not specific to that case. 
\end{proof}
\end{proposition}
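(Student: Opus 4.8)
The plan is to establish the two directions of the correspondence separately, following the blueprint of \cite{Milne-Shih} Proposition 2.7 but keeping the argument generic in $S$ and $\Gamma$. Going from an extension to $\overline b$: one first notes that a section $a\colon\Gamma\to T_{/E}$ of pro-algebraic varieties exists because $S$ is split over $E$ — at each finite level $T$ is an affine group scheme of finite type over $\mathbb{Q}$ and $\Gamma$ is a constant finite scheme, so one may choose an $E$-point of $T$ above each element of the relevant finite quotient of $\Gamma$, compatibly in the limit. Given such an $a$, both $s(\gamma)$ and $a(\gamma)$ map to $\gamma\in\Gamma$, so $b(\gamma)=s(\gamma)a(\gamma)^{-1}$ lies in $S(\mathbb{A}_{E,f})$ (using $T(\mathbb{A}_{\mathbb{Q},f}),T(E)\subset T(\mathbb{A}_{E,f})$), and $b$ is continuous since $s$ is continuous and $a$ is locally constant. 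If $a'$ is a second section then $a'(\gamma)a(\gamma)^{-1}\in S(E)$, so $b$ changes only by an $S(E)$-valued function and $\overline b$ is well defined.

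The verification of (1)--(3) rests on the single identity
\[ b(\gamma_1)\cdot\gamma_1\star b(\gamma_2)\cdot b(\gamma_1\gamma_2)^{-1}=(\gamma_1\gamma_2)\star\bigl(a(\gamma_2)^{-1}a(\gamma_1)^{-1}a(\gamma_1\gamma_2)\bigr), \]
which I would derive by expanding $b=s\,a^{-1}$, using that $s$ is a group homomorphism and that conjugation by any lift of $\gamma$ (in particular by $a(\gamma)$ or by $s(\gamma)$) induces the given algebraic action $\gamma\star(-)$ on the abelian group $S$. The right-hand side lies in $S(E)$, since $a(\gamma_2)^{-1}a(\gamma_1)^{-1}a(\gamma_1\gamma_2)$ maps to the identity of $\Gamma$ and $S(E)$ is stable under the $\mathbb{Q}$-rational algebraic action; reducing modulo $S(E)$ gives the cocycle relation (1), and taking the displayed element as the defect $d_{\gamma_1,\gamma_2}$ — locally constant because $a$ is, while $b$ is continuous — gives (3). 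For (2) I would apply $\tau\in\Gal(E/\mathbb{Q})$: the value $s(\gamma)\in T(\mathbb{A}_{\mathbb{Q},f})$ is $\tau$-fixed, while $\tau(a(\gamma))$ and $a(\gamma)$ both lift $\gamma$ and so differ by an element of $S(E)$, whence $\tau(b(\gamma))\equiv b(\gamma)\bmod S(E)$.

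For the converse, starting from $\overline b$ satisfying (1)--(3), I would use (3) to fix a continuous lift $b\colon\Gamma\to S(\mathbb{A}_{E,f})$ and set $d_{\gamma_1,\gamma_2}=b(\gamma_1)\,\gamma_1\star b(\gamma_2)\,b(\gamma_1\gamma_2)^{-1}$; by (1) this is $S(E)$-valued, by (3) locally constant, and a routine associativity check shows it is a $2$-cocycle of $\Gamma$ valued in $S(E)$ for the algebraic action. This cocycle defines an extension $T_{/E}$ of $\Gamma$ by $S_{/E}$ (namely $S_{/E}\times\Gamma$ with $d$-twisted multiplication), and $\gamma\mapsto b(\gamma)\cdot\gamma$ is, by the very definition of $d$, a homomorphic splitting over $\mathbb{A}_{\mathbb{Q},f}$ after base change. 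It remains to descend $T_{/E}$ to a group scheme over $\mathbb{Q}$: property (2) is exactly what provides, for each $\tau\in\Gal(E/\mathbb{Q})$, an isomorphism between $T_{/E}$ and its $\tau$-conjugate fixing $S$, and these assemble into an effective descent datum (the requisite cocycle condition being forced by $S$ abelian together with the fact that the algebraic action factors through a finite quotient). Changing $b$ by an $S(E)$-valued function changes $d$ by a coboundary and the descent datum correspondingly, which yields the asserted uniqueness of $(T,s)$.

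The main obstacle is this last descent step: one must check carefully that condition (2) really does produce a well-behaved Galois descent datum on the pro-algebraic group $T_{/E}$, compatible with the adelic splitting, and that the continuity and local-constancy hypotheses all survive the passage to the projective limit. Everything else is essentially bookkeeping — and, as the authors remark, once one observes that the proof of \cite{Milne-Shih} Proposition 2.7 nowhere exploits the specific shape of $\Gamma_{E^{ab}/\mathbb{Q}}$ or of $\mathcal{S}^E$, it transfers to the present generality without change.
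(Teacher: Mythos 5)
Your proposal is correct and is essentially the argument the paper invokes: the paper's proof is just a citation to \cite{Milne-Shih}, Proposition 2.7, together with the remark that the argument is not specific to $\Gamma_{E^{ab}/\mathbb{Q}}$ and $\mathcal{S}^E$, and what you have written out (the identity $b(\gamma_1)\cdot\gamma_1\star b(\gamma_2)\cdot b(\gamma_1\gamma_2)^{-1}=(\gamma_1\gamma_2)\star\bigl(a(\gamma_2)^{-1}a(\gamma_1)^{-1}a(\gamma_1\gamma_2)\bigr)$, the Galois-invariance via $\tau$-fixedness of $s(\gamma)$, and the converse via a locally constant $S(E)$-valued $2$-cocycle plus Galois descent) is precisely that argument carried out in the required generality.
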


In the above setting, if \(S^{\prime}\) is another torus split by \(E\) and \(S \rightarrow S^{\prime}\) is any homomorphism then the extension defined by the composite map 
\( \Gamma \xrightarrow{\overline{b}} {S}(\mathbb{A}_{E,f} ) /{S}(E)  \xrightarrow{} {S^{\prime}}(\mathbb{A}_{E,f} ) /{S^{\prime}}(E) \)
is just the push forward of \(T\) via \(S \rightarrow S^{\prime}\).  Similarly the pull back of \(T\) via a homomorphism  \(\Gamma^{\prime} \rightarrow \Gamma\)  corresponds to the composite 
\( \Gamma^{\prime} \rightarrow \Gamma \xrightarrow{\overline{b}} {S}(\mathbb{A}_{E,f} ) /{S}(E)  \) (see \cite{Fargues}, \S9).

\begin{example} Let \(E \subset \overline{\mathbb{Q}}\) be a CM field which is Galois over \(\mathbb{Q}\). Under Proposition 6.1, Langlands' level \(E\) Taniyama extension 
\[ 1 \rightarrow \mathcal{S}^E \rightarrow \mathcal{T}^E \rightarrow \Gamma_{E^{ab} / \mathbb{Q}} \rightarrow 1\]
corresponds to the map \(\overline{b}^E : g \mapsto f^E(g^{-1})^{-1}\), which can be checked to satisfy the properties (1), (2) and (3). Previously, Serre \cite{Serre} had constructed an extension
\[ 1 \rightarrow \mathcal{S}^E \rightarrow \epsilon^E \rightarrow \Gamma_{E}^{ab} \rightarrow 1\] 
together with a continuous finite-adelic splitting 
which was related to the theory of algebraic Hecke characters. Under Proposition 6.1 this corresponds to the homomorphism \[N^E:  \Gamma_{E}^{ab} \xrightarrow{art_E^{-1}} \mathbb{A}_{E,f}^{\times} / \overline{E^{\times}} \xrightarrow{\mu^E} \mathcal{S}^E(\mathbb{A}_{E, f}) / \overline{\mathcal{S}^E(E)}\xrightarrow{N_{E/\mathbb{Q}}} \mathcal{S}^E(\mathbb{A}_{\mathbb{Q}, f}) / \mathcal{S}^E({\mathbb{Q}}) \] 
One can check from the definition that \(f^E|_{\Gamma_E^{ab}} = N^E\), i.e. the Serre extension is the pull back of the Taniyama extension via \(\Gamma_{E}^{ab}  \hookrightarrow \Gamma_{E^{ab} /\mathbb{Q}}\). Similarly, if \(F \subset E\) is totally real and \({g} = (g_j)_{j \in \Sigma_F} \)  lies in the subgroup \(\Gamma_{E^{ab}/E}^{\Sigma_F}  \subset \Aut_F(F\otimes_{\mathbb{Q}} E^{ab})\) then \( \alpha_{{g}} : \Gamma_{E^{ab}/\mathbb{Q}} \rightarrow \Gamma_{E^{ab}/\mathbb{Q}} \) is the bijection given by left multiplication by \(g_j\) on \(\{ g \in \Gamma_{E^{ab}/\mathbb{Q}} : g|_F = j \}\). If we let \(N_F^E\) denote the componentwise map 
\[ N_F^E = (N^E, \dots, N^E):  \Gamma_{E^{ab}/E}^{\Sigma_F} \rightarrow \mathcal{S}_F^E(\mathbb{A}_{\mathbb{Q}, f}) / \mathcal{S}_F^E(\mathbb{Q})\]
one can check straight from the definition that if \(g \in \Gamma_{E^{ab}/E}^{\Sigma_F}\) then \(f_F^E(g) = N_F^E(g)\). 
\end{example}

From now on we place ourselves in the setting of \S5.  Fix a total real field \(F \subset \overline{\mathbb{Q}}\) and let \(E \subset \overline{\mathbb{Q}}\) be a CM field which is Galois over \(\mathbb{Q}\) and contains \(F\). In \S4 we defined an algebraic action of \( \Aut_F(F\otimes_{\mathbb{Q}} E^{ab}) \) on \(\mathcal{S}_F^E\) via the quotient \(\Aut_F(F\otimes_{\mathbb{Q}} E^{ab}) \twoheadrightarrow \Aut_F(F\otimes_{\mathbb{Q}} E)\) and in \S5 we constructed an \(F\)-plectic analogue 
\[ f_F^E : \Aut_F(F\otimes_{\mathbb{Q}} E^{ab})  \rightarrow \mathcal{S}_F^E(\mathbb{A}_{E,f})/S_F^E(E) \]
of Langlands' universal Taniyama element. 
The main theorem of this section asserts that this map defines an \(F\)-plectic analogue of the level \(E\) Taniyama extension:

\begin{theorem} There exists a unique extension \[ 1 \rightarrow \mathcal{S}_F^E \rightarrow \mathcal{T}_F^E \rightarrow  \Aut_F(F\otimes E^{ab})  \rightarrow 1\]
together with a continuous finite-adelic splitting \(s^E_F :  \Aut_F(F\otimes E^{ab}) \rightarrow \mathcal{T}_F^E(\mathbb{A}_{\mathbb{Q}, f}) \) giving rise to the map \(\overline{b}_F^E : \Aut_F(F\otimes_{\mathbb{Q}} E^{ab})  \rightarrow \mathcal{S}_F^E(\mathbb{A}_{E,f})/S_F^E(E) \) defined by \(g \mapsto f_F^E(g^{-1})^{-1}\). \end{theorem}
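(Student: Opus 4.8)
The plan is to reduce at once to Proposition 6.1, applied with \(\Gamma = \Aut_F(F\otimes_{\mathbb{Q}} E^{ab})\) (a profinite group, being isomorphic to \(S_{\Sigma_F}\ltimes\Gamma_{E^{ab}/F}^{\Sigma_F}\)), with \(S = \mathcal{S}_F^E\) (a torus split over \(E\)), and with the algebraic action of \(\Gamma\) on \(S\) constructed in \S4, which factors through the finite quotient \(\Aut_F(F\otimes_{\mathbb{Q}} E)\). Theorem 5.1 tells us that \(\overline{b}_F^E\colon g\mapsto f_F^E(g^{-1})^{-1}\) is a well-defined map \(\Aut_F(F\otimes_{\mathbb{Q}} E^{ab})\to\mathcal{S}_F^E(\mathbb{A}_{E,f})/\mathcal{S}_F^E(E)\); by Proposition 6.1 it then suffices to check that \(\overline{b}_F^E\) satisfies properties (1), (2) and (3) there, after which the extension, its continuous splitting, and the uniqueness are automatic.

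Property (2) is immediate: Proposition 5.3 shows \(f_F^E\) lands in \(\left[\mathcal{S}_F^E(\mathbb{A}_{E,f})/\mathcal{S}_F^E(E)\right]^{\Gal(E/\mathbb{Q})}\), a subgroup stable under inversion, so \(\overline{b}_F^E\) does too. The heart of the matter is the cocycle condition (1). I would deduce it from the \emph{twisted-homomorphism identity}
\[ f_F^E(h_1 h_2) = \big(h_2^{-1}\star f_F^E(h_1)\big)\cdot f_F^E(h_2) \qquad (h_1,h_2\in\Aut_F(F\otimes_{\mathbb{Q}} E^{ab})) \]
by substituting \(h_1 = g_2^{-1}\), \(h_2 = g_1^{-1}\), taking inverses, and using that \(\star\) commutes with inversion (being given by group automorphisms); this turns \((\star\star)\) into \(\overline{b}_F^E(g_1 g_2) = \overline{b}_F^E(g_1)\cdot g_1\star\overline{b}_F^E(g_2)\), with the action coming out as \(g_1\star\) exactly as required.

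To prove the displayed identity I would lift \(\alpha_{h_1},\alpha_{h_2}\) to \(\tilde\alpha_1,\tilde\alpha_2\in W_{E/\mathbb{Q},f}\# W_{E/F,f}\), so that \(\tilde\alpha_1\tilde\alpha_2\) lifts \(\alpha_{h_1 h_2}\); since the isomorphism of Theorem 2.5 and the maps \(j\) (assembled from the homomorphisms \([u]\) of Proposition 2.4 and the inclusions of Proposition 2.3) are all group homomorphisms, \(j(\tilde\alpha_1\tilde\alpha_2) = j(\tilde\alpha_1)j(\tilde\alpha_2)\) in \(W_{E/\mathbb{Q},f}\#\mathbb{A}_{E,f}^{\times}/E^{\times}\). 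The computational engine is the chain rule \(h_\sigma(\beta\gamma) = h_{\gamma\sigma}(\beta)\cdot h_\sigma(\gamma)\), valid for \(\beta,\gamma\in W_{E/\mathbb{Q},f}\#\mathbb{A}_{E,f}^{\times}/E^{\times}\) by the definitions and the right \(\mathbb{A}_{E,f}^{\times}/E^{\times}\)-equivariance of \(\beta\). Applying it to the \(j\otimes\sigma\) component \(h_{\sigma^{-1}}(j(\tilde\alpha_1)j(\tilde\alpha_2))\) of \(h_F^E(\tilde\alpha_1\tilde\alpha_2)\), the factor \(h_{\sigma^{-1}}(j(\tilde\alpha_2))\) reassembles to \(h_F^E(\tilde\alpha_2)\), while the permuted factor \(h_{j(\tilde\alpha_2)\sigma^{-1}}(j(\tilde\alpha_1))\) reassembles — after reduction modulo \(T_{F\otimes E}(E)\) — to \(h_2^{-1}\star h_F^E(\tilde\alpha_1)\); this identification is precisely the content of the definition in \S4, where \(g\star\) was built from the character rule \([j\otimes\sigma]\mapsto[j\otimes\sigma j(g)]\), i.e. from exactly this permutation of the \(\mathbb{A}_{E,f}^{\times}/E^{\times}\)-factors. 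Pushing the resulting equality down the \(\Aut_F(F\otimes_{\mathbb{Q}}E)\)-equivariant surjection \(T_{F\otimes E}\twoheadrightarrow\mathcal{S}_F^E\) yields \((\star\star)\). I expect this bookkeeping — matching the permutation produced by the chain rule with the twist built into the \S4 action, and tracking the \(g\mapsto g^{-1}\) carefully so the action lands as \(g_1\star\) and not \(g_1^{-1}\star\) — to be the only real obstacle; it is the plectic analogue of the calculation for Langlands' universal Taniyama element in \cite{Milne-Shih}, the extra \(\Sigma_F\)-indexing and the wreath-product structure merely inflating the notation.

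Finally, for property (3) I would note that \(\mathcal{S}_F^E(E)\) is discrete in \(\mathcal{S}_F^E(\mathbb{A}_{E,f})\) by Chevalley's theorem (as recalled in \S5), so the projection \(\mathcal{S}_F^E(\mathbb{A}_{E,f})\twoheadrightarrow\mathcal{S}_F^E(\mathbb{A}_{E,f})/\mathcal{S}_F^E(E)\) admits continuous local sections; since \(\Aut_F(F\otimes_{\mathbb{Q}}E^{ab})\) is profinite and \(\overline{b}_F^E\) is continuous (by \S5), we may cover it by finitely many clopen sets over which \(\overline{b}_F^E\) lifts continuously and glue these to a global continuous lift \(b\colon\Aut_F(F\otimes_{\mathbb{Q}}E^{ab})\to\mathcal{S}_F^E(\mathbb{A}_{E,f})\). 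For any such \(b\) the coboundary \(d_{g_1,g_2} = b(g_1)\cdot g_1\star b(g_2)\cdot b(g_1 g_2)^{-1}\) takes values in \(\mathcal{S}_F^E(E)\) because \(\overline{b}_F^E\) is a cocycle, and \(d\) is continuous; with discrete target and profinite source it is locally constant. This verifies (3), and Proposition 6.1 then supplies the unique extension and splitting, completing the proof.
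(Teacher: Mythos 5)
Your treatment of conditions (1) and (2) of Proposition 6.1 is exactly the paper's. The twisted-homomorphism identity \(f_F^E(h_1h_2) = (h_2^{-1}\star f_F^E(h_1))\cdot f_F^E(h_2)\) is the paper's Proposition 6.4, proved by the same chain rule \(h_\sigma(\beta\gamma) = h_{\gamma\sigma}(\beta)\,h_\sigma(\gamma)\) and the same matching of the resulting permutation of components with the \S4 action; your substitution \(h_1=g_2^{-1}\), \(h_2=g_1^{-1}\) followed by inversion is the correct way to convert it into the cocycle condition for \(\overline{b}_F^E\). Condition (2) is Proposition 5.3, as you say.

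Your argument for condition (3), however, has a genuine gap. It rests on the claim that \(\mathcal{S}_F^E(E)\) is discrete in \(\mathcal{S}_F^E(\mathbb{A}_{E,f})\), but that is not what Chevalley's theorem gives, and it is false in general. What \S5 recalls is that \(\mathcal{S}^E(\mathbb{Q})\) is discrete in \(\mathcal{S}^E(\mathbb{A}_{\mathbb{Q},f})\) --- a statement special to the \emph{rational} points of the Serre group, whose characters satisfy the CM condition. Over \(E\) the torus \(\mathcal{S}^E_E\) is split, so \(\mathcal{S}^E(E)\) is a product of copies of \(E^{\times}\), and \(E^{\times}\) is neither discrete nor even closed in \(\mathbb{A}_{E,f}^{\times}\) once \(\mathcal{O}_E^{\times}\) is infinite (every neighbourhood of \(1\) meets infinitely many global units; this is why Example 6.2 writes \(\overline{E^{\times}}\) and \(\overline{\mathcal{S}^E(E)}\)). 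Consequently neither your construction of a continuous lift by clopen patching of local sections, nor your final step ``continuous into a discrete target, hence locally constant,'' goes through; the local constancy of \(d_{g_1,g_2}\) is a genuine extra condition on the lift --- which is precisely why it appears as hypothesis (3) in Proposition 6.1 rather than being automatic. The paper's Proposition 6.5 instead builds a \emph{specific} lift: by Milne--Shih there is a finite abelian extension \(E'/E\) and a continuous homomorphism \(b:\Gamma_{E^{ab}/E'}\to\mathcal{S}^E(\mathbb{A}_{\mathbb{Q},f})\) lifting \(N^E\) (which agrees componentwise with \(f_F^E\) on this subgroup by Example 6.2); one takes \((b,\dots,b)\) on the open subgroup \(\Gamma_{E^{ab}/E'}^{\Sigma_F}\) and extends to all of \(\Aut_F(F\otimes E^{ab})\) by coset representatives, and \(d\) is then locally constant because the lift is a homomorphism on an open subgroup that acts trivially. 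You would need to import this (or an equivalent) argument to close the gap.
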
 


\begin{proof}
In order to prove Theorem 6.2 we need to establish that \(\overline{b}_F^E\) satisfies properties (1), (2) and (3) of Proposition 6.1. The following proposition handles property (1): 
\begin{proposition}  For \(g_1, g_2 \in \Aut_F(F\otimes_{\mathbb{Q}} E^{ab}) \) we have 
\[ f_F^E(g_1 g_2) = \;   g_2^{-1} \star f_F^E(g_1) \cdot f_F^E(g_2) \]  
\begin{proof} Let \(\alpha_1\) and \(\alpha_2\) be the images of \(g_1\) and \(g_2\) under the canonical isomorphism \(\Aut_F(F\otimes_{\mathbb{Q}} E^{ab} ) \xrightarrow{\sim} \Gamma_{E^{ab}/\mathbb{Q} } \# \Gamma_{E^{ab}/F } \). The algebraic action of any \(\alpha \in \Gamma_{E^{ab}/\mathbb{Q} } \# \Gamma_{E^{ab}/F }\) on the product 
\(\prod_{ j \otimes \sigma }  \mathbb{A}_{E,f}^{\times} / E^{\times} \)
maps \( (e_{j \otimes \sigma})_{j\otimes\sigma} \mapsto (e_{j,  \sigma j(\alpha)})_{j \otimes \sigma}\) where \(\sigma j(\alpha) = (j(\alpha)^{-1} (\sigma^{-1}))^{-1}\). We deduce that \( \alpha_2^{-1} \star f_F^E(\alpha_1) \) is the image of 
\( ( w_{j(\alpha_1) j(\alpha_2) \sigma^{-1}}^{-1} \cdot j(\tilde{\alpha}_1)(w_{j(\alpha_2)\sigma^{-1}}))_{j \otimes \sigma} \)
under \(T_{F \otimes E}  \twoheadrightarrow \mathcal{S}_F^E\). Hence \( {\alpha_2^{-1}} \star f_F^E(\alpha_1)  \cdot f_F^E(\alpha_2) \) is the image of 
\[   ( w_{j(\alpha_1) j(\alpha_2) \sigma^{-1}}^{-1} \cdot j(\tilde{\alpha}_1)(w_{j(\alpha_2)\sigma^{-1}}) \cdot w^{-1}_{j(\alpha_2) \sigma^{-1}}  \cdot j(\tilde{\alpha}_2)(w_{\sigma^{-1}}))_{j \otimes \sigma}\] 
under \(T_{F \otimes E}  \twoheadrightarrow \mathcal{S}_F^E\). But equally we can write
\( j(\tilde{\alpha}_1 \tilde{\alpha}_2)(w_{\sigma^{-1}}) \) as \( j(\tilde{\alpha}_1) (w_{j(\alpha_2) \sigma^{-1}} \cdot w_{j(\alpha_2) \sigma^{-1}}^{-1}  \cdot j(\tilde{\alpha}_2)(w_{\sigma^{-1}}) ) \) which equals  \( j(\tilde{\alpha}_1)( w_{j(\alpha_2) \sigma^{-1}}) \cdot w_{j(\alpha_2) \sigma^{-1}}^{-1} \cdot j(\tilde{\alpha}_2)(w_{\sigma^{-1}}) \) as \(w_{j(\alpha_2) \sigma^{-1}}^{-1} j(\tilde{\alpha}_2)(w_{\sigma^{-1}}) \in  \mathbb{A}_{E,f}^{\times}/ E^{\times}\). Hence \(f_F^E(\alpha_1 \alpha_2)\) is also the image of  
\[  (w^{-1}_{j(\alpha_1 \alpha_2) \sigma^{-1}} \cdot j(\tilde{\alpha}_1)( w_{j(\alpha_2) \sigma^{-1}}) \cdot w_{j(\alpha_2) \sigma^{-1}}^{-1} \cdot j(\tilde{\alpha}_2)(w_{\sigma^{-1}}))_{j \otimes \sigma} \] 
under \(T_{F \otimes E}  \twoheadrightarrow \mathcal{S}_F^E\) and we conclude that \(f_F^E(\alpha_1 \alpha_2) = \;   \alpha_2^{-1} \star f_F^E(\alpha_1) \cdot f_F^E(\alpha_2) \). \end{proof}
\end{proposition}

It follows immediately from the above proposition that \(\overline{b}_F^E\) satisfies property (1) of Proposition 6.1. On the other hand property (2) follows immediately from Proposition 5.3. Therefore, it only remains to prove:

\begin{proposition} There is a continuous map \(b_F^E :  \Aut_F(F\otimes E^{ab}) \rightarrow  \mathcal{S}_F^E(\mathbb{A}_{{E},f})  \) lifting \(\overline{b}_F^E :   \Aut_F(F\otimes E^{ab}) \rightarrow  \mathcal{S}_F^E(\mathbb{A}_{E,f}) /  \mathcal{S}_F^E(E  )\) such that the map 
\[ (g_1, g_2) \mapsto  d_{g_1, g_2} := b_F^E(g_1) \cdot {g_1} \star b_F^E(g_2) \cdot b_F^E(g_{1}g_{2})^{-1} \in \mathcal{S}_F^E(E) \subset \mathcal{S}_F^E(\mathbb{A}_{E,f}) \] is locally constant. 
\begin{proof} In \cite{Milne-Shih}, it is shown that there is a finite abelian extension \(E^{\prime} / E\) and a continuous homomorphism \(b : \Gamma_{E^{ab}/E^{\prime}} \rightarrow \mathcal{S}^E(\mathbb{A}_{\mathbb{Q},f})  \) fitting in a diagram 
\[ \begin{CD} 1 @>>> \Gamma_{E^{ab}/E^{\prime}} @>>> \Gamma_{E^{ab}/E}@>>> \Gamma_{E^{\prime}/E} @>>> 1 \\
&&  @V b VV @V N^E VV && && \\ 
&&   \mathcal{S}^E(\mathbb{A}_{\mathbb{Q},f})  @>>> \mathcal{S}^E(\mathbb{A}_{\mathbb{Q},f}) /\mathcal{S}^E(\mathbb{Q})  && && \end{CD} \]
The map \((b, \dots, b) : \Gamma_{E^{ab}/E^{\prime}}^{\Sigma_F} \rightarrow \mathcal{S}_F^E(\mathbb{A}_{\mathbb{Q},f}) \subset \mathcal{S}_F^E(\mathbb{A}_{E,f})  \) can be extended to a continuous map \(b_F^E :\Aut_F(F\otimes E^{ab}) \rightarrow  \mathcal{S}_F^E(\mathbb{A}_{E,f})  \) lifting \(\overline{b}_F^E\) by, for example, picking coset representatives (see \cite{Milne-Shih},  \S2.10). This \(b_F^E\) satisfies the condition of the proposition as when restricted to the open subgroup \(\Gamma_{E^{ab}/E^{\prime}}^{\Sigma_F}\) (which acts trivially) it is a homomorphism. 
\end{proof}
\end{proposition}

This completes the proof of Theorem 6.2. \end{proof}

Finally we can construct the full \(F\)-plectic Taniyama group.  It follows from Proposition 5.6 and Proposition 6.1 that whenever \(E \subset E^{\prime}\) we have a transition map \(\mathcal{T}_F^{E^{\prime}} \rightarrow \mathcal{T}_F^E\) which sits in a diagram 
\[  \begin{CD}1 @>>> \mathcal{S}_F^{E^{\prime}} @>>> \mathcal{T}_F^{E^{\prime}} @>>> \Aut_F(F\otimes E^{\prime ab}) @>>> 1 \\
&& @V N_{E^{\prime}/ E} VV @VVV @VVV  &&\\
1 @>>> \mathcal{S}_F^{E} @>>> \mathcal{T}_F^{E} @>>> \Aut_F(F\otimes E^{ab}) @>>> 1 \end{CD} \]
and respects the finite-adelic splittings. We can pull back each extension \(\mathcal{T}_F^E\) by \(\Aut_F(F\otimes_{\mathbb{Q}} \overline{\mathbb{Q}}) \twoheadrightarrow \Aut_F(F\otimes_{\mathbb{Q}} E^{ab})  \), and in the limit we obtain an extension of the form 
\[ 1 \rightarrow \mathcal{S}_F \rightarrow \mathcal{T}_F \rightarrow \Aut_F(F\otimes_{\mathbb{Q}} \overline{\mathbb{Q}}) \rightarrow 1\]
together with a continuous finite-adelic splitting \(s_F : \Aut_F(F\otimes_{\mathbb{Q}} \overline{\mathbb{Q}}) \rightarrow  \mathcal{T}_F(\mathbb{A}_{\mathbb{Q}, f}) \), where here  \(\mathcal{S}_F\) denotes the \(F\)-plectic Serre group \(\mathcal{S}^{\Sigma_F}\). When \(F = \mathbb{Q}\) this is precisely the definition of Langlands' Taniyama group \(\mathcal{T}\). 
Moreover, Proposition 5.5 ensures that the construction is functorial in \(F\). A map \(F \rightarrow F^{\prime}\) of totally real fields induces a homomorphism \(\mathcal{T}_{F} \rightarrow \mathcal{T}_{F^{\prime}}\) restricting to the natural map on Serre groups and inducing the inclusion \(\Aut_F(F\otimes \overline{\mathbb{Q}} ) \hookrightarrow \Aut_{F^{\prime}}(F^{\prime} \otimes \overline{\mathbb{Q}} )  \). In particular, there is a canonical homomorphism from Langlands' Taniyama group  \(\mathcal{T}\) to the the \(F\)-plectic Taniyama group \(\mathcal{T}_F\) for any totally real number field \(F\). \\
\; \\
\textsl{Acknowledgements}: This paper is the result of research begun while studying for a PhD at the University of Cambridge. I am grateful to my supervisor Tony Scholl for his excellent guidance throughout.  I would also particularly like to thank Jan Nekov\'a\v{r} for explaining to me his paper \cite{Nekovar} and for helpful discussions about this work. 
During my PhD I was supported by the Engineering and Physical Sciences Research Council and this paper was completed while I was a Heilbronn Research Fellow at King's College, London. \\

\end{document}